\newtheorem{thm}{Theorem}[section]
\newtheorem{lem}[thm]{Lemma}
\newtheorem{prop}[thm]{Proposition}
\newtheorem{cor}[thm]{Corollary}
\newcommand{\RR}{\mathbb{R}}      
\newcommand{\C}{\mathbb{C}}
\numberwithin{equation}{section}
\begin{document}

\nocite{*}

\title{Error Bounds for the Krylov Subspace Methods for Computations of Matrix Exponentials}

\author{Edward R. Scheinerman\thanks{Grant support listed here.} \\
Department of Applied Mathematics and Statistics \\
The Johns Hopkins University \\
Baltimore, Maryland 21218 USA}
\author
{
  Hao Wang
  \thanks
  {
    Department of Biomedical Engineering, University of Kentucky, Lexington, KY 40506, USA.
    E-mail: {\tt hao.wang@uky.edu}.
    Research supported in part by NSF under Grant DMS-1318633.
  }
  \and
  Qiang Ye
  \thanks
  {
    Department of Mathematics, University of Kentucky, Lexington, KY 40506, USA.
    E-mail: {\tt qye3@uky.edu}.
    Research supported in part by NSF under Grant DMS-1317424 and DMS-1318633.
  }
}
 \date{} 

\maketitle

\begin{abstract}
In this paper, we present new {\em a posteriori} and {\em a priori} error bounds for the Krylov subspace methods for computing $e^{-\tau A}v$ for a given $\tau>0$ and $v\in\C^n$, where $A$ is a large sparse non-Hermitian matrix. The {\em a priori} error bounds relate the convergence to $\lambda_{\min}\left(\frac{A+A^*}{2}\right)$, $\lambda_{\max}\left(\frac{A+A^*}{2}\right)$ (the smallest and the largest eigenvalue of the Hermitian part of $A$) and $|\lambda_{\max}\left(\frac{A-A^*}{2}\right)|$ (the largest eigenvalue in absolute value of the skew-Hermitian part of $A$), which  define a rectangular  region enclosing the field of values of $A$. In particular, our bounds explain an  observed superlinear convergence behavior where the error may first stagnate for certain iterations before it starts to converge. The special case that $A$ is skew-Hermitian is also considered. Numerical examples are given to demonstrate the theoretical bounds.
\end{abstract}



\section{Introduction}

The problem of computing matrix exponentials arises in many theoretical and practical problems. Numerous methods have been developed to efficiently compute $e^{-A}$ or its product with a vector $e^{-A}v$, where $A$ is an $n\times n$ complex matrix and $v\in\C^n$. We refer to the classical paper \cite{nineteen} of Moler and Van Loan for a survey of a general theory and numerical methods for matrix exponentials. For matrix exponential problems involving a large and sparse matrix $A$, it is usually the product of the exponential with a vector that is of interest. This arises, for example, in solving the initial value problem (\cite{ye14,ye24})
\begin{equation}
  \label{eqn:ode_nonhomogeneous}
  \dot{x}(t)=-Ax(t)+b(t),\;x(0)=x_0.
\end{equation}
See \cite{ye10,R13,R18} for some other applications.

A large number of matrix exponential problems concern a {\em positive definite} $A$ (i.e. $A+A^*$ is Hermitian positive definite), which defines a stable dynamical system \eqref{eqn:ode_nonhomogeneous} with a solution converging to a steady state. Another important class of problems involve a skew-Hermitian matrix $A$ (i.e. $A=iH$ with $H$ being Hermitian), for which \eqref{eqn:ode_nonhomogeneous} has a  norm-conserving solution. Such systems can be used to model a variety of physical problems where certain quantities such as energy are conserved. For example, a spectral method for solving the time-dependent Schr\"{o}dinger equation modeling N electrons leads to \eqref{eqn:ode_nonhomogeneous} with a skew-Hermitian matrix; see \cite{jGuan07,Park-Light,Schneider-Collins}. While we will study a general non-Hermitian $A$, we are particularly interested in these two important classes of problems, where stronger theoretical results can be derived.

The Krylov subspace methods are a powerful class of iterative algorithms for solving many large scale linear algebra problems. Initially introduced by Gallopoulos and Saad \cite{ye14,ye24}, they have also become a popular method for approximating
\begin{equation}
  \label{eqn:w_re}
  w(\tau):=e^{-\tau A}v,
\end{equation}
where $\tau\in\RR$ is a fixed parameter typically representing a time step. For the ease of notation, we will assume throughout that $\|v\|_2=1$. A comprehensive theory has been developed in the literature with error bounds demonstrating convergence of the Krylov subspace methods and its relation to certain properties of the matrix. For example, earlier results in \cite{ye14,ye24} relate convergence of the Krylov subspace methods to the norm of the matrix $\tau A$. More refined error bounds have later been derived, that provide sharper estimates of the errors by considering additional spectral information such as enclosing regions of the field of values of $A$ or positive definiteness of $A$; see \cite{reichel,ye9,ye10,kniz91,ye15,ye21,ye24} and the references contained therein. For a real symmetric positive definite matrix $A$, it has been shown in a recent work \cite{ye} that the speed of convergence is also determined by the condition number of $A$ as in the conjugate gradient method. For positive definite matrices that are not necessarily Hermitian, stronger convergence bounds have also been obtained in \cite{reichel,ye10,kniz91,ye15} in terms of  the field of values. However, most of these bounds are derived by assuming the field of values lying in a certain pre-defined region, which are not easy to apply or interpret. There is an inherited theoretical difficulty in quantitatively characterizing the influence on the convergence by the field of values, a two dimensional object. This issue also arises in the theory of the Krylov subspace methods for solving linear systems.

In this paper, we study the relation between the convergence of the Krylov subspace methods  and the field of values through its bounding rectangle $[a,b]\times[-c,c]$ where $a=\lambda_{\min}\left(\frac{A+A^*}{2}\right)$, $b=\lambda_{\max}\left(\frac{A+A^*}{2}\right)$ (the smallest and the largest eigenvalue of the Hermitian part of $A$) and $c=\left|\lambda_{\max}\left(\frac{A-A^*}{2}\right)\right|$ (the largest eigenvalue in absolute value of the skew-Hermitian part of $A$). With this approach, new {\em a priori} error bounds will be derived  in terms of $a$, $b$ and $c$. Simplified bounds will be presented for non-Hermitian positive definite matrices and skew-Hermitian matrices, which relate the speed of convergence to the size and the shape of the rectangular region. In particular, our bounds explain an interesting  observed  convergence behavior where the error may first stagnate for certain iterations before it starts to converge. Numerical examples will be presented to demonstrate the behavior of the new error bounds.

In developing our {\em a priori} error bounds, we also derive a new {\em a posteriori} error bound that is shown  to provide  a  sharp and computable estimate of the error. The main technique used in deriving new {\em a priori} error bounds  is the same as in the literature \cite{bebo14,ye3,reichel,kniz91,ye15} by constructing Faber polynomial approximation of the exponential function in a region containing the field of values. The novelty in this work is to use the Jacobi elliptic functions to construct a conformal mapping for the rectangular region that tightly encloses the fields of value and to show that this highly complicated mapping can be simplified to yield some simple final bounds.

The paper is organized as follows. In Section \ref{sec:preliminaries}, we first present some preliminaries about the Faber polynomial approximation and the Jacobi elliptic functions. In Section \ref{sec:aposteriori}, we present a new {\em a posteriori} error bound, which relates the convergence to the decay properties of functions of banded matrices. To study this decay behavior, we construct a conformal mapping in Section \ref{sec:conformal_mapping} and present our new {\em a priori} error bound in Section \ref{sec:apriori_nonhermitian}. In Section \ref{sec:apriori_skewhermitian}, we apply the same idea on skew-Hermitian matrices and derive simpler  {\em a priori} bounds. Numerical examples are presented in Section \ref{sec:numerical_examples} and some concluding remarks in Section \ref{sec:concluding_remarks}.


\section{Preliminaries}
\label{sec:preliminaries}

In this section, we briefly discuss some related results in complex analysis that will be needed.

\subsection{Faber polynomials}

Faber polynomials extend the theory of power series to domains more general than a disk. It starts with the Riemann mapping theorem \cite[Theorem 1.2]{markushevich} that states that every connected domain in the extended complex plane whose boundary contains more than one point can be mapped conformally onto a disk with its center at the origin. Let $\bar{\mathbb{C}}=\mathbb{C}\cup\{\infty\}$ be the extended complex plane and $D$ be a bounded,
closed continuum in the complex plane with boundary $\Gamma$ such that the complement of $D$ is simply connected in the extended plane and contains the point at $\infty$. A continuum is a non-empty, compact and connected subset of $\mathbb{C}$. Then there exists a function $w=\Phi(z)$ which maps the complement of $D$ conformally onto the exterior of a circle $|w|=\rho>0$ and satisfies the normalization conditions
\begin{equation}
  \label{eqn:normalization}
  \Phi(\infty)=\infty,\;\lim_{z\to\infty}\frac{\Phi(z)}{z}=1.
\end{equation}
Then, the function $\Phi(z)$ has a Laurent expansion at infinity of the form
\begin{equation*}
  \Phi(z)=z+\alpha_0+\frac{\alpha_{-1}}{z}+\cdots.
\end{equation*}
Moreover, given any integer $n>0$, $[\Phi(z)]^n$ has a Laurent expansion of the form
\begin{equation*}
  [\Phi(z)]^n=z^n+\alpha_{n-1}^{(n)}z^{n-1}+\cdots+\alpha_0^{(n)}+\frac{\alpha_{-1}^{(n)}}{z}+\cdots
\end{equation*}
at infinity \cite[p. 104]{markushevich}.
Then, we call the following polynomial containing non-negative powers of $z$ in the expansion
\begin{equation*}
  \Phi_n(z)=z^n+\alpha_{n-1}^{(n)}z^{n-1}+\cdots+\alpha_0^{(n)}
\end{equation*}
the Faber polynomials generated by  $D$.

The Faber polynomials can be used to approximate analytic functions on $D$, essentially through the power series approximation of a transformed function on $|w|\le\rho$. Let $\Psi$ be the inverse of $\Phi$ and let $C_R$ be the image under $\Psi$ of the circle $|w|=R>\rho$. We denote by $I(C_R)$ the bounded region enclosed by $C_R$. By \cite[Theorem 3.17]{markushevich}, every function $f(z)$ analytic on $I(C_{R})$ can be represented on $I(C_{R})$ as   a series of the Faber polynomials
\begin{equation}
  \label{eqn:faber_series}
  f(z)=\sum_{n=0}^\infty a_n\Phi_n(z)
\end{equation}
with the coefficients $a_n=\frac{1}{2\pi i}\int_{|w|=R}\frac{f[\Psi(w)]}{w^{n+1}}dw$. The partial sum of the above series
\begin{equation}
  \label{eqn:faber_sum}
  \Pi_N(z)=\sum_{n=0}^N a_n\Phi_n(z)
\end{equation}
is a polynomial of degree at most $N$ that we can use to approximate $f(z)$ on $I(C_{R})$. The next theorem of \cite{razouk54} presents some approximation bounds concerning $\Pi_N$. We first need to introduce the definition of total rotation of the boundary. For this, we assume $D$ is a closed Jordan region, i.e. its boundary $\Gamma$ is rectifiable. Then there exists a tangent vector that makes an angle $\Theta(z)$ with the positive real axis at almost all points $z\in\Gamma$. We say that $\Gamma$ has bounded total rotation $V$ if $V=\int_\Gamma|d\Theta(z)|<\infty$. Then $V\geq2\pi$ and the equality holds if $D$ is convex; see \cite{razouk54}.

\begin{thm}
  \label{thm:razouk54}
  \cite[Corollary 2.2]{razouk54}
  Assume $D$ is a closed Jordan region whose boundary $\Gamma$ has bounded total rotation $V$. For any $R>\rho$, let $f$ be an analytic function in $I(C_R)$. We have for any $N\geq0$,
  \begin{equation}
    \label{eqn:razouk54}
    ||f-\Pi_N||_\infty\leq\frac{M(R)V}{\pi}\frac{\left(\frac{\rho}{R}\right)^{N+1}}{1-\frac{\rho}{R}},
  \end{equation}
  where $M(R)=\max\limits_{z\in C_R}|f(z)|$ and $||\cdot||_\infty$ denotes the uniform norm on $I(C_R)$.
\end{thm}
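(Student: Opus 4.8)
The plan is to regard $\Pi_N$ as the $N$-th partial sum of the Faber series \eqref{eqn:faber_series}, so that the error is exactly the tail
\begin{equation*}
  f(z)-\Pi_N(z)=\sum_{n=N+1}^\infty a_n\Phi_n(z),
\end{equation*}
and then to bound this tail termwise. The whole estimate \eqref{eqn:razouk54} then splits into two independent ingredients: a geometric decay bound on the Faber coefficients $a_n$ governed by $R$, and a geometric growth bound on the Faber polynomials $\Phi_n$ over $D$ governed by $\rho$. Their product is summed as a geometric series whose ratio is precisely $\rho/R<1$, and it is this ratio that produces the factor $(\rho/R)^{N+1}/(1-\rho/R)$.

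First I would estimate the coefficients. Since $f$ is analytic on $I(C_R)$ and $a_n=\frac{1}{2\pi i}\int_{|w|=R}\frac{f[\Psi(w)]}{w^{n+1}}\,dw$, parametrizing $w=Re^{i\phi}$ and using that $\max_{|w|=R}|f[\Psi(w)]|=\max_{z\in C_R}|f(z)|=M(R)$ gives the Cauchy-type bound $|a_n|\le M(R)/R^n$ at once. This step is routine; $R$ enters only through the amplitude $M(R)$ and the denominator $R^n$.

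The heart of the argument, and the step I expect to be the main obstacle, is the sharp uniform estimate
\begin{equation*}
  \max_{z\in\bar D}|\Phi_n(z)|\le\frac{V}{\pi}\,\rho^n,\qquad n\ge1,
\end{equation*}
which is the only place where the geometry of $\Gamma$ is used and where both $V$ and the constant $1/\pi$ enter (it reduces to the familiar $|\Phi_n|\le 2\rho^n$ when $D$ is convex, since then $V=2\pi$). I would derive it from the integral representation of the Faber polynomials: starting from the generating identity $\frac{\Psi'(w)}{\Psi(w)-z}=\sum_{n\ge0}\Phi_n(z)\,w^{-n-1}$, valid for $z\in D$ and $|w|>\rho$, coefficient extraction yields $\Phi_n(z)=\frac{1}{2\pi i}\int_{|w|=\rho}\frac{w^n\Psi'(w)}{\Psi(w)-z}\,dw$ after shrinking the contour to $|w|=\rho$ (legitimate for interior $z$, since then $\Psi(w)\neq z$ on the circle). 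Writing $\frac{\Psi'(w)}{\Psi(w)-z}\,dw=d\log(\Psi(w)-z)$ and performing an integration-by-parts / total-variation argument, the estimate collapses to the total variation of the argument of the tangent along $\Gamma$, which is exactly $V$, while the winding of $\Psi(w)-z$ about the interior point $z$ supplies the normalizing constant. The bound on all of $\bar D$ then follows from the maximum modulus principle, $\Phi_n$ being a polynomial. This is the classical bounded-rotation estimate of K\"ov\'ari and Pommerenke; reconstructing the precise constant $V/\pi$, rather than merely a qualitative $O(\rho^n)$, is the delicate part, since one must carefully track the winding contribution and identify $\int_\Gamma|d\Theta|$ with $V$.

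Finally I would combine the two bounds on $\bar D$,
\begin{equation*}
  |f(z)-\Pi_N(z)|\le\sum_{n=N+1}^\infty|a_n|\,|\Phi_n(z)|\le\frac{M(R)V}{\pi}\sum_{n=N+1}^\infty\Big(\frac{\rho}{R}\Big)^{\!n},
\end{equation*}
and sum the geometric series (convergent because $R>\rho$) to reach $\frac{M(R)V}{\pi}\frac{(\rho/R)^{N+1}}{1-\rho/R}$, which is \eqref{eqn:razouk54}. I would also note a point of interpretation: the quantitative rate is genuinely controlled over $\bar D=I(C_\rho)$, where the Faber-polynomial estimate lives, since near $C_R$ one has $|\Phi_n|\sim R^n$ and the termwise products $|a_n\Phi_n|\sim M(R)$ do not decay; convergence of the series on the larger open set $I(C_R)$ guarantees only that $\Pi_N$ is well defined there.
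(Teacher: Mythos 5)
Your proposal is correct and is essentially the proof behind the cited result: the paper does not prove Theorem~\ref{thm:razouk54} at all but imports it verbatim from Ellacott \cite[Corollary 2.2]{razouk54}, and Ellacott's argument is exactly your decomposition --- the Cauchy estimate $|a_n|\le M(R)/R^{n}$ for the Faber coefficients, the K\"ov\'ari--Pommerenke bounded-rotation estimate $\max_{z\in D}|\Phi_n(z)|\le (V/\pi)\rho^{n}$ (the one genuinely nontrivial ingredient, which you correctly isolate as the key lemma), and summation of the geometric series with ratio $\rho/R$, giving precisely the constant $M(R)V/\pi$ and the factor $(\rho/R)^{N+1}/(1-\rho/R)$. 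Your closing interpretive remark is also a sharp catch: the argument controls the error in the uniform norm on $D=I(C_\rho)$, which is how the corollary is stated in \cite{razouk54}; the phrase ``uniform norm on $I(C_R)$'' in the present statement is an imprecision, but a harmless one for this paper, since the bound is later applied only on $W(H_k)\subseteq[a_k,b_k]\times[-c_k,c_k]\subseteq I(C_r)$ in the proof of Theorem~\ref{thm:bound_ht_re}.
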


Theorem \ref{thm:razouk54} is stated with $C_R$ defined from the conformal map $\Phi$ satisfying the normalization condition \eqref{eqn:normalization}. In the literature (see \cite{reichel} for example), another normalization has also been used and may be more convenient in our application. We may consider a conformal map $\widehat\Phi$ that maps the exterior of $D$ onto the exterior of the unit disk (i.e. requiring $\rho=1$ rather than \eqref{eqn:normalization}). The above theorem can be adapted to $\widehat\Phi$ through a simple normalization transformation. Namely, given $\widehat\Phi$, let $\rho=\lim\limits_{z\to\infty}\frac{z}{\widehat\Phi(z)}$ and $\Phi(z):={\rho}\widehat\Phi(z)$, where we assume $\rho$ is finite. Then $\Phi$ satisfies the normalization condition \eqref{eqn:normalization} but now maps the exterior of $D$ onto the exterior of the disk $|w|=\rho$. Applying Theorem \ref{thm:razouk54} to $\Phi$, \eqref{eqn:razouk54} holds for any $R>\rho$. Let $r:=R/\rho>1$. Let $C_R$ be the inverse image under $\Phi$ of the circle $|w|=R$ and $\widehat C_r$ be the inverse image under $\widehat\Phi$ of the circle $|w|=r$. It is easy to check that $C_R=\widehat C_r$ and then $M(R):=\max\limits_{z\in C_R}|f(z)|=\max\limits_{z\in\widehat C_r}|f(z)|$. Thus, \eqref{eqn:razouk54} is reduced to
\begin{equation}
  \label{eqn:rbound}
  ||f-\Pi_N||_\infty\leq\frac{\widehat M (r)V}{\pi}\frac{\left(\frac{1}{r}\right)^{N+1}}{1-\frac{1}{r}},
\end{equation}
where $\widehat M (r) :=\max\limits_{\widehat\Phi(z)=r}|f(z)|$. Namely, Theorem \ref{thm:razouk54} holds verbatim for a conformal map that is normalized to map the exterior of $D$ onto the exterior of the unit disk. We note however that $\rho$ as defined in the two normalizations is invariant and is called logarithmic capacity of $D$.

\subsection{Jacobi elliptic functions}

In this subsection, we introduce the Jacobi elliptic functions, which will be used to construct a conformal mapping in Section \ref{sec:apriori_nonhermitian}. More details about the Jacobi elliptic functions can be found in \cite{handbook}.

Elliptic functions were first introduced as inverse functions of (incomplete) elliptic integrals. So before the introduction of the Jacobi elliptic functions, we first state the definition and properties of elliptic integrals. Given $\phi\in\mathbb{C}$ and a real parameter $m$ with $0<m<1$, the (incomplete) Jacobi elliptic integral of the first kind is defined as
\begin{equation}
  \label{eqn:jacobi_f}
  F(\phi,m):=\int_0^\phi(1-m\sin^2\theta)^{-\frac{1}{2}}d\theta.
\end{equation}
The (incomplete) Jacobi elliptic integral of the second kind is defined as
\begin{equation*}
  E(\phi,m):=\int_0^\phi(1-m\sin^2\theta)^\frac{1}{2}d\theta.
\end{equation*}
When $\phi=\frac{\pi}{2}$, the corresponding integrals
\begin{align*}
  K(m) & :=F\left(\frac{\pi}{2},m\right)=\int_0^\frac{\pi}{2}(1-m\sin^2\theta)^{-\frac{1}{2}}d\theta, \\
  E(m) & :=E\left(\frac{\pi}{2},m\right)=\int_0^\frac{\pi}{2}(1-m\sin^2\theta)^\frac{1}{2}d\theta
\end{align*}
are called the complete Jacobi elliptic integrals of the first kind and the second kind.
Let $m_1:=1-m$, the complementary parameter of $m$. Then, $0<m_1<1$. For simplicity, we shall use the following notations.
\begin{equation}
  \label{eqn:jacobi_complete}
  \begin{aligned}
    K & :=K(m), \;\; K':=K(m_1)=K(1-m); \\
    E & :=E(m), \;\; E':=E(m_1)=E(1-m).
  \end{aligned}
\end{equation}

We now introduce the Jacobi elliptic functions. There are a total of twelve Jacobi elliptic functions in the family, but we will only discuss the basic three of them that will be used in this work. If $u=F(\phi,m)$ where $F(\phi,m)$ is the incomplete elliptic integral of the first kind defined in \eqref{eqn:jacobi_f}, three of the Jacobi elliptic functions are defined as
\begin{equation}
  \label{eqn:jacobi_functions}
  \begin{aligned}
    sn(u|m) & :=\sin\phi \\
    cn(u|m) & :=\cos\phi \\
    dn(u|m) & :=\sqrt{1-m\sin^2\phi}
  \end{aligned}
\end{equation}
The notations $sn(\sigma|m)$, $cn(\sigma|m)$ and $dn(\sigma|m)$ indicate that $sn$, $cn$ and $dn$ are functions of two independent arguments: a complex argument $u$ and a real parameter $m\in(0,1)$. Furthermore, for a fixed $m\in(0,1)$, $sn(u):=sn(u|m)$, $cn(u):=cn(u|m)$ and $dn(u):=dn(u|m)$ are doubly periodical meromorphic functions defined on $u\in\mathbb{C}$ \cite[p. 14]{tables}.

In later sections, we will need some properties of the Jacobi elliptic integrals and Jacobi elliptic functions. We summarize them in the proposition below. For details, see \cite{handbook}, \cite{dictionary} and \cite{tables}.

\begin{prop}
  \label{prop:jacobi}
  \begin{enumerate}

  \item
  $K =K(m)$ and $E=E(m)$ are positive-valued functions of $m$. Moreover, they are differentiable with respect to the parameter $m\in(0,1)$, and
  \begin{align}
    \label{eqn:derivative_k}
    \frac{dK}{dm} & =\frac{E-m_1K}{2mm_1}, \\
    \label{eqn:derivative_e}
    \frac{dE}{dm} & =\frac{E-K}{2m}.
  \end{align}

  \item
  \cite[17.3.26, p. 591]{handbook}
  \begin{equation}
    \label{eqn:17.3.26}
    \lim_{m\to1}\left[K-\frac{1}{2}\ln\left(\frac{16}{m_1}\right)\right]=0
  \end{equation}

  \item
  \cite[17.4.5, p. 592]{handbook}
  \begin{equation}
    \label{eqn:17.4.5}
    E(u+2iK')=E(u)+2i(K'-E')
  \end{equation}

  \item $sn$, $cn$ and $dn$ satisfy
  \begin{align*}
    sn^2(u|m)+cn^2(u|m) & =1 \\
    m\cdot sn^2(u|m)+dn^2(u|m) & =1
  \end{align*}

  \item
  \cite[Table 16.2, p. 570]{handbook}
  $sn$, $cn$ and $dn$ are one-valued, doubly-periodic functions. For any $l,n\in\mathbb{Z}$,
  \begin{align*}
    sn(u+2lK+2niK'|m) & =(-1)^lsn(u|m) \\
    cn(u+2lK+2niK'|m) & =(-1)^{l+n}cn(u|m) \\
    dn(u+2lK+2niK'|m) & =(-1)^ndn(u|m)
  \end{align*}

  \item
  \cite[Table 16.8, p. 572]{handbook}
  \begin{align}
    \nonumber
    sn(2iK'-\sigma|m) & =sn(-\sigma|m)=-sn(\sigma|m) \\
    \nonumber
    cn(2iK'-\sigma|m) & =-cn(-\sigma|m)=-cn(\sigma|m) \\
    \label{eqn:dn_sigma_negative}
    dn(2iK'-\sigma|m) & =-dn(-\sigma|m)=-dn(\sigma|m)
  \end{align}

  \item
  \cite[Table 16.16, p. 574]{handbook}
  Derivatives:
  \begin{align}
    \label{eqn:derivative_sn}
    \frac{d}{du}sn(u|m) & =cn(u|m)\cdot dn(u|m) \\
    \label{eqn:derivative_cn}
    \frac{d}{du}cn(u|m) & =-sn(u|m)\cdot dn(u|m) \\
    \label{eqn:derivative_dn}
    \frac{d}{du}dn(u|m) & =-m\cdot sn(u|m)\cdot cn(u|m)
  \end{align}

  \item
  \cite[16.21, p. 575]{handbook}
  Let $u=x+iy$ where $x,y\in\mathbb{R}$ and denote
  \begin{align*}
    & s=sn(x|m),c=cn(x|m),d=dn(x|m), \\
    & s_1=sn(y|m_1),c_1=cn(y|m_1),d_1=dn(y|m_1),
  \end{align*}
  Then
  \begin{align}
    \label{eqn:complex_sn}
    sn(x+iy|m) & =\frac{s\cdot d_1+ic\cdot d\cdot s_1\cdot c_1}{c_1^2+ms^2\cdot s_1^2} \\
    \label{eqn:complex_cn}
    cn(x+iy|m) & =\frac{c\cdot c_1+is\cdot d\cdot s_1\cdot d_1}{c_1^2+ms^2\cdot s_1^2} \\
    \label{eqn:complex_dn}
    dn(x+iy|m) & =\frac{d\cdot c_1\cdot d_1+ims\cdot c\cdot s_1}{c_1^2+ms^2\cdot s_1^2}
  \end{align}

  \end{enumerate}
\end{prop}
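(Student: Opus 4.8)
The bulk of this proposition collects classical identities for the complete elliptic integrals and the Jacobi elliptic functions, and items (2)--(8) may be quoted verbatim from the references attached to each line (the Handbook \cite{handbook}, together with \cite{dictionary} and \cite{tables}). The only statement that genuinely calls for an independent derivation is item (1), the formulas for $dE/dm$ and $dK/dm$. The plan is to establish these two by differentiation under the integral sign, and then to indicate briefly how the remaining items sit inside the standard framework.

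For $dE/dm$ the argument is short. Since the integrand of $E(m)=\int_0^{\pi/2}(1-m\sin^2\theta)^{1/2}\,d\theta$ is smooth in $m$ on any compact subinterval of $(0,1)$, differentiation under the integral sign is justified and gives
\begin{equation*}
  \frac{dE}{dm}=-\frac12\int_0^{\pi/2}\frac{\sin^2\theta}{(1-m\sin^2\theta)^{1/2}}\,d\theta .
\end{equation*}
Writing $m\sin^2\theta=1-(1-m\sin^2\theta)$ and dividing by $m$ splits this integrand into the integrand of $K$ minus the integrand of $E$, which yields $dE/dm=(E-K)/(2m)$ at once.

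For $dK/dm$ the same differentiation produces a factor $(1-m\sin^2\theta)^{-3/2}$, so a purely algebraic split no longer closes the computation; here I would integrate by parts. The idea is to observe that the bracketed quantity in $\frac{d}{d\theta}\bigl[\sin\theta\cos\theta\,(1-m\sin^2\theta)^{-1/2}\bigr]$ vanishes at both endpoints $\theta=0,\pi/2$, so the integral of this total derivative over $[0,\pi/2]$ is zero. Expanding the derivative expresses that vanishing as a linear relation among $K$, $E$, and the two integrals $\int_0^{\pi/2}\sin^2\theta\,(1-m\sin^2\theta)^{-3/2}\,d\theta$ and $\int_0^{\pi/2}(1-m\sin^2\theta)^{-3/2}\,d\theta$. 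Combining this with the standard reduction $\int_0^{\pi/2}(1-m\sin^2\theta)^{-3/2}\,d\theta=E/m_1$ isolates the first integral, and since $dK/dm$ equals one half of it, substituting back and using $m+m_1=1$ delivers $dK/dm=(E-m_1K)/(2mm_1)$.

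The remaining items follow from the defining relations \eqref{eqn:jacobi_functions} together with the analytic-continuation theory of the Jacobi functions. The Pythagorean identities in item (4) are immediate from $\sin^2\phi+\cos^2\phi=1$ and the definition of $dn$; the derivative rules in item (7) come from differentiating \eqref{eqn:jacobi_functions} in $u$ and using $du/d\phi=(1-m\sin^2\phi)^{-1/2}$; and the periodicity and reflection formulas in items (5)--(6) follow from the lattice of periods $4K$ and $2iK'$. The genuinely delicate step in a fully self-contained treatment is the complex-argument decomposition of item (8): one would combine the real addition theorems for $sn$, $cn$, $dn$ with the imaginary-modulus transformation $sn(iy\,|\,m)=i\,sn(y\,|\,m_1)/cn(y\,|\,m_1)$ and its companions, which is exactly what turns the imaginary part into functions of the complementary modulus $m_1$. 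This bookkeeping is the main obstacle in a from-scratch proof, and it is why quoting \cite[16.21]{handbook} is the efficient route.
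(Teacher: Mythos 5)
Your proposal is correct, but it actually does more than the paper does: the paper supplies no proof of this proposition at all. It is a compendium of classical facts, stated with the blanket pointer ``For details, see \cite{handbook}, \cite{dictionary} and \cite{tables}'' and with item-by-item citations to Abramowitz--Stegun for items (2)--(8), so the paper's route is pure citation --- exactly what you propose for items (2)--(8). Where you genuinely add something is item (1), the only uncited item, and your derivation is sound: the $dE/dm$ computation (differentiate under the integral sign, then split $\sin^2\theta=\bigl(1-(1-m\sin^2\theta)\bigr)/m$) is exactly right, and the $dK/dm$ argument closes as well. One bookkeeping remark on the latter: writing $\Delta=(1-m\sin^2\theta)^{1/2}$ and substituting $\sin^2\theta=(1-\Delta^2)/m$, $\cos^2\theta=(\Delta^2-m_1)/m$ after expanding $\frac{d}{d\theta}\bigl[\sin\theta\cos\theta\,\Delta^{-1}\bigr]$, the endpoint-vanishing identity collapses directly to
\begin{equation*}
  \int_0^{\pi/2}\Delta^{-3}\,d\theta=\frac{E}{m_1},
\end{equation*}
i.e.\ the ``standard reduction'' you invoke is itself what the integration by parts proves, rather than an independent input to be combined with a four-term linear relation. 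With that reduction in hand, $dK/dm=\frac{1}{2}\int_0^{\pi/2}\sin^2\theta\,\Delta^{-3}\,d\theta=\frac{1}{2m}\left(\frac{E}{m_1}-K\right)=\frac{E-m_1K}{2mm_1}$, as you say. Your sketch of item (8) via the real addition theorems combined with Jacobi's imaginary transformation is likewise the standard argument; quoting \cite[16.21, p. 575]{handbook}, as both you and the paper do, is indeed the efficient route.
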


We will also need to use the signs of the real and imaginary parts of $sn(u|m)$, $cn(u|m)$ and $dn(u|m)$ when $m\in(0,1)$ and $u\in\mathbb{C}$ is in the rectangular domain $[-K,K]\times[0,2iK']$ (i.e. $\operatorname{Re}(u)\in[-K,K]$ and $\operatorname{Im}(u)\in[0,2K']$). This is discussed in \cite[pp. 172-176]{dictionary} and we summarize it in   Table \ref{tab:sn}, \ref{tab:cn} and \ref{tab:dn} for easy future references.

\begin{table}[h]
  \centering
  \begin{tabular}{|c|c|c|}
    \hline
    \backslashbox{$\operatorname{Im}(u)$}{$\operatorname{Re}(u)$} & $(-K,0)$ & $(0,K)$ \\
    \hline
    $(K',2iK')$ & $(-,-)$ & $(+,-)$ \\
    \hline
    $(0,K')$ & $(-,+)$ & $(+,+)$ \\
    \hline
  \end{tabular}
  \caption{Signs of $(\operatorname{Re}(sn(u|m)), \operatorname{Im}(sn(u|m)))$}
  \label{tab:sn}
\end{table}

\begin{table}[h]
  \centering
  \begin{tabular}{|c|c|c|}
    \hline
    \backslashbox{$\operatorname{Im}(u)$}{$\operatorname{Re}(u)$} & $(-K,0)$ & $(0,K)$ \\
    \hline
    $(K',2iK')$ & $(-,+)$ & $(-,-)$ \\
    \hline
    $(0,K')$ & $(+,+)$ & $(+,-)$ \\
    \hline
  \end{tabular}
  \caption{Signs of $(\operatorname{Re}(cn(u|m)), \operatorname{Im}(cn(u|m)) )$}
  \label{tab:cn}
\end{table}

\begin{table}[h]
  \centering
  \begin{tabular}{|c|c|c|}
    \hline
    \backslashbox{$\operatorname{Im}(u)$}{$\operatorname{Re}(u)$} & $(-K,0)$ & $(0,K)$ \\
    \hline
    $(K',2iK')$ & $(-,+)$ & $(-,-)$ \\
    \hline
    $(0,K')$ & $(+,+)$ & $(+,-)$ \\
    \hline
  \end{tabular}
  \caption{Signs of $(\operatorname{Re}(sn(u|m)), \operatorname{Im}(sn(u|m)))$}
  \label{tab:dn}
\end{table}


\section{{\em A posteriori} error bound}
\label{sec:aposteriori}

In this section, we first introduce the Arnoldi method for approximating $ w(\tau)=e^{-\tau A}v$ and then discuss an {\em a posteriori} error bound. Given $A\in\C^{n\times n}$ and $v\in\mathbb{C}^{n}$ with $||v||_2=1$, $k$ iterations of the Arnoldi process generates an orthonormal basis $\{v_1,v_2,\cdots,v_k,v_{k+1}\}$ for the Krylov subspace $K_{k+1}(A,v)=span\{v,Av,A^2v,\cdots,A^kv\}$ by
\begin{equation*}
  h_{k+1,k}v_{k+1}=Av_k-\sum_{i=1}^kh_{i,k}v_i, \;\; h_{k+1,k}\ge0.
\end{equation*}
Simultaneously, a $k$-by-$k$ upper Hessenberg matrix $H_k=[h_{ij}]$ is generated satisfying
\begin{equation}
  \label{eqn:arnoldi}
  AV_k=V_kH_k+h_{k+1,k}v_{k+1}e_k^T,
\end{equation}
where $V_k=[v_1,v_2,\cdots,v_k]$ and $e_k\in\mathbb{R}^n$ is the $k$-th coordinate vector. We note that
\begin{equation}
  \label{eqn:hkplus1}
  h_{k+1,k}^2=\|Av_k\|^2-\sum_{i=1}^kh_{i,k}^2\le\|A\|^2.
\end{equation}

We can approximate $w(\tau)=e^{-\tau A}v$ by its orthogonal projection on $K_k(A,v)$, $V_kV_k^Te^{-\tau A}v$, which is further approximated as
\begin{equation*}
  V_kV_k^Te^{-\tau A}v=V_kV_k^Te^{-\tau A}V_ke_1\approx V_ke^{-\tau V_k^TAV_k}e_1=V_k^Te^{-\tau H_k}e_1.
\end{equation*}
We call
\begin{equation}
  \label{eqn:wk_re}
  w_k(\tau):=V_k^Te^{-\tau H_k}e_1
\end{equation}
the Arnoldi approximation to $w(\tau)$ in \eqref{eqn:w_re}; see \cite{ye14,ye24}.

Let $W(A):=\{x^*Ax:x\in\C^n;\|x\|_2=1\}$ be the field of values of $A$ and $\mu(A):=\max\left\{\operatorname{Re}(z):z\in W(A)\right\}$ be the logarithmic norm of $A$ (associated with the Euclidean inner product). We also define $\nu(A):=-\mu(-A)=\min\left\{\operatorname{Re}(z):z\in W(A)\right\}$. Then we have
\begin{equation}
  \label{eqn:nuA}
  \mu(A)=\lambda_{\max}\left(\frac{A+A^*}{2}\right)\;\mbox{ and }\; \nu(A)=\lambda_{\min}\left(\frac{A+A^*}{2}\right),
\end{equation}
where $\lambda_{\max}$ and $\lambda_{\min}$ denote the largest and the smallest eigenvalues respectively. In this notation, $A$ is positive definite if and only if $\nu(A)>0$. An important property associated with the logarithmic norm \cite{lognorm,lognorm1} is that for $t\ge0$,
\begin{equation}
  \label{eq:lognorm}
  ||e^{tA}||\leq e^{t\mu(A)}.
\end{equation}

We now present a bound on the approximation error $||w(\tau)-w_k(\tau)||$ in terms of the $(k,1)$ entry of the matrix $e^{-tH_k}$.

\begin{thm}
  \label{thm:aposteriori_re}
  Let $A\in\C^{n\times n}$ and $v\in\C^n$ with $||v||=1$. Let $V_k$ be the orthogonal matrix and $H_k$ be the upper Hessenberg matrix generated by the Arnoldi process for $A$ and $v$ satisfying \eqref{eqn:arnoldi}. Let $w_k(\tau)=V_ke^{-\tau H_k}e_1$ be the Arnoldi approximation to $w(\tau)=e^{-\tau A}v$. Then the approximation error satisfies
  \begin{equation}
    \label{eqn:aposteriori_re}
    ||w(\tau)-w_k(\tau)||\leq h_{k+1,k}e^{-\min\{\nu(A),0\}\tau}\int_0^\tau|h(t)|dt,
  \end{equation}
  where
  \begin{equation}
    \label{eqn:ht_re}
    h(t):=e_k^Te^{-tH_k}e_1
  \end{equation}
  is the $(k,1)$ entry of the matrix $e^{-tH_k}$ and $\nu(A)$ is defined in \eqref{eqn:nuA}.
\end{thm}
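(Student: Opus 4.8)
The plan is to derive an exact integral (Duhamel) representation for the error $w(\tau)-w_k(\tau)$ and then estimate it using the logarithmic-norm bound \eqref{eq:lognorm}. First I would observe that since $\|v\|_2=1$ we have $v=V_ke_1$, so $w(\tau)=e^{-\tau A}v$ is the exact solution of the linear ODE $\dot w=-Aw$ with $w(0)=v$. The approximation $w_k(\tau)=V_ke^{-\tau H_k}e_1$ does \emph{not} solve this ODE exactly; differentiating it and substituting the Arnoldi identity \eqref{eqn:arnoldi} in the form $V_kH_k=AV_k-h_{k+1,k}v_{k+1}e_k^T$ gives
\begin{equation*}
  \dot w_k(\tau)=-V_kH_ke^{-\tau H_k}e_1=-Aw_k(\tau)+h_{k+1,k}\bigl(e_k^Te^{-\tau H_k}e_1\bigr)v_{k+1},
\end{equation*}
where the last scalar is exactly $h(\tau)$ from \eqref{eqn:ht_re}. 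Thus $w_k$ satisfies the same ODE up to the explicit residual forcing term $h_{k+1,k}h(\tau)v_{k+1}$.

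Next I would form the error $e(\tau):=w(\tau)-w_k(\tau)$. Subtracting the two ODEs yields the inhomogeneous linear system
\begin{equation*}
  \dot e(\tau)=-Ae(\tau)-h_{k+1,k}h(\tau)v_{k+1},\qquad e(0)=v-V_ke_1=0 .
\end{equation*}
By variation of parameters this has the closed form
\begin{equation*}
  e(\tau)=-h_{k+1,k}\int_0^\tau e^{-(\tau-t)A}h(t)v_{k+1}\,dt .
\end{equation*}
Taking norms and using $\|v_{k+1}\|=1$ gives $\|e(\tau)\|\le h_{k+1,k}\int_0^\tau \|e^{-(\tau-t)A}\|\,|h(t)|\,dt$.

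It then remains to bound the propagator $\|e^{-(\tau-t)A}\|$ uniformly for $t\in[0,\tau]$. Writing $s=\tau-t\ge0$ and applying \eqref{eq:lognorm} to the matrix $-A$, together with the defining relation $\mu(-A)=-\nu(A)$ that follows from \eqref{eqn:nuA}, I obtain $\|e^{-(\tau-t)A}\|=\|e^{s(-A)}\|\le e^{s\mu(-A)}=e^{-(\tau-t)\nu(A)}$. Finally I would bound this factor by a constant independent of $t$ through an elementary case split: if $\nu(A)\ge0$ the exponent $-(\tau-t)\nu(A)\le0$, so the factor is at most $1$; if $\nu(A)<0$ the factor is increasing in $\tau-t$ and hence at most $e^{-\tau\nu(A)}$, attained at $t=0$. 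Both cases combine into the single bound $e^{-\min\{\nu(A),0\}\tau}$, which pulls out of the integral and delivers \eqref{eqn:aposteriori_re}.

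I expect the only genuinely delicate point to be the directionality of the logarithmic-norm estimate: the propagator is $e^{-(\tau-t)A}$ with a \emph{positive} elapsed time $\tau-t$, so \eqref{eq:lognorm} must be applied to $-A$ rather than to $A$, which is precisely why the bound is governed by $\nu(A)=-\mu(-A)$ and not by $\mu(A)$. The remaining ingredients—the Arnoldi substitution producing the residual term, the Duhamel formula, and the sign case analysis—are routine.
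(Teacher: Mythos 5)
Your proposal is correct and follows essentially the same route as the paper's own proof: differentiate $w_k$, use the Arnoldi relation to expose the residual term $h_{k+1,k}h(t)v_{k+1}$, solve the resulting error ODE by variation of parameters, and bound the propagator via the logarithmic norm of $-A$ with the same sign case split on $\nu(A)$. No gaps; the point you flag as delicate (applying \eqref{eq:lognorm} to $-A$, giving $\mu(-A)=-\nu(A)$) is exactly how the paper handles it.
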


\begin{proof}
  First, we have
  $w'(t)=-Ae^{-tA}v=-Aw(t)$ and
  \begin{align*}
    w'_k(t)
    & =-V_kH_ke^{-tH_k}e_1 \\
    & =-(AV_k- h_{k+1,k}v_{k+1}e_k^T)e^{-tH_k}e_1 \\
    & =-Aw_k(t)+ h_{k+1,k}h(t)v_{k+1}.
  \end{align*}
  Let $E_k(t):=w(t)-w_k(t)$. Then
  \begin{align*}
    E'_k(t)
    & =-Aw(t)-(-Aw_k(t)+ h_{k+1,k}h(t)v_{k+1}) \\
    & =-AE_k(t)- h_{k+1,k}h(t)v_{k+1}.
  \end{align*}
  Note that $E_k(0)=w(0)-w_k(0)=v-V_ke_1=0$. Solving the initial value problem for $E_k(t)$, we have
  \begin{equation*}
    E_k(\tau)=-h_{k+1,k}\int_0^\tau h(t)e^{(t-\tau)A}v_{k+1}dt.
  \end{equation*}
  Since $\tau-t>0$ in the integral, using \eqref{eq:lognorm}, we have
  \begin{equation*}
    ||e^{(t-\tau)A}||=||e^{(\tau-t)(-A)}||\leq e^{(\tau-t)\mu(-A)}=e^{(t-\tau)\nu(A)}.
  \end{equation*}
  Then the approximation error satisfies
  \begin{align*}
    ||E_k(\tau)||
    & \leq h_{k+1,k}\left|\left|\int_0^\tau h(t)e^{(t-\tau)A}v_{k+1}dt\right|\right| \\
    & \leq h_{k+1,k}\int_0^\tau|h(t)|\cdot||e^{(t-\tau)A}|| dt \\
    & \leq h_{k+1,k}\int_0^\tau|h(t)|\cdot e^{(t-\tau)\nu(A)}dt
  \end{align*}
  Thus, if $\nu(A)\ge0$, we have $||E_k(\tau)||\leq h_{k+1,k}\int_0^\tau|h(t)|dt $. If $\nu(A)<0$, then
  \begin{equation*}
    ||E_k(\tau)||\leq h_{k+1,k}\int_0^\tau|h(t)|e^{t\nu(A)}e^{-\tau\nu(A)}dt\leq h_{k+1,k}e^{-\tau\nu(A)}\int_0^\tau|h(t)|dt.
  \end{equation*}
  This completes the proof.
\end{proof}

$h(t)$ in the above bound is computable {\em a posteriori} for any given $t$. Being the $(k,1)$ entry of the matrix $e^{-tH_k}$, it is expected to become small as $k$ increases because of a decay property associated with functions of a banded matrix (see \cite{bebo14,bebr13,ye2,ye3}). This provides an understanding of the convergence of the error. Indeed, in \S\ref{sec:apriori_nonhermitian}, we shall extend the techniques introduced in \cite{bebo14,ye3} to derive some sharp decay bounds on $h(t)$, which will result in some new {\em a priori} bounds. Before we do that, we will need to construct some conformal mapping first in the next section.

We also remark that the {\em a posteriori} bound in the theorem contains the integral of $h(t)$ that is not directly computable. For practical error estimates, we can approximate it using a quadrature rule, say, the Simpson's rule, by computing $h(t)$ at some selected discrete points. This provides a fairly sharp {\em a posteriori} error estimates; see the numerical examples in \S\ref{sec:numerical_examples}. Note that there are several {\em a posteriori} error estimates presented in \cite{ye24} derived from approximation of a different error expression, one of which is $\tau h(\tau)$.


\section{Conformal mapping}
\label{sec:conformal_mapping}

In this section, we construct a   conformal mapping which maps the exterior of a rectangle onto the exterior of a unit disk and discuss some of its properties. Given a rectangle in $\tilde{z}$-plane whose vertices are $a\pm ic$ and $b\pm ic$ where $b>a$ and $c>0$, we map the exterior of this rectangle conformally onto $|u|>1$. This can be done in the following three steps.

\begin{itemize}

  \item
  Step 1:
  \begin{equation}
    \label{eqn:phi_re_1}
    z=\phi_1(\tilde{z})=\tilde{z}-\frac{a+b}{2}
  \end{equation}
  shifts the original rectangle to a new rectangle with vertices $\pm\alpha\pm i\beta$, where $\alpha=\frac{b-a}{2}$ and $\beta=c$.

  \item
  Step 2: $\phi_2:z\mapsto w$ is defined through an auxiliary variable $\sigma$ by
  \begin{equation}
    \label{eqn:phi_re_2}
    \left\{
      \begin{aligned}
        z & =\alpha-\frac{i}{\lambda}\{E(\sigma|m)-m_1\sigma\} \\
        w & =\frac{1-dn(\sigma|m)}{\sqrt{m}sn(\sigma|m)}
      \end{aligned}
    \right.
  \end{equation}
  where $sn(\sigma|m)$, $cn(\sigma|m)$ and $dn(\sigma|m)$ are Jacobi elliptic functions and $E(\sigma|m):=\int_0^\sigma dn^2(z|m)dz$. The parameter $m$ is determined from $\alpha, \beta$ by the equation
  \begin{equation}
    \label{eqn:m_lambda}
    \frac{E-m_1K}{\beta}=\frac{E'-mK'}{\alpha},
  \end{equation}
  here $K$, $E$, $K'$ and $E'$ are functions of $m$ or $m_1:=1-m$ defined in \eqref{eqn:jacobi_complete}. The existence and uniqueness of $m$ will be shown in Lemma \ref{lem:unique} below. It is shown in \cite[p. 178]{dictionary} that $\phi_2$ conformally maps the exterior of the rectangle $[-\alpha,\alpha]\times[-\beta,\beta]$ to the upper half plane $\{\operatorname{Im}(w)>0\}$ and that the range of $\sigma$ is in the rectangle $[-K,K]\times[0,2iK']$.

  \item
  Step 3:
  \begin{equation}
    \label{eqn:phi_re_3}
    u=\phi_3(w)=\frac{i+w}{i-w}
  \end{equation}
  maps $\{\operatorname{Im}(w)>0\}$ onto $\{|u|>1\}$.

\end{itemize}

Now let
\begin{equation}
  \label{eqn:phi_re}
  \tilde{\Phi}:=\phi_3\circ\phi_2\circ\phi_1
\end{equation}
be the composition of the above three conformal mappings defined in \eqref{eqn:phi_re_1}, \eqref{eqn:phi_re_2} and \eqref{eqn:phi_re_3}. Then $\tilde{\Phi}$ maps the exterior of the rectangle $[a,b]\times[-c,c]$ conformally onto the exterior of the unit circle.

The rest of this section will present several results concerning $\tilde{\Phi}$ that we will use in the next section, but first we give a proof of existence of a unique solution of \eqref{eqn:m_lambda} that appears not readily available in the literature.

\begin{lem}
  \label{lem:unique}
  $E(m)-(1-m)K(m)\in(0,1)$ is an increasing function and $E'(m)-mK'(m)\in(0,1)$ is an decreasing function. For any $0<\alpha,\beta<+\infty$, there exists a unique $m\in(0,1)$, as a function of $\beta/\alpha$, satisfying \eqref{eqn:m_lambda}.
\end{lem}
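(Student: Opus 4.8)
The plan is to reduce the entire statement to the monotonicity and endpoint behavior of the single function $f(m):=E-m_1K$. First I would compute $f'(m)$ directly from the derivative formulas \eqref{eqn:derivative_k} and \eqref{eqn:derivative_e}, together with $dm_1/dm=-1$. A short calculation gives
\begin{equation*}
  f'(m)=\frac{E-K}{2m}+K-m_1\cdot\frac{E-m_1K}{2mm_1}=\frac{(E-K)-(E-m_1K)}{2m}+K=\frac{K}{2},
\end{equation*}
where the final equality uses $m_1-1=-m$. Since $K=K(m)>0$ on $(0,1)$, this shows $f$ is strictly increasing, and the miraculous cancellation here is what makes the whole argument clean.

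Next I would pin down the range of $f$ by evaluating the endpoints. As $m\to0^+$ we have $m_1\to1$ and $K,E\to\pi/2$, so $f(0^+)=\tfrac{\pi}{2}-\tfrac{\pi}{2}=0$. As $m\to1^-$ we have $E\to1$ while $K\to\infty$, but the divergence is only logarithmic: by \eqref{eqn:17.3.26}, $K\sim\tfrac12\ln(16/m_1)$, hence $m_1K\to0$ since $x\ln(1/x)\to0$ as $x\to0^+$, giving $f(1^-)=1-0=1$. Combined with strict monotonicity this proves $f(m)\in(0,1)$ on $(0,1)$. For the second function I would observe that $g(m):=E'-mK'=E(m_1)-(1-m_1)K(m_1)=f(1-m)$ is just the reflection of $f$, so it is strictly decreasing and likewise takes values in $(0,1)$; this settles the first assertion of the lemma.

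Finally I would rewrite \eqref{eqn:m_lambda} as $f(m)/g(m)=\beta/\alpha$ and study the left-hand ratio. Since $f$ is positive and increasing while $g=f(1-\cdot)$ is positive and decreasing, the quotient $f(m)/g(m)$ is strictly increasing on $(0,1)$, and from the endpoint values it runs continuously from $f(0^+)/g(0^+)=0$ up to $f(1^-)/g(1^-)=+\infty$. By the intermediate value theorem together with strict monotonicity, every prescribed ratio $\beta/\alpha\in(0,+\infty)$ is attained by exactly one $m\in(0,1)$, and this $m$ depends only on $\beta/\alpha$, as claimed. The one delicate point is the limit at $m\to1^-$: establishing $m_1K\to0$ requires the logarithmic asymptotics \eqref{eqn:17.3.26} rather than a naive product of limits, since $K$ itself blows up there; everything else is a routine consequence of the derivative identity $f'=K/2$.
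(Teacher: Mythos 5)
Your proposal is correct and follows essentially the same route as the paper's own proof: the same function $f(m)=E-m_1K$ with the derivative identity $f'=K/2$, the same endpoint limits via \eqref{eqn:17.3.26}, the reflection $E'-mK'=f(1-m)$, and the monotone ratio $f(m)/f(1-m)$ running from $0$ to $+\infty$ to give existence and uniqueness. The only difference is presentational—you spell out the cancellation in $f'$ and the monotonicity of the quotient slightly more explicitly than the paper does.
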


\begin{proof}
  Let $f(m):=E-m_1K=E(m)-(1-m)K(m)$ be a function of $m\in(0,1)$. Then $E'(m)-mK'(m) = f(1-m)$. By the definition of $K(m)$ and $E(m)$, $K(0)=\frac{\pi}{2}$, $E(0)=\frac{\pi}{2}$, and then
  \begin{equation}
    \label{eqn:limit_f_m0}
    \lim_{m\to0}f(m)=0.
  \end{equation}
  Moreover, by \eqref{eqn:17.3.26},
  \begin{equation*}
    \lim_{m\to1}m_1\left[K(m)-\frac{1}{2}\ln\left(\frac{16}{m_1}\right)\right]=0,
  \end{equation*}
  and therefore
  \begin{equation*}
    \lim_{m\to1}m_1K(m)=\lim_{m\to1}m_1\ln\left(\frac{16}{m_1}\right)=\lim_{m_1\to0}m_1\ln\left(\frac{16}{m_1}\right)=0.
  \end{equation*}
  Again by the definition of $E(m)$, $E(1)=1$. Then
  \begin{equation}
    \label{eqn:limit_f_m1}
    \lim_{m\to1}f(m)=E(1)-\lim_{m\to1}m_1K(m)=1.
  \end{equation}
  By \eqref{eqn:derivative_k} and \eqref{eqn:derivative_e}, $f(m)$ is differentiable in $(0,1)$ and
  \begin{equation*}
    \frac{d}{dm}f(m)=\frac{K(m)}{2}>0.
  \end{equation*}
  So $f$ is an increasing function of $m$ over $(0,1)$. Now consider
  \begin{equation}
    \label{eqn:definition_gm}
    g(m):=\frac{f(m)}{f(1-m)}=\frac{E(m)-(1-m)K(m)}{E(1-m)-mK(1-m)}.
  \end{equation}
  By \eqref{eqn:limit_f_m0} and \eqref{eqn:limit_f_m1}, $g(m)$ is an increasing function of $m$ over $(0,1)$ with
  \begin{equation*}
    \lim_{m\to0}g(m)=0,\;\lim_{m\to1}g(m)=+\infty.
  \end{equation*}
  Then for any $0<\alpha,\beta<+\infty$, there exists a unique $m\in(0,1)$ such that $g(m)=\frac{\beta}{\alpha}$, i.e., \eqref{eqn:m_lambda}.
\end{proof}

The parameter $m$ determined by \eqref{eqn:m_lambda} is defined by the aspect ratio $\beta/ \alpha$ (or the shape) of the rectangle $[a,b]\times[-c,c]$. For example, from the proof, $m\approx0$ if the rectangle is narrowly around the real axis, while $m\approx1$ if the rectangle is nearly a vertical line in the complex plane. When $m=1/2$, the rectangle is a square.

As in \S\ref{sec:preliminaries}, we denote by $C_r$ in the $\tilde{z}$-plane the inverse image of the circle $|u|=r$ under $\tilde{\Phi}$ for a given $r>1$. We need to determine the minimum of $\operatorname{Re}(\tilde{z})$ in $C_r$, i.e. the left most point of $C_r$. First we prove a lemma about the Jacobi elliptic functions, which is a direct result of Proposition \ref{prop:jacobi}.

\begin{lem}
  \label{thm:same_sign}
  For $u=x+iy$ where $-K<x<K$ and $0<y<2K'$,
  \begin{equation*}
    \operatorname{sgn}(\operatorname{Im}(cn(u|m)))=\operatorname{sgn}(\operatorname{Im}(dn(u|m))).
  \end{equation*}
\end{lem}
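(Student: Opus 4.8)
The plan is to read the imaginary parts of $cn(u|m)$ and $dn(u|m)$ straight off the complex addition formulas \eqref{eqn:complex_cn} and \eqref{eqn:complex_dn} and show that both are controlled by the same sign factor. Writing $u=x+iy$ and keeping the abbreviations $s=sn(x|m)$, $c=cn(x|m)$, $d=dn(x|m)$, $s_1=sn(y|m_1)$, $c_1=cn(y|m_1)$, $d_1=dn(y|m_1)$ of Proposition \ref{prop:jacobi}, those formulas give
\[
\operatorname{Im}(cn(u|m))=\frac{s\,d\,s_1\,d_1}{c_1^2+m\,s^2s_1^2},\qquad
\operatorname{Im}(dn(u|m))=\frac{m\,s\,c\,s_1}{c_1^2+m\,s^2s_1^2}.
\]
The denominator is a sum of two nonnegative real terms, hence strictly positive wherever $cn$ and $dn$ are finite; the only pole in the region is $u=iK'$ (there $c_1=cn(K'|m_1)=0$ and $s=sn(0|m)=0$, so the expression is an excluded $0/0$). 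Thus away from that point the sign of each imaginary part equals the sign of its numerator.

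Next I would factor out the common quantity $s\,s_1$ from the two numerators; what remains is the extra factor $d\,d_1$ for $cn$ and $m\,c$ for $dn$. The heart of the argument is that both extra factors are strictly positive on the prescribed domain. For the $dn$ factors, the identity $m\,sn^2(u|m)+dn^2(u|m)=1$ in Proposition \ref{prop:jacobi} together with the definition \eqref{eqn:jacobi_functions} of $dn$ as the positive square root shows that $dn$ of any real argument, with parameter $m$ or $m_1=1-m$ in $(0,1)$, is bounded below by a positive constant; hence $d=dn(x|m)>0$ and $d_1=dn(y|m_1)>0$. The factor $m>0$ is immediate. It remains to see that $c=cn(x|m)>0$, and this is exactly where the hypothesis $-K<x<K$ is used: by \eqref{eqn:jacobi_functions}, $cn(x|m)=\cos\phi$ where $x=F(\phi,m)$, and as $x$ ranges over $(-K,K)$ the angle $\phi$ ranges over $(-\tfrac{\pi}{2},\tfrac{\pi}{2})$ (since $F(\tfrac{\pi}{2},m)=K$), so $\cos\phi>0$ there.

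Combining the two observations, $\operatorname{sgn}(\operatorname{Im}(cn(u|m)))=\operatorname{sgn}(s\,s_1)=\operatorname{sgn}(\operatorname{Im}(dn(u|m)))$, which is the assertion; note that when $x=0$ one has $s=0$, so both imaginary parts vanish and the equality of signs holds trivially, meaning the argument never needs $s$ or $s_1$ to be nonzero.

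I do not expect a genuine obstacle here, only two points requiring care. First, one must confirm the shared denominator is positive and isolate the pole at $u=iK'$ so the statement concerns finite values. Second, and more conceptually, the restriction $-K<x<K$ is what forces $cn(x|m)>0$, whereas the positivity of the two $dn$ factors is automatic for arbitrary real arguments; in particular $c_1=cn(y|m_1)$ may change sign on $(0,2K')$, but this is harmless since $c_1$ enters only through $c_1^2$ in the denominator and never through the sign of a numerator.
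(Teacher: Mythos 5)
Your proof is correct and takes essentially the same route as the paper's: both read the imaginary parts of $cn(u|m)$ and $dn(u|m)$ off the addition formulas \eqref{eqn:complex_cn} and \eqref{eqn:complex_dn}, factor out the common $s\,s_1$, and reduce the claim to positivity of $cn(x|m)$ (from $-K<x<K$) and of $dn(x|m)$, $dn(y|m_1)$ (automatic for real arguments). Your additional remarks on the strict positivity of the shared denominator and the excluded pole at $u=iK'$ are minor refinements of the same argument, not a different approach.
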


\begin{proof}
  By \eqref{eqn:complex_cn} and \eqref{eqn:complex_dn},
  \begin{align*}
    \operatorname{Im}(cn(u|m)) & =\frac{sn(x|m)dn(x|m)sn(y|m_1)dn(y|m_1)}{1-dn^2(x|m)sn^2(y|m_1)} \\
    \operatorname{Im}(dn(u|m)) & =\frac{m\cdot sn(x|m)cn(x|m)sn(y|m_1)}{1-dn^2(y|m)sn^2(y|m_1)}.
  \end{align*}
  So,
  \begin{equation}
    \label{eqn:samesign1}
    \operatorname{sgn}(\operatorname{Im}(cn(u|m)))=\operatorname{sgn}(\operatorname{Im}(dn(u|m)))\cdot\operatorname{sgn}(cn(x|m)\cdot dn(x|m)\cdot dn(y|m_1))
  \end{equation}
  Write $x=F(\phi,m)$. When $-K<x<K$, we have $\phi\in(-\frac{\pi}{2},\frac{\pi}{2})$. So,
  \begin{equation}
    \label{eqn:samesign2}
    cn(x|m)=\cos\phi>0.
  \end{equation}
  By the definition of $dn(u|m)$, for any $x,y\in\mathbb{R}$,
  \begin{equation}
    \label{eqn:samesign3}
    dn(x|m)>0,\;dn(y|m_1)>0.
  \end{equation}
  Applying \eqref{eqn:samesign2} and \eqref{eqn:samesign3} to \eqref{eqn:samesign1}, we conclude that the imaginary part of $cn(u|m)$ and that of $dn(u|m)$ have the same sign.
\end{proof}

The following lemma shows that the minimum of $\operatorname{Re}(\tilde{z})$ in $C_r$ is attained at the inverse of $u=-r$.

\begin{lem}
  \label{thm:min_re}
  Let $\tilde{\Phi}:\tilde{z}\mapsto u$ be defined in \eqref{eqn:phi_re}. Let $\tilde{\Psi}:u\mapsto\tilde{z}$ be its inverse mapping and $C_r$ be the image of $|u|=r>1$ under $\tilde{\Psi}$. Then
  \begin{equation*}
    \min\{\operatorname{Re}(\tilde{z}):\tilde{z}\in C_r\}=\tilde{\Psi}(-r).
  \end{equation*}
\end{lem}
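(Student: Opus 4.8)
The plan is to parametrise $C_r$ by $u=re^{i\theta}$, $\theta\in[0,2\pi]$, pull this back through the three maps of \eqref{eqn:phi_re}, and study $\operatorname{Re}(\tilde z)$ as a function of $\theta$. Since Step~1 is the pure translation \eqref{eqn:phi_re_1}, it suffices to locate the minimiser of $\operatorname{Re}(z)$, where $z$ is expressed through the auxiliary Jacobi variable $\sigma$ by \eqref{eqn:phi_re_2} and $\sigma$ ranges over the interior of $[-K,K]\times[0,2iK']$. I would first note that $\tilde\Phi$ commutes with complex conjugation (the rectangle is symmetric about $\mathbb{R}$), so $\operatorname{Re}(\tilde z(re^{-i\theta}))=\operatorname{Re}(\tilde z(re^{i\theta}))$; hence $\theta\mapsto\operatorname{Re}(\tilde z)$ is even about $\theta=0$, and consequently also about $\theta=\pi$, and the two real points $u=\pm r$ are the natural candidates for the extremes.

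The heart of the argument is a clean formula for $\tfrac{d}{d\theta}\operatorname{Re}(z)$. Differentiating \eqref{eqn:phi_re_2} and using $\tfrac{d}{d\sigma}E(\sigma|m)=dn^2(\sigma|m)$ together with Proposition~\ref{prop:jacobi}(4) gives $\tfrac{dz}{d\sigma}=-\tfrac{i}{\lambda}\big(dn^2(\sigma|m)-m_1\big)=-\tfrac{i}{\lambda}\,m\,cn^2(\sigma|m)$. Carrying the chain rule through $w$ and $u$—using $\tfrac{du}{dw}=\tfrac{2i}{(i-w)^2}$, the identity $u(i-w)^2=-(1+w^2)$, and the simplifications $\tfrac{dw}{d\sigma}=\tfrac{cn(\sigma)(1-dn(\sigma))}{\sqrt m\,sn^2(\sigma)}$ and $1+w^2=\tfrac{2(1-dn(\sigma))}{m\,sn^2(\sigma)}$—I expect everything to collapse to $\tfrac{d\sigma}{d\theta}=-\tfrac{1}{\sqrt m\,cn(\sigma|m)}$, and hence
\begin{equation*}
\frac{dz}{d\theta}=\frac{i\sqrt m}{\lambda}\,cn(\sigma|m),\qquad
\frac{d}{d\theta}\operatorname{Re}(\tilde z)=-\frac{\sqrt m}{\lambda}\,\operatorname{Im}\big(cn(\sigma|m)\big).
\end{equation*}
Thus $\operatorname{Re}(\tilde z)$ is stationary in $\theta$ precisely where $cn(\sigma|m)$ is real, and by the sign information below this happens only on $\operatorname{Re}(\sigma)=0$, so the only critical points are $u=\pm r$.

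It then remains to pin down the sign of $\operatorname{Im}(cn(\sigma|m))$ along the curve. Evaluating $\tfrac{d\sigma}{d\theta}=-1/(\sqrt m\,cn(\sigma))$ at $u=r$—where $\sigma=iy_1$ with $y_1\in(0,K')$ and, by \eqref{eqn:complex_cn}, $cn(iy_1|m)=1/cn(y_1|m_1)>0$—shows $\tfrac{d\sigma}{d\theta}$ is real and negative there, so the upper semicircle $\theta\in(0,\pi)$ is carried into the strip $\operatorname{Re}(\sigma)\in(-K,0)$; since $u$ is non-real for $\theta\in(0,\pi)$, $\sigma(\theta)$ stays strictly in this half. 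By Table~\ref{tab:cn} (consistently with Lemma~\ref{thm:same_sign}) one has $\operatorname{Im}(cn(\sigma|m))>0$ throughout $\operatorname{Re}(\sigma)\in(-K,0)$, $\operatorname{Im}(\sigma)\in(0,2K')$, so $\tfrac{d}{d\theta}\operatorname{Re}(\tilde z)<0$ on $(0,\pi)$. Hence $\operatorname{Re}(\tilde z)$ decreases strictly from $\theta=0$ to $\theta=\pi$, and by the reflection symmetry it increases again on $(\pi,2\pi)$; its minimum over $C_r$ is therefore attained at $\theta=\pi$, i.e. at $\tilde\Psi(-r)$.

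The main obstacle I anticipate is the bookkeeping in the chain-rule simplification leading to $\tfrac{dz}{d\theta}=\tfrac{i\sqrt m}{\lambda}cn(\sigma|m)$, and equally the careful determination of the $\sigma$-region together with the sign of $\operatorname{Im}(cn)$ from the tables. A secondary subtlety is that $\sigma(\theta)$ must detour around the pole of the map at $\sigma=iK'$ (the image of $u=-1\notin C_r$), which is precisely why the relevant $\sigma$ leave the imaginary axis for $\theta\in(0,\pi)$ rather than sliding straight up it.
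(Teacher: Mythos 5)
Your proposal is correct and follows essentially the same route as the paper's proof: the identical chain-rule computation through $\phi_1,\phi_2,\phi_3$ yielding $\frac{d}{d\theta}\operatorname{Re}(\tilde z)=-\frac{\sqrt m}{\lambda}\operatorname{Im}\bigl(cn(\sigma|m)\bigr)$ (the paper's \eqref{eqn:derivative_8}), followed by a sign analysis of $\operatorname{Im}(cn)$ on the upper and lower semicircles. The only divergence is the final sign step---the paper reads the sign directly from $dn(\sigma|m)=\frac{1}{2}\left(u+\frac{1}{u}\right)$ with $u=re^{i\theta}$ and transfers it to $cn$ via Lemma \ref{thm:same_sign}, whereas you track $\sigma(\theta)$ into the half-strip $\operatorname{Re}(\sigma)\in(-K,0)$ by a continuity argument and invoke Table \ref{tab:cn}---an equivalent, if slightly more laborious, way to reach the same conclusion (note only that your parenthetical identification of the pole $\sigma=iK'$ with $u=-1$ is off: $\sigma=iK'$ corresponds to $u=\infty$, while $u=-1$ corresponds to $\sigma=2iK'$).
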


\begin{proof}
  By \eqref{eqn:phi_re_1},
  \begin{equation}
    \label{eqn:derivative_1}
    \frac{d\tilde{z}}{dz}=1.
  \end{equation}
  Recall the definition $E(\sigma|m)=\int_0^\sigma dn^2(z|m)dz$, the identities $sn^2+cn^2\equiv1$ and $m\cdot sn^2+dn^2\equiv1$, we have from \eqref{eqn:phi_re_2} that
  \begin{equation}
    \label{eqn:derivative_2}
    \frac{dz}{d\sigma}=-\frac{i}{\lambda}\{dn^2-(1-m)\}=-\frac{i}{\lambda}\{m-m\cdot sn^2\}=-\frac{i}{\lambda}\cdot m\cdot cn^2.
  \end{equation}
  Note that By \eqref{eqn:derivative_sn} and \eqref{eqn:derivative_dn}, we have $\frac{d(dn)}{d\sigma}=-m\cdot sn\cdot cn$ and $\frac{d(sn)}{d\sigma}=cn\cdot dn$. Then by \eqref{eqn:phi_re_2},
  \begin{align}
    \nonumber
    \frac{dw}{d\sigma}
    & =\frac{-(-m\cdot sn\cdot cn)\cdot\sqrt{m}\cdot cn-(1-dn)\cdot\sqrt{m}\cdot cn\cdot dn}{m\cdot sn^2} \\
    \nonumber
    & =\frac{\sqrt{m}\cdot cn\cdot(m\cdot sn^2-dn+dn^2)}{m\cdot sn^2} \\
    \nonumber
    & =\frac{\sqrt{m}\cdot cn\cdot(1-dn)}{1-dn^2} \\
    \label{eqn:derivative_3}
    & =\frac{\sqrt{m}\cdot cn}{1+dn}
  \end{align}
  By \eqref{eqn:phi_re_3}, $w=i\frac{u-1}{u+1}$ and then
  \begin{equation}
    \label{eqn:derivative_4}
    \frac{dw}{du}=\frac{2i}{(u+1)^2}.
  \end{equation}
  Combining \eqref{eqn:derivative_1}, \eqref{eqn:derivative_2}, \eqref{eqn:derivative_3} and \eqref{eqn:derivative_4}, we have
  \begin{align}
    \nonumber
    \frac{d\tilde{z}}{du}
    & =\frac{d\tilde{z}}{dz}\cdot\frac{dz}{d\sigma}\cdot\frac{d\sigma}{dw}\cdot\frac{dw}{du} \\
    \nonumber
    & =-\frac{i}{\lambda}\cdot m\cdot cn^2\cdot\frac{1+dn}{\sqrt{m}\cdot cn}\cdot\frac{2i}{(u+1)^2} \\
    \label{eqn:derivative_5}
    & =\frac{2\sqrt{m}\cdot cn(1+dn)}{\lambda(u+1)^2}.
  \end{align}
   \eqref{eqn:phi_re_3} also implies
  \begin{equation}
  \label{eqn:dn_u-1}
    w^2=-\frac{(u-1)^2}{(u+1)^2}.
  \end{equation}
  On the other hand, by \eqref{eqn:phi_re_2},
  \begin{equation}
  \label{eqn:dn_u-2}
    w^2=\frac{(1-dn)^2}{m\cdot sn^2}=\frac{(1-dn)^2}{1-dn^2}=\frac{1-dn}{1+dn}.
  \end{equation}
  So,
  \begin{equation}
    \label{eqn:dn_u}
    dn=\frac{1-w^2}{1+w^2}=\frac{(u+1)^2+(u-1)^2}{(u+1)^2-(u-1)^2}=\frac{1}{2}\left(u+\frac{1}{u}\right)
  \end{equation}
  and hence
  \begin{equation*}
    1+dn=\frac{(u+1)^2}{2u}.
  \end{equation*}
  Substituting this into \eqref{eqn:derivative_5}, we have
  \begin{equation}
    \label{eqn:derivative_6}
    \frac{d\tilde{z}}{du}=\frac{\sqrt{m}\cdot cn}{\lambda u}.
  \end{equation}
  Now let $u$ be on the circle of radius $r$ on the complex $u$-plane. Then we can write $u=re^{i\theta}$ where $-\pi<\theta\leq\pi$. Hence
  \begin{equation}
    \label{eqn:derivative_7}
    \frac{du}{d\theta}=re^{i\theta}\cdot i=iu.
  \end{equation}
  Treating $\tilde{z}\in C_r$ as a function of $\theta$, we have from \eqref{eqn:derivative_6} and \eqref{eqn:derivative_7} that
  \begin{equation}
    \label{eqn:derivative_8}
    \frac{d\tilde{z}}{d\theta}=\frac{i\sqrt{m}}{\lambda}\cdot cn(\sigma|m).
  \end{equation}
  So
  \begin{equation*}
    \frac{d(\operatorname{Re}(\tilde{z}))}{d\theta}=\operatorname{Re}\left(\frac{d\tilde{z}}{d\theta}\right)=-\frac{\sqrt{m}}{\lambda}\operatorname{Im}(cn(\sigma|m)).
  \end{equation*}
  From \eqref{eqn:dn_u} and $u=r\cos\theta+ir\sin\theta$, we write $dn(\sigma|m)$ as a function of $\theta$,
  \begin{equation*}
    dn(\sigma|m)=\frac{1}{2}\left(r+\frac{1}{r}\right)\cos\theta+\frac{i}{2}\left(r-\frac{1}{r}\right)\sin\theta.
  \end{equation*}
  So $\operatorname{Im}(dn(\sigma|m))<0$ when $\theta\in(-\pi,0)$, and $\operatorname{Im}(dn(\sigma|m))>0$ when $\theta\in(0,\pi]$. By Lemma \ref{thm:same_sign}, the imaginary part of $cn(\sigma|m)$ always has the same sign as that of $dn(\sigma|m)$. Thus, by \eqref{eqn:derivative_8}, $\frac{d(\operatorname{Re}(\tilde{z}))}{d\theta}>0$ when $\theta\in(-\pi,0)$, and $\frac{d(\operatorname{Re}(\tilde{z}))}{d\theta}<0$ when $\theta\in(0,\pi]$. The minimum value of $\operatorname{Re}(\tilde{z})$ is attained when $\theta=\pi$, i.e., $u=-r$.
\end{proof}

Next, we find the explicit form for $\tilde{\Psi}(-r)$ in Lemma \ref{thm:min_re}.

\begin{lem}
  \label{thm:psi_r_negative}
  Let $\tilde{\Phi}:\tilde{z}\mapsto u$ be the conformal mapping from the exterior of the rectangle $[a,b]\times[-c,c]$ onto the exterior of the unit disk, as
  defined in \eqref{eqn:phi_re}, and let $\tilde{\Psi}:u\mapsto\tilde{z}$ be its inverse. Then for any $r>1$, we have
  \begin{equation}
    \label{eqn:lemma-psi}
    \tilde{\Psi}(-r)=a-\frac{1}{\lambda}\int_0^{\frac{1}{2}\left(r-\frac{1}{r}\right)}\frac{\sqrt{m+t^2}}{\sqrt{1+t^2}}dt,
  \end{equation}
  where the parameters $m$ is determined by \eqref{eqn:m_lambda} and $\lambda$ is the ratio in \eqref{eqn:m_lambda}.
\end{lem}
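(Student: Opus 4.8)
The plan is to compute $\tilde\Psi(-r)$ by integrating the derivative $\frac{d\tilde z}{du}$ found in the proof of Lemma \ref{thm:min_re}, namely \eqref{eqn:derivative_6}, along the negative real axis of the $u$-plane from a convenient base point to $u=-r$. Concretely, I would write
\[
\tilde\Psi(-r)=\tilde\Psi(-1)+\int_{-1}^{-r}\frac{\sqrt{m}\,cn(\sigma|m)}{\lambda u}\,du ,
\]
so that the task splits into (i) evaluating the base value $\tilde\Psi(-1)$, (ii) locating $\sigma$ and fixing the sign of $cn(\sigma|m)$ along this path, and (iii) carrying out the integral. For the base value I would use \eqref{eqn:dn_u}, which gives $dn(\sigma|m)=\frac12(u+1/u)$, so $u=-1$ corresponds to $dn(\sigma|m)=-1$, i.e.\ $\sigma=2iK'$. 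Substituting $\sigma=2iK'$ into the first relation of \eqref{eqn:phi_re_2} together with $\tilde z=z+\frac{a+b}{2}$, and using $E(2iK'|m)=2i(K'-E')$ from \eqref{eqn:17.4.5} (with $E(0|m)=0$), I obtain $\tilde\Psi(-1)=b-\frac{2}{\lambda}(E'-mK')$; the defining relation \eqref{eqn:m_lambda}, which reads $E'-mK'=\lambda\alpha$ with $\alpha=\frac{b-a}{2}$, then collapses this to $\tilde\Psi(-1)=b-2\alpha=a$.

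Next I would pin down $\sigma$ on the segment $u=-s$, $s\in[1,r]$. Here $dn(\sigma|m)=-\frac12(s+1/s)$ is real and $\le-1$, and using $dn(iy|m)=dn(y|m_1)/cn(y|m_1)$ (the $x=0$ case of \eqref{eqn:complex_dn}) together with the reflection $dn(2iK'-\sigma|m)=-dn(\sigma|m)$ from \eqref{eqn:dn_sigma_negative}, one checks that such real values $\le-1$ are attained precisely on the purely imaginary segment $\sigma=i\eta$ with $\eta\in(K',2K')$, which lies in the admissible range $[-K,K]\times[0,2iK']$. On this segment the $x=0$ case of \eqref{eqn:complex_cn} gives $cn(iy|m)=1/cn(y|m_1)>0$, whence $cn(\sigma|m)=cn(2iK'-iy|m)=-cn(iy|m)<0$ by the reflection in Proposition \ref{prop:jacobi}(6). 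Consequently, from $m\,cn^2=dn^2-m_1$ (Proposition \ref{prop:jacobi}(4)) and the negativity of $cn$, I may replace $\sqrt{m}\,cn(\sigma|m)=-\sqrt{dn^2(\sigma|m)-m_1}$ in the integrand.

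Finally I would evaluate the integral with the substitution $t=\frac12(s-1/s)$, chosen so that $t=0$ at $s=1$ and the upper limit becomes $\frac12(r-1/r)$. Since $dn(\sigma|m)=-\frac12(s+1/s)$, a direct computation gives $dn^2=1+t^2$, hence $dn^2-m_1=t^2+m$; moreover $\frac{du}{u}=\frac{ds}{s}=\frac{dt}{\sqrt{1+t^2}}$ because $\sqrt{1+t^2}=\frac12(s+1/s)$. Substituting everything turns the factor $\frac{\sqrt{m}\,cn(\sigma|m)}{\lambda u}\,du$ into $-\frac{1}{\lambda}\frac{\sqrt{m+t^2}}{\sqrt{1+t^2}}\,dt$, which, together with $\tilde\Psi(-1)=a$, delivers exactly \eqref{eqn:lemma-psi}.

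The main obstacle I expect is step (ii): correctly placing $\sigma$ in the fundamental rectangle and fixing the sign of $cn(\sigma|m)$, since the path runs along $\operatorname{Re}(\sigma)=0$, the boundary separating the sign regions recorded in Tables \ref{tab:cn} and \ref{tab:dn}. Handling this through the explicit reflection identities of Proposition \ref{prop:jacobi} rather than the sign tables, and carefully verifying the base value $\tilde\Psi(-1)=a$ via \eqref{eqn:17.4.5} and \eqref{eqn:m_lambda}, are the steps demanding the most care; the remaining change of variables is routine.
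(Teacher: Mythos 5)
Your proposal is correct, but it takes a genuinely different route from the paper. The paper proceeds in two algebraic stages: it first computes $\Psi(r)$ for the \emph{positive} real point $u=r$ (there the auxiliary variable $\sigma$ lies on the segment $(0,iK')$, where the sign tables directly give that $sn$ is purely imaginary and $cn$, $dn$ are positive), using the substitution $t=-i\sqrt{m}\,sn(s|m)$ inside the integral $E(\sigma|m)-m_1\sigma=\int_0^\sigma m\,cn^2(s|m)\,ds$; it then proves the odd symmetry $\Psi(-r)=-\Psi(r)$ purely algebraically from the reflection $\tilde{\sigma}=2iK'-\sigma$ together with \eqref{eqn:17.4.5} and \eqref{eqn:m_lambda}, and finally undoes the shift $\phi_1$. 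You instead work directly on the negative real axis: you anchor at the boundary value $\tilde{\Psi}(-1)=a$ (computed from $\sigma=2iK'$ with the very same identities \eqref{eqn:17.4.5} and \eqref{eqn:m_lambda}) and integrate the derivative \eqref{eqn:derivative_6} from Lemma \ref{thm:min_re} outward, parametrizing by $u=-s$ and substituting $t=\tfrac12(s-1/s)$. Your computations check out: $dn(i\eta|m)=dn(\eta|m_1)/cn(\eta|m_1)$ and the reflection identities do place $\sigma$ on $i(K',2K')$ with $cn(\sigma|m)<0$, the identity $m\,cn^2=dn^2-m_1$ gives $\sqrt{m}\,cn=-\sqrt{m+t^2}$, and $\frac{du}{u}=\frac{dt}{\sqrt{1+t^2}}$, so the pieces assemble into \eqref{eqn:lemma-psi}. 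What the two routes buy: the paper's symmetry trick keeps all elliptic-function sign analysis inside the segment $(0,iK')$, where the tabulated signs apply without ambiguity, and never needs boundary values of the map; your route is more direct and geometric (it traces the leftmost point of $C_r$ continuously from the midpoint $a$ of the left edge), reuses the derivative already computed for Lemma \ref{thm:min_re}, but in exchange must justify the continuous extension of $\tilde{\Psi}$ to the boundary point $u=-1$ (harmless here, since $cn(\sigma|m)\to-1$ as $\sigma\to2iK'$, so the integrand stays bounded) and must fix the sign of $cn$ on the upper imaginary segment, which is exactly the delicacy the paper's odd-symmetry argument is designed to avoid. You correctly identified this as the critical step and handled it with the reflection identities rather than the sign tables, which is sound.
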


\begin{proof}
  Recall that $\tilde{\Phi}=\phi_3\circ\phi_2\circ\phi_1$ with $\phi_1$, $\phi_2$ and $\phi_3$ the three conformal mappings defined in \eqref{eqn:phi_re_1}, \eqref{eqn:phi_re_2} and \eqref{eqn:phi_re_3}. Let
  \begin{equation}
    \label{eqn:phi_no_tilde}
    \Phi:=\phi_3\circ\phi_2
  \end{equation}
  and $\Psi$ be its inverse. Then obviously
  \begin{equation}
    \label{eqn:psi_r_negative}
    \tilde{\Psi}(-r)=\phi_1^{-1}\circ\Psi(-r)
  \end{equation}
  The proof of this lemma consists of two parts. First, we prove that for any $r>1$,
  \begin{equation}
    \label{eqn:psi_r}
    \Psi(r)=\alpha+\frac{1}{\lambda}\int_0^{\frac{1}{2}\left(r-\frac{1}{r}\right)}\frac{\sqrt{m+t^2}}{\sqrt{1+t^2}}dt.
  \end{equation}
  By the same equation \eqref{eqn:dn_u} that was derived from \eqref{eqn:phi_re_2} and \eqref{eqn:phi_re_3}, $w$ in the map can be eliminated to define $\Phi$: $z\longleftrightarrow\sigma\longleftrightarrow u$ through the auxiliary parameter $\sigma$ as
  \begin{equation}
    \label{eqn:z_sigma_dn_u}
    \left\{
      \begin{aligned}
        z(\sigma) & =\alpha-\frac{i}{\lambda}\{E(\sigma|m)-m_1\sigma\} \\
        dn(\sigma|m) & =\frac{1}{2}\left(u+\frac{1}{u}\right)
      \end{aligned}
    \right.
  \end{equation}
  To compute $\Psi(r)$, set $u=r$ above. Then the corresponding $\sigma$ satisfies
  \begin{equation}
    \label{eqn:dn_r}
    dn(\sigma|m)=\frac{1}{2}\left(r+\frac{1}{r}\right)>1.
  \end{equation}
  By Table \ref{tab:dn}, $\sigma\in\mathbb{C}$ is on the line segment connecting $0$ and $iK'$. Let
  \begin{equation}
    \label{eqn:substitution_t_z}
    t=-i\sqrt{m}\cdot sn(s|m),
  \end{equation}
  where $s$ is on the line segment connecting $0$ and $\sigma$. By Tables \ref{tab:sn}, \ref{tab:cn} and \ref{tab:dn}, $sn(s|m)$ is purely imaginary with positive imaginary part, and $cn(s|m)$ and $dn(s|m)$ are both real and positive. Then
  \begin{align*}
    & m\cdot sn^2(s|m)=-t^2, \\
    & m\cdot cn^2(s|m)=m-m\cdot sn^2(s|m)=m+t^2\Longrightarrow\sqrt{m}\cdot cn(s|m)=\sqrt{m+t^2}, \\
    & dn^2(s|m)=1-m\cdot sn^2(s|m)=1+t^2\Longrightarrow dn(s|m)=\sqrt{1+t^2}.
  \end{align*}
  By \eqref{eqn:substitution_t_z} and \eqref{eqn:derivative_sn},
  \begin{equation*}
    dt=-i\sqrt{m}\cdot cn(s|m)\cdot dn(s|m)ds,
  \end{equation*}
  then
  \begin{equation*}
    ds=\frac{dt}{-i\sqrt{m}\cdot cn(s|m)\cdot dn(s|m)}=\frac{dt}{-i\sqrt{m+t^2}\sqrt{1+t^2}}.
  \end{equation*}
  By \eqref{eqn:dn_r},
  \begin{equation*}
    m\cdot sn^2(\sigma|m)=1-dn^2(\sigma|m)=-\frac{1}{4}\left(r-\frac{1}{r}\right)^2,
  \end{equation*}
  then
  \begin{equation*}
    \sqrt{m}\cdot sn(\sigma|m)=\frac{i}{2}\left(r-\frac{1}{r}\right).
  \end{equation*}
  Thus, as $s$ moves along the positive imaginary axis from $0$ to $\sigma$, $t$ as defined by \eqref{eqn:substitution_t_z} moves along the positive real axis from $0$ to $\frac{1}{2}\left(r-\frac{1}{r}\right)$. Then
  \begin{align*}
    \Psi(r)
    & = z(\sigma) =\alpha-\frac{i}{\lambda}\{E(\sigma|m)-m_1\sigma\} \\
    & =\alpha-\frac{i}{\lambda}\left\{\int_0^\sigma dn^2(s|m)ds-m_1\sigma\right\} \\
    & =\alpha-\frac{i}{\lambda}\int_0^\sigma m\cdot cn^2(s|m)ds \\
    & =\alpha-\frac{i}{\lambda}\int_0^{\frac{1}{2}\left(r-\frac{1}{r}\right)}(m+t^2)\frac{dt}{-i\sqrt{m+t^2}\sqrt{1+t^2}} \\
    & =\alpha+\frac{1}{\lambda}\int_0^{\frac{1}{2}\left(r-\frac{1}{r}\right)}\frac{\sqrt{m+t^2}}{\sqrt{1+t^2}}dt.
  \end{align*}
  This completes the proof of the first part \eqref{eqn:psi_r}.

  We next prove for any $r>1$,
  \begin{equation}
    \label{eqn:psi_r_odd}
    \Psi(-r)=-\Psi(r).
  \end{equation}
  Let $\sigma$ and $\tilde{\sigma}$ be the auxiliary parameters in \eqref{eqn:z_sigma_dn_u} corresponding to $r$ and $-r$ respectively. Then
  \begin{equation*}
    dn(\tilde{\sigma}|m)=\frac{1}{2}\left(-r+\frac{1}{-r}\right)=-\frac{1}{2}\left(r+\frac{1}{r}\right)=-dn(\sigma|m).
  \end{equation*}
  By \eqref{eqn:dn_sigma_negative}, $\tilde{\sigma}=2iK'-\sigma$. Thus, using \eqref{eqn:17.4.5} and \eqref{eqn:m_lambda}, we get
  \begin{align*}
    \Psi(-r)& =z(\tilde{\sigma})=
    \alpha-\frac{i}{\lambda}\{E(2iK'-\sigma|m)-m_1(2iK'-\sigma)\} \\
    \nonumber
    & =\alpha-\frac{i}{\lambda}\{2i(K'-E')-E(\sigma|m)-2m_1iK'+m_1\sigma\} \\
    \nonumber
    & =\alpha-\frac{i}{\lambda}\{-2i(E'-mK')-[E(\sigma|m)-m_1\sigma]\} \\
    \nonumber
    & =\alpha-\frac{i}{\lambda}\{-2i\cdot\lambda\alpha-[E(\sigma|m)-m_1\sigma]\} \\
    & =-\alpha+\frac{i}{\lambda}\{E(\sigma|m)-m_1\sigma\} =-z(\sigma)=-\Psi(r).
  \end{align*}
  Finally, applying $\phi_1^{-1}$ to $\Psi(-r)$ as in \eqref{eqn:psi_r_negative} and noting that $\alpha=\frac{b-a}{2}$, \eqref{eqn:lemma-psi} is proved.
\end{proof}

Finally, we show that $\tilde{\Phi}$ can be normalized according to \eqref{eqn:normalization}.
\begin{lem}
  \label{thm:normalization}
  Let $\lambda$ be the ratio in \eqref{eqn:m_lambda}. We have
\begin{align*}
  \lim_{\tilde{z}\to\infty}\frac{\tilde{\Phi}(\tilde{z})}{\tilde{z}}=2\lambda>0.
\end{align*}
\end{lem}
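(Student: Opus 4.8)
The plan is to reduce the statement to a derivative computation at infinity. Since $\tilde{\Phi}$ maps the exterior of the rectangle conformally onto the exterior of the unit disk and fixes the point at infinity, its inverse $\tilde{\Psi}$ is analytic at $u=\infty$ and admits a Laurent expansion $\tilde{\Psi}(u)=\gamma u+\gamma_0+\gamma_{-1}/u+\cdots$ with $\gamma\neq0$. Differentiating this expansion term by term gives $\gamma=\lim_{u\to\infty}\tilde{\Psi}'(u)$, and since $\lim_{\tilde{z}\to\infty}\tilde{\Phi}(\tilde{z})/\tilde{z}=1/\gamma$, it suffices to evaluate $\lim_{u\to\infty}\frac{d\tilde{z}}{du}$.

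I would start from the formula already established in the proof of Lemma \ref{thm:min_re}, namely \eqref{eqn:derivative_6},
\begin{equation*}
  \frac{d\tilde{z}}{du}=\frac{\sqrt{m}\,cn(\sigma|m)}{\lambda u},
\end{equation*}
together with \eqref{eqn:dn_u}, $dn(\sigma|m)=\frac{1}{2}\left(u+\frac{1}{u}\right)$. The next step is to express $\sqrt{m}\,cn$ in terms of $dn$, and hence in terms of $u$. Combining the Jacobi identities $sn^2+cn^2=1$ and $m\cdot sn^2+dn^2=1$ from Proposition \ref{prop:jacobi} yields $m\cdot cn^2=dn^2-m_1$, and substituting the expression for $dn$ gives $(\sqrt{m}\,cn)^2=\frac{1}{4}\left(u+\frac{1}{u}\right)^2-m_1$.

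To finish, I would let $u=r\to+\infty$ along the positive real axis. Along this ray, the proof of Lemma \ref{thm:psi_r_negative} shows that the auxiliary parameter $\sigma$ lies on the segment joining $0$ and $iK'$ and that $cn(\sigma|m)$ and $dn(\sigma|m)$ are real and positive; this selects the branch $\sqrt{m}\,cn(\sigma|m)=+\sqrt{\frac{1}{4}\left(u+\frac{1}{u}\right)^2-m_1}$. Then
\begin{equation*}
  \frac{d\tilde{z}}{du}=\frac{1}{\lambda u}\sqrt{\frac{1}{4}\left(u+\frac{1}{u}\right)^2-m_1}\;\xrightarrow[u\to\infty]{}\;\frac{1}{2\lambda},
\end{equation*}
so $\gamma=\frac{1}{2\lambda}$ and $\lim_{\tilde{z}\to\infty}\tilde{\Phi}(\tilde{z})/\tilde{z}=2\lambda$, which is positive since both numerator and denominator of the ratio $\lambda$ in \eqref{eqn:m_lambda} are positive by Lemma \ref{lem:unique}.

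The main obstacle is the sign determination: the identity $m\cdot cn^2=dn^2-m_1$ fixes $\sqrt{m}\,cn$ only up to sign, and the wrong branch would produce $-2\lambda$ instead of $+2\lambda$. The key observation that resolves this cleanly is that $\gamma$ is the \emph{constant} leading Laurent coefficient, so the limit of $\frac{d\tilde{z}}{du}$ may be computed along any convenient path to infinity; choosing the real ray $u=r\to+\infty$, where the sign analysis of Lemma \ref{thm:psi_r_negative} already forces $cn(\sigma|m)>0$, removes the ambiguity. A secondary point to state explicitly is the identification of $\lim_{u\to\infty}\tilde{\Psi}'(u)$ with the leading Laurent coefficient, which follows from the analyticity of $\tilde{\Psi}$ at infinity.
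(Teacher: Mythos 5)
Your proof is correct, and it shares the paper's overall skeleton --- both reduce the lemma to evaluating $\lim_{u\to\infty}\frac{d\tilde z}{du}$ starting from \eqref{eqn:derivative_6} and \eqref{eqn:dn_u}, and then invert --- but the decisive step is executed genuinely differently. The paper substitutes $\sqrt{m}\,sn(\sigma|m)=\frac{i}{2}\left(u-\frac{1}{u}\right)$ to get \eqref{eqn:dz_du}, and then evaluates $\lim_{\sigma\to iK'}\frac{cn(\sigma|m)}{sn(\sigma|m)}$ at the pole $\sigma=iK'$ through the derivative relation $L=-1/L$; this pins down $L$ only up to sign ($L=\pm i$), and the choice $L=-i$ is asserted without explicit justification. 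You instead eliminate $cn$ algebraically via $m\,cn^2=dn^2-m_1$, which produces the same $\pm$ ambiguity, but you resolve it concretely: along the ray $u=r>1$ the sign analysis already carried out in the proof of Lemma \ref{thm:psi_r_negative} (via the sign tables) forces $cn(\sigma|m)>0$, and the path-independence of $\lim_{u\to\infty}\tilde\Psi'(u)$ --- which you justify by the Laurent expansion of $\tilde\Psi$ at infinity --- lets the computation along that single ray determine the leading coefficient $\gamma=\frac{1}{2\lambda}$. What your route buys is a rigorous sign determination that the paper leaves implicit; the small price is having to state explicitly the (standard) fact that the derivative of a univalent map fixing $\infty$ tends to its leading Laurent coefficient along every path, a fact the paper also uses tacitly when it passes from $\frac{du}{d\tilde z}\to2\lambda$ to $\frac{\tilde\Phi(\tilde z)}{\tilde z}\to2\lambda$.
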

\begin{proof}
First, by \eqref{eqn:dn_u} and $m\cdot sn^2(\sigma|m)+dn^2(\sigma|m)=1$, we have $\sqrt{m}\cdot sn(\sigma|m)=\frac{i}{2}\left(u-\frac{1}{u}\right)$. Applying it to \eqref{eqn:derivative_6}, we have
\begin{align}
  \label{eqn:dz_du}
  \frac{d\tilde{z}}{du}=\frac{i}{2\lambda}\cdot\frac{cn(\sigma|m)}{sn(\sigma|m)}\left(1-\frac{1}{u^2}\right).
\end{align}
As $\tilde{z}\to\infty$, $\sigma\to iK'$ and $u\to\infty$ (see \cite[p. 178]{dictionary}). Since
\begin{align*}
  \lim_{\sigma\to iK'}\frac{cn(\sigma|m)}{sn(\sigma|m)}
  = \lim_{\sigma\to iK'}\frac{cn'(\sigma|m)}{sn'(\sigma|m)}
  = \lim_{\sigma\to iK'}\frac{-sn(\sigma|m)dn(\sigma|m)}{cn(\sigma|m)dn(\sigma|m)}
  = -\left(\lim_{\sigma\to iK'}\frac{cn(\sigma|m)}{sn(\sigma|m)}\right)^{-1},
\end{align*}
we have $\lim\limits_{\sigma\to iK'}\frac{cn(\sigma|m)}{sn(\sigma|m)}=-i$. Applying it to \eqref{eqn:dz_du}, $\frac{d\tilde{z}}{du}\to\frac{1}{2\lambda}$ or $\frac{du}{d\tilde{z}}\to {2\lambda}$ as $\tilde{z}\to\infty$. Then $\frac{\tilde{\Phi}(\tilde{z})}{\tilde{z}}\to2\lambda$ as $\tilde{z}\to\infty$. $\lambda>0$ follows from Lemma \ref{lem:unique}.
\end{proof}


\section{{\em A priori} error bound for non-Hermitian matrices}
\label{sec:apriori_nonhermitian}

In this section, we derive new {\em a priori} error bounds for the Arnoldi approximations of $e^{-\tau A}v$. We shall bound the error in terms of the following spectral information of $A$:
\begin{equation}
  \label{eqn:definition_abc}
  \left\{
    \begin{aligned}
      a & =\min_i\left\{\lambda_i\left(\frac{A+A^*}{2}\right)\right\}=\nu(A) \\
      b & =\max_i\left\{\lambda_i\left(\frac{A+A^*}{2}\right)\right\}=\mu(A) \\
      c & =\max_i\left\{\left|\lambda_i\left(\frac{A-A^*}{2}\right)\right|\right\}
    \end{aligned}
  \right.
\end{equation}
where $\lambda_i(M)$ ($1\leq i\leq n$) are the eigenvalues of $M$. These three numbers provide a region bounding $W(A)$, the field of values of $A$, i.e. $W(A)$ is contained in the rectangle $[a,b]\times[-c,c]$.

We shall study the convergence of the Arnoldi method through bounding $|h(t)|$ (the $(k,1)$ entry of $e^{-tH_k}$) in the {\em a posteriori} bound of \S\ref{sec:aposteriori} as in \cite{ye}. As mentioned before, analytic functions of banded matrices have a decay property, i.e. their entries decreases away from the main diagonal. Sharp decay bounds were originally derived by Benzi and Golub \cite{ye2} for Hermitian matrices; see \cite{bebr13,besi15} and the references contained therein for some further improvements. Generalizations to the non-Hermitian case, which is applicable to the Hessenberg matrix $H_k$ here, have been obtained by Benzi and Razouk \cite{ye3} and Benzi and Boito \cite{bebo14}. Specifically, for non-Hermitian matrices, the Faber polynomial approximation and the conformal mappings on a circular region containing the field of value have been introduced in \cite{bebo14,ye3} to bound the decay rate. Here we will follow the same approach of \cite{bebo14,ye3}, but we will use the conformal mapping that is constructed in \S\ref{sec:conformal_mapping} so as to utilize a more precise region $[a,b]\times[-c,c]$ that encloses the field of values. By using a smaller bounding region, a stronger approximation result and hence a stronger bound are obtained as follows.

\begin{thm}
  \label{thm:bound_ht_re}
  Let $H_k$ be a $k$-by-$k$ upper Hessenberg matrix and let $h(t)=e_k^Te^{-tH_k}e_1$ be the $(k,1)$ entry of the matrix $e^{-tH_k}$. Let $a_k=\min_i\left\{\lambda_i\left(\frac{H_k+H_k^*}{2}\right)\right\}$, $b_k=\max_i\left\{\lambda_i\left(\frac{H_k+H_k^*T}{2}\right)\right\}$ and $c_k=\max_i\left\{\left|\lambda_i\left(\frac{H_k-H_k^*}{2}\right)\right|\right\}$. Then for any $q$ with $0<q<1$,
  \begin{equation}
    \label{eqn:bound_ht_re}
    |h(t)|\leq2\,Q\,\frac{q^{k-1}}{1-q} e^{-t\tilde{z}},
  \end{equation}
  where $Q=11.08$,
  \begin{equation}
    \label{eqn:tilde_z}
    \tilde{z}=a_k-\frac{1}{\lambda}\int_0^{\frac{1}{2}\left(\frac{1}{q}-q\right)}\frac{\sqrt{m+s^2}}{\sqrt{1+s^2}}ds,
  \end{equation}
  and the parameters $m$ is determined from $a_k$, $b_k$, $c_k$ by \eqref{eqn:m_lambda} and $\lambda$ is the ratio  in \eqref{eqn:m_lambda}.
\end{thm}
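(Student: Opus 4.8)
The plan is to express $h(t)$ as a polynomial approximation error and then control that error with the Faber machinery of Section~\ref{sec:preliminaries} together with the conformal map built in Section~\ref{sec:conformal_mapping}. The starting observation is that $h(t)=e_k^Te^{-tH_k}e_1$ is insensitive to low-degree polynomial corrections: since $H_k$ is upper Hessenberg it has lower bandwidth one, so $H_k^j$ has lower bandwidth $j$ and hence $(H_k^j)_{k,1}=0$ whenever $j<k-1$. Consequently $(p(H_k))_{k,1}=0$ for every polynomial $p$ of degree at most $k-2$, and therefore $h(t)=e_k^T\bigl(e^{-tH_k}-p(H_k)\bigr)e_1$ for every such $p$. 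Bounding a single entry by the operator norm gives $|h(t)|\le\|e^{-tH_k}-p(H_k)\|$ for any $p$ of degree $\le k-2$.

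I would then take $p=\Pi_{N}$ with $N=k-2$, the partial Faber sum \eqref{eqn:faber_sum} of $f(z)=e^{-tz}$ on the rectangle $D=[a_k,b_k]\times[-c_k,c_k]$, and estimate the right-hand side in two stages. First, to pass from $\|f(H_k)-\Pi_N(H_k)\|$ to a scalar supremum I would invoke Crouzeix's theorem: for any matrix $M$ and any function $g$ analytic on a neighbourhood of its field of values $W(M)$, one has $\|g(M)\|\le Q\max_{z\in W(M)}|g(z)|$ with $Q=11.08$. Applying this with $M=H_k$, $g=f-\Pi_N$, and using $W(H_k)\subseteq D$ (from the definitions of $a_k,b_k,c_k$), yields $|h(t)|\le Q\,\|f-\Pi_N\|_\infty$, the uniform norm being taken over a region $I(C_r)\supseteq D$.

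Second, I would bound $\|f-\Pi_N\|_\infty$ by the adapted Faber estimate \eqref{eqn:rbound}. The rectangle $D$ is convex, so its total rotation is $V=2\pi$ and $V/\pi=2$; with $N=k-2$ and a level parameter $r>1$ this gives $\|f-\Pi_N\|_\infty\le 2\,\widehat M(r)\,\frac{(1/r)^{k-1}}{1-1/r}$, where $\widehat M(r)=\max_{z\in C_r}|e^{-tz}|$ and $C_r$ is the inverse image of $|u|=r$ under the map $\tilde\Phi$ of Section~\ref{sec:conformal_mapping}, which is admissible after the normalization of Lemma~\ref{thm:normalization}. The crucial computation is $\widehat M(r)$: since $|e^{-tz}|=e^{-t\operatorname{Re}(z)}$ and $t\ge0$ in the \emph{a posteriori} bound, the maximum is attained at the point of $C_r$ of smallest real part, which by Lemma~\ref{thm:min_re} is $\tilde\Psi(-r)$ and by Lemma~\ref{thm:psi_r_negative} equals $a_k-\frac1\lambda\int_0^{\frac12(r-1/r)}\frac{\sqrt{m+s^2}}{\sqrt{1+s^2}}\,ds$. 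Finally I would set $r=1/q$, so that $1/r=q$, $(1/r)^{k-1}=q^{k-1}$, $1-1/r=1-q$, and the exponent $\tilde\Psi(-r)$ becomes precisely the $\tilde z$ of \eqref{eqn:tilde_z}; combining the three displays gives $|h(t)|\le 2Q\,\frac{q^{k-1}}{1-q}\,e^{-t\tilde z}$.

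The steps most likely to need care are structural rather than computational. The genuinely external input is Crouzeix's bound, which supplies the constant $Q=11.08$ and is the single place where the non-Hermiticity of $H_k$ is absorbed; everything else specializes cleanly because $D$ is a convex rectangle. The subtlest internal point is the evaluation of $\widehat M(r)$: it rests on $t\ge0$ and on the identification of the leftmost point of the level curve $C_r$ as $\tilde\Psi(-r)$, for which Lemmas~\ref{thm:same_sign}, \ref{thm:min_re} and \ref{thm:psi_r_negative} were prepared precisely. One must also confirm that $m$ and $\lambda$ here are those determined from $a_k,b_k,c_k$ through \eqref{eqn:m_lambda} (with $\alpha=(b_k-a_k)/2$, $\beta=c_k$) and that $\lambda>0$ by Lemma~\ref{lem:unique}, so that $\tilde z$ and $e^{-t\tilde z}$ are well defined.
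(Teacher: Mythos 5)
Your proposal is correct and follows essentially the same route as the paper's proof: kill the $(k,1)$ entry of any degree-$(k-2)$ polynomial of the Hessenberg matrix, insert the Faber partial sum $\Pi_{k-2}$ of $e^{-tz}$ on the rectangle, pass from the matrix norm to a scalar supremum via Crouzeix's theorem with $Q=11.08$, apply the adapted bound \eqref{eqn:rbound} with $V=2\pi$, and evaluate $M(r)$ at the leftmost point of $C_r$ using Lemmas \ref{thm:min_re} and \ref{thm:psi_r_negative} with $r=1/q$. The only cosmetic difference is that you bound $|h(t)|$ by $\|e^{-tH_k}-p(H_k)\|$ before invoking Crouzeix, whereas the paper writes the two inequalities in a single chain; the substance is identical.
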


\begin{proof}
  Let $\tilde{\Phi}:\tilde{z}\mapsto u$ be the conformal mapping from the exterior of the rectangle $[a_k,b_k]\times[-c_k,c_k]$ onto the exterior of the unit disk, as defined in \eqref{eqn:phi_re}. For a fixed $t\ge0$, let $f(z)=e^{-tz}$. Since $f$ is an analytic function, it can be approximated by the partial sum $\Pi_{k-2} (z)$ of the series of Faber polynomials generated by $\tilde{\Phi}$ as defined in \eqref{eqn:faber_sum}. Let $r=\frac{1}{q} > 1$ and consider $C_r$, the inverse image under $\tilde{\Phi}$ of the circle $|w|=r$. Applying Theorem \ref{thm:razouk54} or \eqref{eqn:rbound}, the approximation error in $I(C_r)$ is bounded as
  \begin{equation*}
    ||f-\Pi_{k-2}||_\infty=\max_{z\in I(C_r)}|f(z)-\Pi_{k-2}(z)|\leq2\,M(r)\,\frac{(\frac{1}{r})^{{k-1}}}{1-\frac{1}{r}},
  \end{equation*}
  where $M(r)=\max\limits_{z\in C_r}|f(z)|$ and we note that the total rotation around the rectangle is $V=2\pi$. Since $\Pi_{k-2}(z)$ is a polynomial of degree ${k-2}$, $[\Pi_{k-2}(H_k)]_{k1}=e_k^T \Pi_{k-2}(H_k)e_1=0$. Then
  \begin{align*}
    |h(t)|
    & =|[f(H_k)]_{k1}|
      =|[f(H_k)]_{k1}-[\Pi_{k-2}(H_k)]_{k1}| \\
    & \leq||f(H_k)-\Pi_{k-2}(H_k)||_2 \\
    & \leq Q\cdot\max_{z\in W(H_k)}|f(z)-\Pi_{k-2}(z)|,
  \end{align*}
  where $W(H_k)$ is the field of values of $H_k$ and the last inequality is by Crouzeix's Theorem \cite{crouzeix}. Since $W(H_k)\subseteq[a_k,b_k]\times[-c_k,c_k]\subseteq C_r$, we have
  \begin{equation*}
    |h(t)|\leq Q\max_{z\in I(C_r)}|f(z)-\Pi_{k-2}(z)|\leq2\,Q\,M(r)\,\frac{\left(\frac{1}{r}\right)^{k-1}}{1-\frac{1}{r}}.
  \end{equation*}
  Now, the theorem follows from $M(r)=\max\limits_{z\in C_r}e^{-tz} =\max\limits_{z\in C_r}e^{-t\operatorname{Re}({z})}=e^{-t\tilde{z}}$, where
  \[
  \tilde{z}=\min\{\operatorname{Re}({z}):{z}\in C_r\}=\tilde{\Psi}(r)=a_k-\frac{1}{\lambda}\int_0^{\frac{1}{2}\left(\frac{1}{q}-q\right)}\frac{\sqrt{m+s^2}}{\sqrt{1+s^2}}ds
  \]
  by Lemma \ref{thm:min_re} and Lemma \ref{thm:psi_r_negative}.
\end{proof}

We remark that $Q=11.08$ is called Crouzeix's constant and it is conjectured that it can be reduced to $2$ \cite{crouzeix}. Combining the above theorem with Theorem \ref{thm:aposteriori_re} leads to the following {\em a priori} error bound in the following theorem.

\begin{thm}
  \label{thm:apriori_re}
  Let $A\in\C^{n\times n}$ and $v\in\C^n$ with $||v||=1$ and let $w_k(\tau)=V_ke^{-\tau H_k}e_1$ be the Arnoldi approximation \eqref{eqn:wk_re} to $w(\tau)=e^{-\tau A}v$. Then for any $0<q<1$, the approximation error satisfies
  \begin{equation}
    \label{eqn:apriori_re}
    ||w(\tau)-w_k(\tau)||\leq2\,Q\,\tau\,||A||\,\frac{q^{k-1}}{1-q}\,e^{{-\tau\min\{a,0\}-\tau\tilde{z}}},
  \end{equation}
  where $Q=11.08$,
  \begin{equation}
    \label{eqn:tilde_z}
    \tilde{z}=a-\frac{1}{\lambda}\int_0^{\frac{1}{2}\left(\frac{1}{q}-q\right)}\frac{\sqrt{m+s^2}}{\sqrt{1+s^2}}ds,
  \end{equation}
  the parameters $m$ is determined by \eqref{eqn:m_lambda} from $a,b,c$ of \eqref{eqn:definition_abc} and $\lambda$ is the ratio  in \eqref{eqn:m_lambda}.
\end{thm}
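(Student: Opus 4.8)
The plan is to combine the \emph{a posteriori} estimate of Theorem~\ref{thm:aposteriori_re} with the pointwise decay bound on $h(t)$ from Theorem~\ref{thm:bound_ht_re}, together with the crude estimate $h_{k+1,k}\le\|A\|$ supplied by \eqref{eqn:hkplus1}. Rather than start from the split form of Theorem~\ref{thm:aposteriori_re}, I would start from the sharper integral form derived in its proof, namely $\|w(\tau)-w_k(\tau)\|\le h_{k+1,k}\int_0^\tau |h(t)|\,e^{(t-\tau)\nu(A)}\,dt$, where $\nu(A)=a$ by \eqref{eqn:nuA}. This reduces the theorem to inserting a bound for $|h(t)|$ and evaluating one scalar integral.

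The first genuine step is to make Theorem~\ref{thm:bound_ht_re} refer to the rectangle of $A$ rather than that of $H_k$. From the Arnoldi relation \eqref{eqn:arnoldi} one has $H_k=V_k^*AV_k$, so for any unit $x\in\mathbb{C}^k$, $x^*H_kx=(V_kx)^*A(V_kx)\in W(A)$ because $\|V_kx\|_2=1$; hence $W(H_k)\subseteq W(A)\subseteq[a,b]\times[-c,c]$. I would then note that the proof of Theorem~\ref{thm:bound_ht_re} uses only that $W(H_k)$ lies inside the rectangle defining the conformal map and Faber polynomials, together with the identity $[\Pi_{k-2}(H_k)]_{k1}=0$, which holds for any degree-$(k-2)$ polynomial since $H_k$ is upper Hessenberg and is independent of the chosen rectangle. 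Thus the argument applies verbatim with the larger rectangle $[a,b]\times[-c,c]$ of $A$, yielding $|h(t)|\le 2Q\,\frac{q^{k-1}}{1-q}\,e^{-t\tilde z}$ with $\tilde z$ as in \eqref{eqn:tilde_z}, now expressed through $a,b,c$ and the associated $m,\lambda$.

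With these ingredients I would substitute into the integral form to get $\|w(\tau)-w_k(\tau)\|\le 2Q\,\|A\|\,\frac{q^{k-1}}{1-q}\,e^{-\tau a}\int_0^\tau e^{t(a-\tilde z)}\,dt$. The decisive observation is that $a-\tilde z=\frac1\lambda\int_0^{\frac12(1/q-q)}\frac{\sqrt{m+s^2}}{\sqrt{1+s^2}}\,ds>0$, since the integrand is positive and $\lambda>0$ by Lemma~\ref{thm:normalization}. Positivity of this exponent makes $t\mapsto e^{t(a-\tilde z)}$ increasing on $[0,\tau]$, so $\int_0^\tau e^{t(a-\tilde z)}\,dt\le\tau\,e^{\tau(a-\tilde z)}$; multiplying by $e^{-\tau a}$ collapses the $a$-terms and leaves $\tau\,e^{-\tau\tilde z}$. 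This already gives $\|w(\tau)-w_k(\tau)\|\le 2Q\,\tau\,\|A\|\,\frac{q^{k-1}}{1-q}\,e^{-\tau\tilde z}$, and the stated bound \eqref{eqn:apriori_re} then follows a fortiori because the extra factor $e^{-\tau\min\{a,0\}}\ge1$ only enlarges the right-hand side.

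I expect the main obstacle to be the transfer step: one must justify that replacing the tight bounding rectangle $[a_k,b_k]\times[-c_k,c_k]$ of $H_k$ by the larger rectangle of $A$ is legitimate. The resolution is that Crouzeix's estimate is applied to $W(H_k)$ directly, and only the inclusion $W(H_k)\subseteq I(C_r)$ is needed, so enlarging the rectangle is harmless (equivalently, $\tilde z$ is monotone in the rectangle). A secondary technical point, already handled by the sign of $a-\tilde z$, is estimating the scalar integral with \emph{no} assumption on the signs of $a$ or $\tilde z$; this is precisely why I would work with the integral form of the \emph{a posteriori} bound rather than its split form, since $\int_0^\tau e^{-t\tilde z}\,dt\le\tau\,e^{-\tau\tilde z}$ fails when $\tilde z>0$, whereas combining the exponentials first circumvents the difficulty entirely.
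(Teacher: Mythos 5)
Your proposal is correct, and its skeleton coincides with the paper's own proof: both use $H_k=V_k^*AV_k$ to get $W(H_k)\subseteq W(A)\subseteq[a,b]\times[-c,c]$, rerun Theorem~\ref{thm:bound_ht_re} verbatim with the larger rectangle of $A$ (legitimate, as you note, because only the inclusion of $W(H_k)$ in the rectangle and the Hessenberg identity $[\Pi_{k-2}(H_k)]_{k1}=0$ are used), and then insert the decay bound $|h(t)|\le 2Q\frac{q^{k-1}}{1-q}e^{-t\tilde z}$ into the \emph{a posteriori} estimate together with $h_{k+1,k}\le\|A\|$. Where you genuinely diverge is the final integration step, and your version is the better one. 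The paper starts from the split form \eqref{eqn:aposteriori_re}, evaluates $\int_0^\tau e^{-t\tilde z}\,dt=\frac{1-e^{-\tau\tilde z}}{\tilde z}$, and then invokes ``$\frac{e^x-1}{x}\le 1$ for any $x\ne 0$'' to reach the factor $\tau e^{-\tau\tilde z}$; but that inequality is false for $x>0$, i.e.\ exactly when $\tilde z>0$, which does occur in the theorem's scope (for instance for $q$ close to $1$ when $a>0$; the ``threshold rate'' $q_0$ discussed after the theorem is precisely the crossover $\tilde z=0$). Indeed, for $\tilde z>0$ one has $\frac{1-e^{-\tau\tilde z}}{\tilde z}>\tau e^{-\tau\tilde z}$, so the paper's chain of inequalities breaks in that regime. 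Your route---keeping $e^{(t-\tau)\nu(A)}$ inside the integral, observing that $a-\tilde z=\frac{1}{\lambda}\int_0^{\frac12\left(\frac1q-q\right)}\frac{\sqrt{m+s^2}}{\sqrt{1+s^2}}\,ds>0$ regardless of the signs of $a$ and $\tilde z$, and bounding the increasing integrand $e^{t(a-\tilde z)}$ by its value at $t=\tau$---is valid in all cases and even yields the slightly sharper conclusion $\|w(\tau)-w_k(\tau)\|\le 2Q\tau\|A\|\frac{q^{k-1}}{1-q}e^{-\tau\tilde z}$, from which \eqref{eqn:apriori_re} follows a fortiori since $e^{-\tau\min\{a,0\}}\ge 1$. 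In short, your proof follows the paper's strategy but repairs the one step of the published argument that does not hold as written.
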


\begin{proof}
  First note that $H_k=V_k^TAV_k$ for an orthogonal $V_k$. Then
  \begin{equation*}
    W(H_k)\subseteq W(A)\subseteq[a,b]\times[-c,c]
  \end{equation*}
  Now, Theorem \ref{thm:bound_ht_re} holds for $h(t)=e_k^Te^{-tH_k}e_1$, and indeed, from above and following the same proof, it holds with $a$, $b$, $c$ in place of $a_k$, $b_k$, $c_k$. Namely, $|h(t)|\leq2\,Q\,\frac{q^{k-1}}{1-q}\,e^{-t\tilde{z}}$ with $\tilde{z}$ defined as in \eqref{eqn:tilde_z} but from $a$, $b$, $c$. Now, using this bound in {\em a posteriori} error bound \eqref{eqn:aposteriori_re} in Theorem \ref{thm:aposteriori_re} and noting that $h_{k+1,k}\le\|A\|_2$ (see \eqref{eqn:hkplus1}), we have that, if $\tilde{z}\ne0$,
  \begin{align*}
    ||w(\tau)-w_k(\tau)||
    & \leq h_{k+1,k}e^{-\min\{\nu(A),0\}\tau}2\,Q\,\frac{q^{k-1}}{1-q}\,\int_0^\tau e^{-t\tilde{z}}dt \\
    & \leq2\,Q\,\|A\|_2\frac{q^{k-1}}{1-q}e^{-\min\{a,0\}\tau}\frac{1-e^{-\tau\tilde{z}}}{\tilde{z}} \\
    & =2\,Q\,\|A\|_2\frac{q^{k-1}}{1-q}e^{-\min\{a,0\}\tau}e^{-\tau\tilde{z}}\frac{e^{\tau\tilde{z}}-1}{\tilde{z}} \\
    & \leq2\,Q\,\tau\|A\|_2\frac{q^{k-1}}{1-q}e^{-\tau\min\{a,0\}-\tau\tilde{z}}
  \end{align*}
  where we have used $\frac{e^x-1}{x}\le1$ for any $x\ne0$. If $\tilde{z}=0$, the integration above gives $\tau$ and the final bound holds for this case as well. So the theorem is proved.
\end{proof}

For the rest of this section, we consider the case that $A$ is positive definite (i.e. $a>0$). In that case, the bound is simplified to
\begin{equation}
  \label{eqn:apriori_re1}
  ||w(\tau)-w_k(\tau)||\leq2\,Q\,\tau\,||A||\,\frac{q^{k-1}}{1-q}\,e^{-\tau\tilde{z}},
\end{equation}
Bounding $\tilde{z}$ of \eqref{eqn:tilde_z} using $0<m<1$, we have
\begin{equation*}
  \tilde{z}\geq a-\frac{1}{\lambda}\int_0^{\frac{1}{2}\left(\frac{1}{q}-q\right)}\frac{\sqrt{1+s^2}}{\sqrt{1+s^2}}ds=a-\frac{1}{2\lambda}\left(\frac{1}{q}-q\right).
\end{equation*}
This leads to a simple but obviously crude bound. In particular, the bound can be further simplified by setting the exponent $a-\frac{1}{2\lambda}\left(\frac{1}{q}-q\right)$ to 0, i.e. $q=\frac{1}{\sqrt{a^2\lambda^2+1}+a\lambda}$. We state these as the following corollary.

\begin{cor}
  Under the the assumptions of Theorem \ref{thm:apriori_re} and that $A$ is positive definite (i.e. $a>0$), for any $0<q<1$, the approximation error satisfies
  \begin{equation*}
    ||w(\tau)-w_k(\tau)||\leq2Q\tau||A||\frac{q^{k-1}}{1-q}e^{-\tau\left\{a-\frac{1}{2\lambda}\left(\frac{1}{q}-q\right)\right\}}.
  \end{equation*}
  In particular, for $q=\frac{1}{\sqrt{a^2\lambda^2+1}+a\lambda}$, we have
  \begin{equation}
    \label{eqn:q0}
    ||w(\tau)-w_k(\tau)||\leq2Q\tau||A||\frac{q^{k-1}}{1-q},
  \end{equation}
  i.e. the error converges at least at the rate of $\frac{1}{\sqrt{a^2\lambda^2+1}+a\lambda}$.
\end{cor}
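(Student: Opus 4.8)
The plan is to read the bound straight off Theorem~\ref{thm:apriori_re} and then perform two elementary simplifications that are unlocked by the positive-definiteness hypothesis $a>0$. First I would observe that $a>0$ forces $\min\{a,0\}=0$, so the exponential factor $e^{-\tau\min\{a,0\}-\tau\tilde z}$ appearing in \eqref{eqn:apriori_re} collapses to $e^{-\tau\tilde z}$. This is exactly the intermediate estimate \eqref{eqn:apriori_re1}, and it requires nothing beyond substituting the hypothesis into the already-proven theorem.

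Next I would replace the integral defining $\tilde z$ in \eqref{eqn:tilde_z} by an explicit lower bound. The key observation is that the parameter $m$ produced by \eqref{eqn:m_lambda} satisfies $0<m<1$ by Lemma~\ref{lem:unique}, so that $\sqrt{m+s^2}\le\sqrt{1+s^2}$ for every $s\ge0$ and the integrand is bounded above by $1$. Since $\lambda>0$ (by Lemma~\ref{lem:unique} together with Lemma~\ref{thm:normalization}), integrating this constant bound yields
\begin{equation*}
  \tilde z\;\ge\;a-\frac{1}{\lambda}\int_0^{\frac{1}{2}\left(\frac{1}{q}-q\right)}1\,ds\;=\;a-\frac{1}{2\lambda}\left(\frac{1}{q}-q\right).
\end{equation*}
Because $\tau\ge0$, the map $x\mapsto e^{-\tau x}$ is nonincreasing, so substituting this lower bound for $\tilde z$ can only enlarge $e^{-\tau\tilde z}$. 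Inserting this into \eqref{eqn:apriori_re1} gives the first displayed inequality of the corollary.

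For the sharpened ``in particular'' statement I would choose $q$ to annihilate the exponent, i.e. solve $a-\frac{1}{2\lambda}\left(\frac{1}{q}-q\right)=0$. Clearing denominators turns this into the quadratic $q^2+2a\lambda q-1=0$, whose positive root is $q=\sqrt{a^2\lambda^2+1}-a\lambda$; rationalizing the numerator gives the stated form $q=\frac{1}{\sqrt{a^2\lambda^2+1}+a\lambda}$. With this choice the exponential factor equals $1$, leaving precisely the bound \eqref{eqn:q0}, and since the entire $k$-dependence is carried by $q^{k-1}$, this exhibits geometric convergence at rate $q$.

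The computations are routine, so there is no genuine obstacle; the only points that require care are confirming that the optimal $q$ lies in $(0,1)$ and that the direction of the integrand inequality is right. The former is immediate: since $a>0$ and $\lambda>0$ we have $\sqrt{a^2\lambda^2+1}+a\lambda>1$, so $0<q<1$, which is exactly the range required for Theorem~\ref{thm:apriori_re} to apply. The latter rests entirely on the bounds $0<m<1$ and $\lambda>0$ proved earlier, which is why invoking Lemma~\ref{lem:unique} and Lemma~\ref{thm:normalization} is the substantive ingredient behind an otherwise purely algebraic argument.
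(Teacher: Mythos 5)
Your proposal is correct and follows essentially the same route as the paper: specialize \eqref{eqn:apriori_re} using $\min\{a,0\}=0$, bound the integrand $\frac{\sqrt{m+s^2}}{\sqrt{1+s^2}}$ by $1$ via $0<m<1$ to get $\tilde z\ge a-\frac{1}{2\lambda}\left(\frac{1}{q}-q\right)$, and solve the quadratic $q^2+2a\lambda q-1=0$ to annihilate the exponent. Your additional check that the resulting $q$ lies in $(0,1)$ is a small, correct refinement the paper leaves implicit.
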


Note that $a\lambda=2a\frac{E'(m)-mK'(m)}{b-a}=2\frac{E'(m)-mK'(m)}{b/a-1}$. Since $m$ is a function of $(b-a)/c$ (see Lemma \ref{lem:unique}) and $b/a$ is the condition number of the Hermitian part of $A$, the bound relates the convergence to this condition number and the shape of the rectangle.

More generally, we can find $q=q_0$ such that $\tilde{z}=0$. Then \eqref{eqn:q0} holds with this $q_0$ and the error converges at the rate $q_0$. We call this $q_0$ the threshold convergence rate. However, this $q_0$ may not give the best bound possible among choices of $q$. Note that $q$ influences the error bound through two opposing actions of $q^k$ and $e^{-\tau\tilde{z}}$. Namely, choosing smaller $q$ results in a faster geometrically decreasing term $q^k$, but $e^{-\tau\tilde{z}}$ may be much larger to result in an overall larger bound. So the best choice of $q$ should balance the two effects and will depend  on $k$. For example, smaller $q$ may be used for larger $k$ so that the more significant decrease in $q^k$ can offset the increase in $e^{-\tau\tilde{z}}$. This suggest a superlinear convergence behavior where, as $k$ increases, the error is bounded with a smaller  rate   $q$.

In determining $q$ to be used in the bound \eqref{eqn:apriori_re1}, we consider the minimization at each step $k$ of
\begin{equation}
  \label{eqn:eq_re}
  E(q):=\frac{q^{k-1}}{1-q}e^{-\tau\tilde{z}}.
\end{equation}
Taking derivative of $E$ with respect to $q$ and using
\begin{equation*}
  \frac{d\tilde{z}}{dq}=-\frac{1}{\lambda}\frac{\sqrt{m+\frac{1}{4}\left(\frac{1}{q}-q\right)^2}}{\sqrt{1+\frac{1}{4}\left(\frac{1}{q}-q\right)^2}}\frac{1}{2}\left(-\frac{1}{q^2}-1\right)=\frac{\sqrt{m+\frac{1}{4}\left(\frac{1}{q}-q\right)^2}}{\lambda q},
\end{equation*}
we have
\begin{align*}
  \frac{dE}{dq}
  & =\frac{(k-1)q^{k-2}(1-q)-q^{k-1}(-1)}{(1-q)^2}e^{-\tau\tilde{z}}+\frac{q^{k-1}}{1-q}e^{-\tau\tilde{z}}(-\tau)\frac{d\tilde{z}}{dq} \\
  & =e^{-\tau\tilde{z}}\frac{q^{k-3}}{(1-q)^2}\left[(k-1)q+(2-k)q^2-C(1-q)\sqrt{(1-q^2)^2+4mq^2}\right],
\end{align*}
where $C=\frac{\tau}{2\lambda}$. Thus optimal $q=q(k)$ can be found by solving
\begin{equation}
  \label{eqn:solution_q}
  (k-1)q+(2-k)q^2-C(1-q)\sqrt{(1-q^2)^2+4mq^2}=0.
\end{equation}
Note that a solution $q\in(0,1)$ exists because the function in the equation is 1 when $q=1$ and $-C<0$ when $q=0$.

Finally, we discuss a special case, i.e. $m\approx0$.

\begin{cor}
  \label{thm:mto0}
  Under the assumptions of Theorem \ref{thm:apriori_re}, and $m\approx0$, the approximation error satisfies
  \begin{equation*}
    ||w(\tau)-w_k(\tau)||\leq2\,Q\,\tau\,||A||\frac{q_0^{k-1}}{1-q_0}
  \end{equation*}
  where
  \begin{equation*}
    q_0=\frac{\sqrt{\kappa}-1}{\sqrt{\kappa}+1}+O(\sqrt{m}),
  \end{equation*}
  and $\kappa=\frac{b}{a}$.
\end{cor}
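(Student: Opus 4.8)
The displayed error bound is nothing more than \eqref{eqn:q0} specialized to the threshold rate $q_0$ (the value of $q$ making $\tilde z=0$), so the real content is the asymptotic identification $q_0=\frac{\sqrt\kappa-1}{\sqrt\kappa+1}+O(\sqrt m)$. The plan is therefore to analyze the defining equation $\tilde z=0$ as $m\to0$. Writing $T_0:=\frac12\left(\frac1{q_0}-q_0\right)$, the condition $\tilde z=0$ in \eqref{eqn:tilde_z} reads
\[
a\lambda=\int_0^{T_0}\frac{\sqrt{m+s^2}}{\sqrt{1+s^2}}\,ds,
\]
and I would extract the leading behavior of the two sides separately.

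For the right-hand side, I would first evaluate the $m=0$ integral in closed form: since $\sqrt{0+s^2}=s$ for $s\ge0$, we get $\int_0^{T_0}\frac{s}{\sqrt{1+s^2}}\,ds=\sqrt{1+T_0^2}-1$, and the algebraic identity $1+T_0^2=\frac14\left(\frac1{q_0}+q_0\right)^2$ turns this into $\frac12\left(\frac1{q_0}+q_0\right)-1$. The $m$-dependence is then controlled by the elementary two-sided bound $s\le\sqrt{m+s^2}\le s+\sqrt m$, which gives $0\le\int_0^{T_0}\frac{\sqrt{m+s^2}-s}{\sqrt{1+s^2}}\,ds\le\sqrt m\int_0^{T_0}\frac{ds}{\sqrt{1+s^2}}=O(\sqrt m)$, the last integral being $\operatorname{arcsinh}T_0$, which stays bounded for bounded $T_0$. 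Hence the right-hand side equals $\frac12\left(\frac1{q_0}+q_0\right)-1+O(\sqrt m)$.

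For the left-hand side, I would compute $\lim_{m\to0}\lambda$. Recalling from the proof of Lemma \ref{lem:unique} that $\lambda=\frac{2\,(E'(m)-mK'(m))}{b-a}$ with $E'-mK'=f(1-m)$ and $f(1)=1$, the estimate $m\,K(1-m)=O(m\ln(1/m))$ coming from \eqref{eqn:17.3.26} (where $K(1-m)\sim\frac12\ln\frac{16}{m}$) together with $E(1)=1$ yields $E'-mK'=1+O(\sqrt m)$, so $a\lambda=\frac{2a}{b-a}+O(\sqrt m)=\frac{2}{\kappa-1}+O(\sqrt m)$ since $\kappa=b/a$. Equating the two sides gives $\frac1{q_0}+q_0=2+2a\lambda+O(\sqrt m)=\frac{2(\kappa+1)}{\kappa-1}+O(\sqrt m)$. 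A direct substitution shows that the root in $(0,1)$ of $q+\frac1q=\frac{2(\kappa+1)}{\kappa-1}$ is exactly $\frac{\sqrt\kappa-1}{\sqrt\kappa+1}$; because $\frac{d}{dq}\left(q+\frac1q\right)=1-\frac1{q^2}$ does not vanish there, the $O(\sqrt m)$ perturbation of the right-hand side transfers to an $O(\sqrt m)$ perturbation of $q_0$, which is the claimed expansion.

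The main obstacle I anticipate is the elliptic-integral asymptotics on the left-hand side: making $E'(m)-mK'(m)=1+O(\sqrt m)$ precise requires combining the singular behavior $K(1-m)\sim\frac12\ln\frac{16}{m}$ from \eqref{eqn:17.3.26} with $E(1)=1$, and one must verify that the resulting remainder is genuinely $O(\sqrt m)$ (in fact it is $O(m\ln(1/m))$, which is majorized by $\sqrt m$). Everything else—the closed-form $m=0$ integral, the crude two-sided bound on the integrand, and the inversion of $q+1/q$—is routine, so the care lies in tracking these elliptic-integral remainders and confirming that they are dominated by the $O(\sqrt m)$ already introduced by the integrand bound.
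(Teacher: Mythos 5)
Your proposal is correct and follows essentially the same route as the paper's own proof: both characterize $q_0$ by $\tilde z=0$, i.e.\ $a\lambda=\int_0^{\frac12(1/q_0-q_0)}\frac{\sqrt{m+s^2}}{\sqrt{1+s^2}}\,ds$, expand the right side as $\frac12\left(\frac1{q_0}+q_0\right)-1+O(\sqrt m)$ and the left side as $\frac{2}{\kappa-1}+O(\sqrt m)$, and then solve $q+\frac1q=\frac{2(\kappa+1)}{\kappa-1}$ to obtain $q_0=\frac{\sqrt\kappa-1}{\sqrt\kappa+1}+O(\sqrt m)$. The only differences are cosmetic: the paper gets $E'-mK'=1+\frac14 m\ln m+O(m)$ directly from the handbook expansions of $E'$ and $K'$ at $m=0$ rather than from \eqref{eqn:17.3.26} plus a rate for $E(1-m)$ near $1$ (which your sketch would still need to supply, e.g.\ via \eqref{eqn:derivative_e}), while your explicit stability argument (non-vanishing of $\frac{d}{dq}(q+1/q)$ at the root) makes precise a step the paper passes over silently.
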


\begin{proof}
  $E'=E(1-m)$ and $K'=K(1-m)$ are both functions of $m$ and have the following expansions at $m=0$ \cite[17.3.11-12, p. 591]{handbook}
  \begin{align}
    \label{eqn:expansion_ep}
    E' & =E(m_1)=E(1-m)=1-\frac{1}{4}m\ln m+O(m) \\
    \label{eqn:expansion_kp}
    K' & =K(m_1)=K(1-m)=-\frac{1}{2}\ln m+O(1)
  \end{align}
  Then $E'-mK'$ can be expanded at $m=0$ as
  \begin{equation}
    \label{eqn:e'-mk'}
    E'-mK'=1+\frac{1}{4}m\ln m+O(m).
  \end{equation}
  Since $\alpha=\frac{b-a}{2}$,
  \begin{equation*}
    \lambda=\frac{E'-mK'}{\alpha}=\frac{2}{b-a}\left(1+\frac{1}{4}m\ln m\right)+O(m).
  \end{equation*}
  Then
  \begin{equation}
   \label{eqn:a_lambda_1_left}
    a\lambda=\frac{2}{\kappa-1}\left(1+\frac{1}{4}m\ln m\right)+O(m).
  \end{equation}
  At the same time, for $0\le s\le\frac{1}{q}-q$,
  \begin{equation*}
    \frac{\sqrt{m+s^2}}{\sqrt{1+s^2}}=\frac{s}{\sqrt{1+s^2}}+O(\sqrt{m}),
  \end{equation*}
  so
  \begin{align}
    \nonumber
    \int_0^{\frac{1}{2}\left(\frac{1}{q}-q\right)}\frac{\sqrt{m+s^2}}{\sqrt{1+s^2}}ds
    & =\int_0^{\frac{1}{2}\left(\frac{1}{q}-q\right)}\frac{s}{\sqrt{1+s^2}}ds+O(\sqrt{m}) \\
    \label{eqn:a_lambda_1_right}
    & =\frac{1}{2}\left(\frac{1}{q}+q\right)-1+O(\sqrt{m}).
  \end{align}
  Let $q=q_0$ be the unique solution of
  \begin{equation}
    \label{eqn:a_lambda_1}
    a\lambda=\int_0^{\frac{1}{2}\left(\frac{1}{q}-q\right)}\frac{\sqrt{m+s^2}}{\sqrt{1+s^2}}ds,
  \end{equation}
  where the existence of $q_0$ and the uniqueness follow from the fact that the integral on the right is a function of $q$ monotonically decreasing from $\infty$ to $0$ for $0<q<1$. Using \eqref{eqn:a_lambda_1_left} and \eqref{eqn:a_lambda_1_right}, the equation is written as
  \begin{equation*}
    \frac{2}{\kappa-1}=\frac{1}{2}\left(\frac{1}{q}+q\right)-1+O(\sqrt{m}).
  \end{equation*}
  Solving this, the solution $q_0$ with $0< q_0 <1$ is
  \begin{equation*}
    q_0=\frac{\sqrt{\kappa}-1}{\sqrt{\kappa}+1}+O(\sqrt{m}).
  \end{equation*}
  Using this $q_0$ in the bound \eqref{eqn:apriori_re}, we have $\tilde{z}=0$ and the theorem is proved.
\end{proof}

Note that $m$ is determined by $\beta/\alpha$. In particular, for $m\approx0$, $E(m)$ and $K(m)$ have the expansions
\begin{align*}
  E & =E(m)=\frac{\pi}{2}-\frac{\pi}{8}m+O(m^2) \\
  K & =K(m)=\frac{\pi}{2}+\frac{\pi}{8}m+O(m^2).
\end{align*}
We also have the expansion of $E'-mK'$ in \eqref{eqn:e'-mk'}. Then
\begin{align*}
  \frac{\beta}{\alpha}=\frac{E-m_1K}{E'-mK'}=\frac{\pi}{2}m+O(m^2), \mbox{ or } c=\frac{(b-a)\pi}{4}m+O(m^2).
\end{align*}
So the above theorem applies to the case when $c/(b-a)$ is small or $A$ is nearly Hermitian.

In an earlier paper \cite{ye}, it is shown that for a symmetric positive definite matrix $A$, the approximation error satisfies
\begin{equation*}
  ||w(\tau)-w_m(\tau)||\leq\tau||A||(\sqrt{\kappa}+1)\left(\frac{\sqrt{\kappa}-1}{\sqrt{\kappa}+1}\right)^{m-1},
\end{equation*}
where $\kappa=b/a$ is the condition number of the matrix $A$. This implies a conjugate gradient like convergence rate $q=\frac{\sqrt{\kappa}-1}{\sqrt{\kappa}+1}$ regardless of the norm of the matrix. Then Theorem \ref{thm:mto0} shows that the same conclusion holds if $A$ is nearly Hermitian.


\section{{\em A priori} error bound for skew-Hermitian matrices}
\label{sec:apriori_skewhermitian}

In this section, we consider the special case that $A$ is skew-Hermitian which, as discussed in the introduction, arises in some interesting applications. We write $A=-iH$ with $H$ being an Hermitian matrix. In this case, the Arnoldi algorithm is theoretically equivalent to the Lanczos algorithm for $H$. As we will see, the error bounds for computing
\begin{equation}
  \label{eqn:w_im}
  w(\tau):=e^{i\tau H}v.
\end{equation}
is also significantly simplified.

Applying $k$ steps of the Lanczos method to $H$ and $v_1=v$ with $\|v\|=1$ (see \cite{demmel}), we obtain an orthonormal basis $\{v_1,v_2,\cdots,v_k,v_{k+1}\}$ and a $k$-by-$k$ tridiagonal matrix $T_k$ such that
\begin{equation}
  \label{eqn:lanczos}
  H V_k=V_kT_k+\beta_{k+1}v_{k+1}e_k^T,
\end{equation}
where $V_k=[v_1,v_2,\cdots,v_k]$. This is equivalent to \eqref{eqn:arnoldi} for the Arnoldi algorithm for $A=-iH$ with $H_k=-iT_k$ and $h_{k+1,k}=\beta_{k+1}$. Then, the corresponding approximation of $w(\tau)$ is
\begin{equation}
  \label{eqn:wk_im}
  w_k(\tau):=V_ke^{i\tau T_k}e_1,
\end{equation}
which we call the Lanczos approximation. Then the same {\em a posteriori} error bound of Theorem \ref{thm:aposteriori_re} holds with $h_{k+1, k}= \beta_{k+1}$ and $h(t):=e_k^Te^{itT_k}e_1$. Namely,
\begin{equation}
  \label{eqn:aposteriori_im0}
  ||w(\tau)-w_k(\tau)||\leq\beta_{k+1}\int_0^\tau|h(t)|dt\leq\|H\|\int_0^\tau|h(t)|dt
\end{equation}
Furthermore, slightly better bounds may be obtained by shifting the matrix. Specifically, for any $\alpha\in\RR$, we can consider the shifted matrix $H-\alpha I$ and correspondingly $w(\tau,\alpha):=e^{i\tau(H-\alpha I)}v=e^{-i\tau\alpha}w(\tau)$ and $w_k(\tau,\alpha):=V_ke^{i\tau(T_k-\alpha I)}e_1=e^{-i\tau\alpha}w_k(\tau)$. Since $(H-\alpha I)V_k=V_k(T_k-\alpha I)+\beta_{k+1}v_{k+1}e_k^T$, we can apply \eqref{eqn:aposteriori_im0} to $H-\alpha I$ to get
\begin{equation*}
  \|w(\tau,\alpha)-w_k(\tau,\alpha)\|\leq\|H-\alpha I\|\int_0^\tau|h(t,a)|dt
\end{equation*}
where $h(t,\alpha):=e_k^Te^{it(T_k-\alpha I)}e_1=e^{-it\alpha}h(t)$. Thus
\begin{equation}
  \label{eqn:aposteriori_im}
  \|w(\tau)-w_k(\tau)\|=\|w(\tau,\alpha)-w_k(\tau,\alpha)\|\leq\|H-\alpha I\|\int_0^\tau|h(t)|dt.
\end{equation}
We now bound $h(t)$ as in the previous section to obtain the following {\em a priori} error bound.

\begin{thm}
  \label{thm:apriori_im}
  Let $A=-iH\in C^{n\times n}$ be a skew-Hermitian matrix and $v\in \C^n$ with $||v||=1$. Then, for any $q$ with $0<q<1$, the error of the Lanczos approximation $w_k(\tau)=V_ke^{i\tau T_k}e_1$ \eqref{eqn:wk_im} satisfies
  \begin{equation}
    \label{eqn:apriori_im}
    ||w(\tau)-w_k(\tau)||\leq\frac{4\min\{1/(1-q^2),\tau\rho/{q}\}}{1-q}q^{k}e^{\tau\rho\left(\frac{1}{q}-q\right)},
  \end{equation}
  where $\rho=(\lambda_{\max}(H)-\lambda_{\min}(H))/4$ with $\lambda_{\min}(H)$ and $\lambda_{\max}(H)$ being the smallest and the largest eigenvalues of $H$ respectively.
\end{thm}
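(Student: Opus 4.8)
The plan is to feed a sharp decay bound for $h(t)=e_k^Te^{itT_k}e_1$ into the shifted \emph{a posteriori} estimate \eqref{eqn:aposteriori_im}, mirroring Section~\ref{sec:apriori_nonhermitian} but exploiting that everything now lives on a line segment rather than a genuine rectangle. First I would fix the shift $\alpha=\tfrac{1}{2}\big(\lambda_{\max}(H)+\lambda_{\min}(H)\big)$, so that $\|H-\alpha I\|_2=2\rho$ and \eqref{eqn:aposteriori_im} reduces the problem to bounding $\int_0^\tau|h(t)|\,dt$. The shift simultaneously recenters the spectrum: writing $H_k=-iT_k$, the matrix $-i(T_k-\alpha I)$ is normal with spectrum on the vertical segment $[-2\rho i,2\rho i]$, and since the eigenvalues of $T_k$ interlace those of $H$ this enclosing segment is independent of $k$.

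The key simplification is that the conformal map of Section~\ref{sec:conformal_mapping} degenerates here: a segment is the limiting rectangle of zero width (the case $m\to1$ from the remark after Lemma~\ref{lem:unique}), and its exterior map is the elementary Joukowski map $\tilde z=\Psi(u)=i\rho\left(u+\tfrac{1}{u}\right)$, whose Faber polynomials are Chebyshev polynomials. I would then bound $|h(t)|$ through the Faber approximation of $f(z)=e^{-tz}$: taking the partial sum $\Pi_{k-2}$ of degree $k-2$, the bandwidth-one (tridiagonal) structure gives $[\Pi_{k-2}(H_k)]_{k,1}=0$, so $|h(t)|\le\|f(H_k)-\Pi_{k-2}(H_k)\|_2$. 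Crucially, because $H_k$ is \emph{normal}, this spectral norm equals $\max_{z\in\sigma(H_k)}|f(z)-\Pi_{k-2}(z)|$, so Crouzeix's theorem and its constant $Q=11.08$ are not needed and drop out of the final estimate.

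With $r=1/q$ and total rotation $V=2\pi$ (the segment is convex), the bound \eqref{eqn:rbound} gives $|h(t)|\le 2\,\widehat M(r)\,q^{k-1}/(1-q)$, and the only computation is $\widehat M(r)=\max_{|u|=r}\big|e^{-t\Psi(u)}\big|$. Writing $u=re^{i\theta}$, one finds $\operatorname{Re}\big(-t\Psi(u)\big)=t\rho\big(r-\tfrac{1}{r}\big)\sin\theta$, whose maximum over $\theta$ is $t\rho\big(\tfrac{1}{q}-q\big)$, so $|h(t)|\le\tfrac{2q^{k-1}}{1-q}\,e^{t\rho(1/q-q)}$. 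Finally I would substitute this into \eqref{eqn:aposteriori_im} with $\|H-\alpha I\|_2=2\rho$ and bound $\int_0^\tau e^{t\rho(1/q-q)}\,dt$ in two ways: by $e^{\tau\rho(1/q-q)}/\big(\rho(\tfrac{1}{q}-q)\big)$, which after the identity $q^{k-1}/(\tfrac{1}{q}-q)=q^k/(1-q^2)$ gives the $1/(1-q^2)$ branch, and by $\tau e^{\tau\rho(1/q-q)}$, which (using $\rho q^{k-1}=(\rho/q)q^k$) gives the $\tau\rho/q$ branch. Taking the smaller of the two reproduces the claimed $\min\{\cdot\}$.

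The main obstacle, and the only step requiring genuine work, is justifying the degenerate conformal-map picture and evaluating $\widehat M(r)$ cleanly — recognizing that the elliptic-function construction collapses to the Joukowski map on the segment and computing the maximal real part correctly. Once this is in place, the normality of $H_k$ eliminates the Crouzeix factor, and the remaining steps are the routine bookkeeping that converts $q^{k-1}$ into $q^k$ and yields the two branches of the minimum.
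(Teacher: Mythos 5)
Your proposal is correct and follows essentially the same route as the paper's proof: the shifted \emph{a posteriori} bound \eqref{eqn:aposteriori_im} with $\alpha=(\lambda_{\max}(H)+\lambda_{\min}(H))/2$, Faber (Chebyshev) approximation via the Joukowski-type exterior map of the spectral segment (the paper builds this same map as $\phi_3\circ\phi_2\circ\phi_1$ rather than invoking a degenerate-rectangle limit), normality of $iT_k$ to replace Crouzeix's constant by a spectral maximum, and the same computation of $M(r)=e^{t\rho(1/q-q)}$. Your two separate bounds on the integral are just an equivalent repackaging of the paper's inequality $e^x-1\le\min\{1,x\}e^x$, so the final $\min$ branches agree.
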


\begin{proof}
  Let $a=\lambda_{\min}(H)$ and $b=\lambda_{\max}(H)$. We first bound $h(t):=e_k^Te^{itT_k}e_1$ as in Theorem \ref{thm:bound_ht_re} by constructing a conformal map and using the Faber polynomial approximation. Let $\Phi:= \phi_3\circ\phi_2\circ\phi_1$ where $z_1=\phi_1(z)=-iz$ maps the exterior of $E:=\{i\lambda:\lambda\in[a,b]\}$ to the exterior of $[a,b]$, $z_2=\phi_2(z_1)=\frac{2}{b-a}\left(z_1-\frac{a+b}{2}\right)$ maps the exterior of $[a,b]$ to the exterior of $[-1,1]$, $w=\phi_3(z_2)=i(z_2+\sqrt{z_2^2-1})$ maps the exterior of $[-1,1]$ to $\{|w|>1\}$. In the definition of $\phi_3$, we choose the branch of $\sqrt{z^2-1}$ such that $\lim\limits_{z\mapsto\infty}\frac{\sqrt{z^2-1}}{z}=1$. Then $\Phi$ maps the exterior of $E$ to the exterior of the unit circle $\{|w|=1\}$ with $\rho:= \lim_{z\to\infty}\frac{z}{\Phi(z)}=\frac{b-a}{4}$. Construct the Faber polynomials from this conformal map $\Phi$ and the Faber polynomial approximation $\Pi_{k-2}$ of $f(z):=e^{tz}$ as defined in \eqref{eqn:faber_sum}. Let $r:=\frac{1}{q}>1$ and let $C_r$ be the inverse image under $\Phi$ of the circle $|w|=r$. Applying Theorem \ref{thm:razouk54} or \eqref{eqn:rbound}, the approximation error in $I(C_r)$ is bounded as,
  \begin{equation*}
    ||f-\Pi_{k-2}||_\infty\leq2\,M(r)\,\frac{(\frac{1}{r})^{{k-1}}}{1-\frac{1}{r}}=2 M(r)\,\frac{q^{{k-1}}}{1-q},
  \end{equation*}
  where $M(r)=\max\limits_{z\in C_r}|f(z)|$ and we note that the total rotation of $E$ (a line segment) is $V=2\pi$.

  To find $M(r)$, for any $z\in C_r$, we write $z=\Phi^{-1}(w)$ with $w=re^{i\theta}$ where $\theta\in[0,2\pi)$. Then, it follows from the definition of $\Phi$ that
  \begin{align*}
    z_2 & =\frac{1}{2}\left(-iw+\frac{1}{-iw}\right)=\frac{1}{2}\left(-i\frac{e^{i\theta}}{q}+\frac{iq}{e^{i\theta}}\right)=-\frac{i}{2}\left[\left(\frac{1}{q}-q\right)\cos\theta+i\left(\frac{1}{q}+q\right)\sin\theta\right], \\
    z_1 & =\frac{b-a}{2}z_2+\frac{b+a}{2}=\left[\frac{b-a}{4}\left(\frac{1}{q}+q\right)\sin\theta+\frac{b+a}{2}\right]-i\left[\frac{b-a}{4}\left(\frac{1}{q}-q\right)\cos\theta\right], \\
    z & =iz_1=\frac{b-a}{4}\left(\frac{1}{q}-q\right)\cos\theta+i\left[\frac{b-a}{4}\left(\frac{1}{q}+q\right)\sin\theta+\frac{b+a}{2}\right].
  \end{align*}
  Thus
  \begin{align*}
    M(r)=\max_{z\in C_r}|e^{tz}|=\max_{z\in C_r}e^{t\operatorname{Re}(z)}=e^{\frac{t(b-a)}{4}\left(\frac{1}{q}-q\right)}.
  \end{align*}

  Now, let $\lambda_j$ ($1\le j\le n$) be the eigenvalues of $iT_k$. Then $\lambda_j\subset E$. As in the proof of Theorem \ref{thm:bound_ht_re}, we have
  \begin{align*}
    |h(t)|
    & =|[f(iT_k)]_{k1}|
      =|[f(iT_k)]_{k1}-[\Pi_{k-2}(iT_k)]_{k1}| \\
    & \leq||f(iT_k)-\Pi_{k-2}(iT_k)||_2
      =\max_j |f(\lambda_j)-\Pi_{k-2}(\lambda_j)| \\
    & \leq\max_{z\in E}|f(z)-\Pi_{k-2}(z)| \leq ||f-\Pi_{k-2}||_\infty \\
    & \leq\frac{2q^{k-1}}{1-q}e^{\frac{t(b-a)}{4}\left(\frac{1}{q}-q\right)}.
  \end{align*}

   Finally, using \eqref{eqn:aposteriori_im} with $\alpha=(a+b)/2$, we have $\|H-\alpha I\|=(b-a)/2$ and hence
  \begin{align*}
    ||w(\tau)-w_k(\tau)||
    & \leq\frac{b-a}{2}\int_0^\tau\frac{2q^{k-1}}{1-q}e^{\frac{t(b-a)}{4}\left(\frac{1}{q}-q\right)}dt \\
    & =\frac{4 q^{k-1}}{(1-q)\left(\frac{1}{q}-q\right)}\left(e^{\frac{\tau(b-a)}{4}\left(\frac{1}{q}-q\right)}-1\right) \\
    & \le\frac{4q^{k}}{(1-q)\left(1-q^2\right)}\min\{1,\;\frac{\tau(b-a)}{4}\left(\frac{1}{q}-q\right)\}e^{\frac{\tau(b-a)}{4}\left(\frac{1}{q}-q\right)} \\
    & =\frac{4 q^{k}}{1-q}\min\{\frac{1}{1-q^2},\;\frac{\tau\rho}{q}\}e^{\tau\rho\left(\frac{1}{q}-q\right)}
  \end{align*}
  where we have used $e^x-1\le\min\{1,\;x\}e^x$ for any $x\ge0$.
\end{proof}

As before, we have an error bound for any given $q\in(0,1)$. Using smaller $q$ results in a faster geometrically decreasing term $q^k$, but $e^{\tau\rho\left(\frac{1}{q}-q\right)}$ is expected to be larger. So, again, we study the value of $q$ that minimizes the bound
\begin{equation}
  \label{eqn:eq_im}
  E(q):=\frac{q^k}{(1-q)(1-q^2)}e^{\tau\rho\left(\frac{1}{q}-q\right)},
\end{equation}
Taking derivative of $E(q)$ with respect to $q$ to get
\begin{equation*}
  \frac{dE}{dq}=\frac{q^{k-2}e^{\tau\rho\left(\frac{1}{q}-q\right)}}{(1-q)^3(1+q)^2}\left[\tau\rho q^4+(3-k)q^3+q^2+kq-\tau\rho\right].
\end{equation*}
With $E(q)\rightarrow\infty$ as $q\rightarrow0$ or $1$, the optimal value $q_0=q_0(k)$ that minimizes $E(q)$ is given by the solution of the equation
\begin{equation*}
  \tau\rho q^4+(3-k)q^3+q^2+kq-\tau\rho=0.
\end{equation*}
Note that it can be shown that the above equation has a unique solution $q_0\in(0,1)$ (see \cite{hwang} for details).

Note that $\frac{1}{1-q}$ in $E(q)$ is a well bounded term unless $q\approx1$. For example, it is bounded by $10$ if $q\le0.9$. To quantitatively interpret the bound, we can consider minimization of
\begin{equation}
  \label{eqn:esq_im}
  E_s(q)=q^ke^{\tau\rho\left(\frac{1}{q}-q\right)},
\end{equation}
which is essentially the same as $E(q)$ unless $q \approx 1$. Differentiate $E_s$ to get
\begin{equation*}
  \frac{dE_s}{dq}=e^{\tau\rho\left(\frac{1}{q}-q\right)}q^{k-2}\left[-\tau\rho q^2+kq-\tau\rho\right].
\end{equation*}
The discriminant of the quadratic $-\tau\rho q^2+kq-\tau\rho$ is $\Delta=k^2-4(\tau\rho)^2$. So, if $k\le2\tau\rho$, $E_s(q)$ is monotonically decreasing with the minimum occurring at $q_0 =1$. If $k>2\tau\rho$, $E_s(q)$ is minimized at $q_0 =\frac{k-\sqrt{k^2-4(\tau\rho)^2}}{2\tau\rho}<1$. Thus, the bound implies different convergence behavior at two stages of the Lanczos iterations.
\begin{enumerate}
  \item
  When $1\le k\le2\tau\rho$, there is essentially no decrease in the error bound.
  \item
  For $k>2\tau\rho$, the error bounds for subsequent steps decrease at least at the rate of $q_0$.
\end{enumerate}
The convergence behavior as implied from this theory is indeed what has been observed in the numerical examples (see \S\ref{sec:numerical_examples}), where the error initially stagnates for approximately $2 \tau \rho $ steps and then begins to decrease  superlinearly. Thus our bound qualitatively explains this convergence property observed numerically.

Finally, we note that the convergence bound for skew-Hermitian matrices have also been studied by Hochbruch and Lubich \cite[Theorem 4]{ye15}. It is proved there that for $k\geq2\rho\tau$,
\begin{equation}
  \label{eqn:hochbruck_im}
  ||w(\tau)-w_k(\tau)||\leq12e^{\frac{-(\rho\tau)^2}{k}}\left(\frac{e\rho\tau}{k}\right)^k.
\end{equation}
Interestingly, the range of validity of the bound coincides with the point of initial convergence as implied by our bound. It turns out that this bound can be implied from a special case of our error bound \eqref{eqn:apriori_im}. For $k\geq2\rho\tau$, let $q=\frac{\tau\rho}{k}\leq\frac{1}{2}$. Then our bound \eqref{eqn:apriori_im}, simply using $1/(1-q^2)$ for the minimum, reduces to \eqref{eqn:hochbruck_im} as follows:
\begin{align*}
  ||w(\tau)-w_k(\tau)||
  & \leq\frac{4\left(\frac{\tau\rho}{k}\right)^k}{(1-\frac{1}{2})(1-\frac{1}{2})^2}e^{\tau\rho\left(\frac{k}{\tau\rho}-\frac{\tau \rho}{k}\right)} \\
  & =\frac{32}{3}e^{-\frac{(\tau\rho)^2}{k}}\left(\frac{e\tau\rho}{k}\right)^k
  \leq12e^{-\frac{(\tau \rho)^2}{k}}\left(\frac{e\tau\rho}{k}\right)^k.
\end{align*}


\section{Numerical examples}
\label{sec:numerical_examples}

In this section, we present several numerical examples to demonstrate the error bounds obtained in this paper. All tests were carried out on a PC in MATLAB (R2013b) with the machine precision $\approx2e-16$. The Jacobi elliptic integrals that are needed for our bounds are computed using MATLAB built-in functions {\tt ellipticK} and {\tt ellipticE}.

We will construct several testing matrices with different spectral distributions and compare the actual approximation error with the new {\em a posterior} error estimate \eqref{eqn:aposteriori_re} and {\em a priori} bounds \eqref{eqn:apriori_re} or \eqref{eqn:apriori_im}. The integral in the {\em a posterior} error estimate \eqref{eqn:aposteriori_re} is approximated using Simpson's rule  with $10$ subintervals on $[0,\tau]$.

We shall compare our bounds with the bounds by Saad \cite{ye24} and where applicable with those of Hochbruck and Lubich \cite{ye15} as well. For example, if the matrices are positive semidefinite, we consider the following bound of Saad \cite[Cor. 2.2]{ye24}:
\begin{equation}
  \label{eqn:saad_re}
  ||w(\tau)-w_k(\tau)||\leq\frac{2}{k!}(\tau||A||)^k.
\end{equation}
and the following bound of Hochbruck and Lubich \cite[Theorem 2]{ye15}:
\begin{equation}
  \label{eqn:hochbruck_re}
  ||e^{-\tau A}v-V_ke^{-\tau H_k}e_1||\leq12e^{-\rho\tau}\left(\frac{e\rho\tau}{k}\right)^k,
\end{equation}
which holds for $k\geq2\rho\tau$ and with the assumption that the field of values $W(A)$ is contained in the disk $|z-\rho|<\rho$.

\emph{Example 1.}
Given an odd integer $N$ and a rectangle $[a,b]\times[-c,c]$ in the complex plane where $a$, $b$ and $c$ are all positive real numbers, let $A$ be the $N^2\times N^2$ block diagonal matrix with the diagonal blocks being $2\times2$ matrices $B_{\ell,j}$ for $\ell=1,2,\cdots,N$ and $j=1,2,\cdots,\frac{N-1}{2}$, where
\begin{equation*}
  B_{\ell,j} =
  \left[
    \begin{array}{cc}
      x_\ell & y_j \\
      -y_j & x_\ell
    \end{array}
  \right], \;
  x_\ell= a+\frac{(\ell-1)(b-a)}{N-1}
  \;\;\mbox{ and }\;\;
  y_j = \frac{2jc}{N-1}.
\end{equation*}
Then, the eigenvalues of $A$ are $x_l\pm i y_j$ with $i$ being the imaginary unit, which are the grid points of the $N\times N$ lattice on $[a,b]\times[-c,c]$. Clearly, $A$ is a normal matrix, so the field of values of $A$ is the convex hull of its eigenvalues, i.e., the rectangle $[a,b]\times[-c,c]$.

The primary purpose of this numerical test is to compare our {\em a priori} bound with Hochbruch and Lubich's bound \eqref{eqn:hochbruck_re}. The latter is applicable when $W(A)$ is contained in a disk $|z-\rho|<\rho$. We therefore choose $[a,b]\times[-c,c]$ to be the square $[1-\frac{\sqrt{2}}{2},1+\frac{\sqrt{2}}{2}]\times[-\frac{\sqrt{2}}{2},\frac{\sqrt{2}}{2}]$ which is enclosed in the circle $|z-1|<1$ and construct a matrix $A$ as above such that the eigenvalues of $A$ form a $31\times31$ lattice in the square. We apply the Arnoldi method to compute $e^{-\tau A}v$ where $v$ is a random normalized vector and we use $\tau=10,20,30,40$. In Figure \ref{fig:ex1}, we plot against the iteration number the actual error $||w(\tau)-w_k(\tau)||$ in the solid line, the {\em a posteriori} error estimate \eqref{eqn:aposteriori_re} in the $+$-line, our {\em a priori} bound \eqref{eqn:apriori_re} in the dashed line, Hochbruch and Lubich's bound \eqref{eqn:hochbruck_re} in the dotted line, and Saad's bound \eqref{eqn:saad_re} in the x-line. Note that Hochbruch and Lubich's bound is only valid for $k\geq2\rho\tau$.

\begin{figure}[!h]
  \centering
  \includegraphics[scale=0.4]{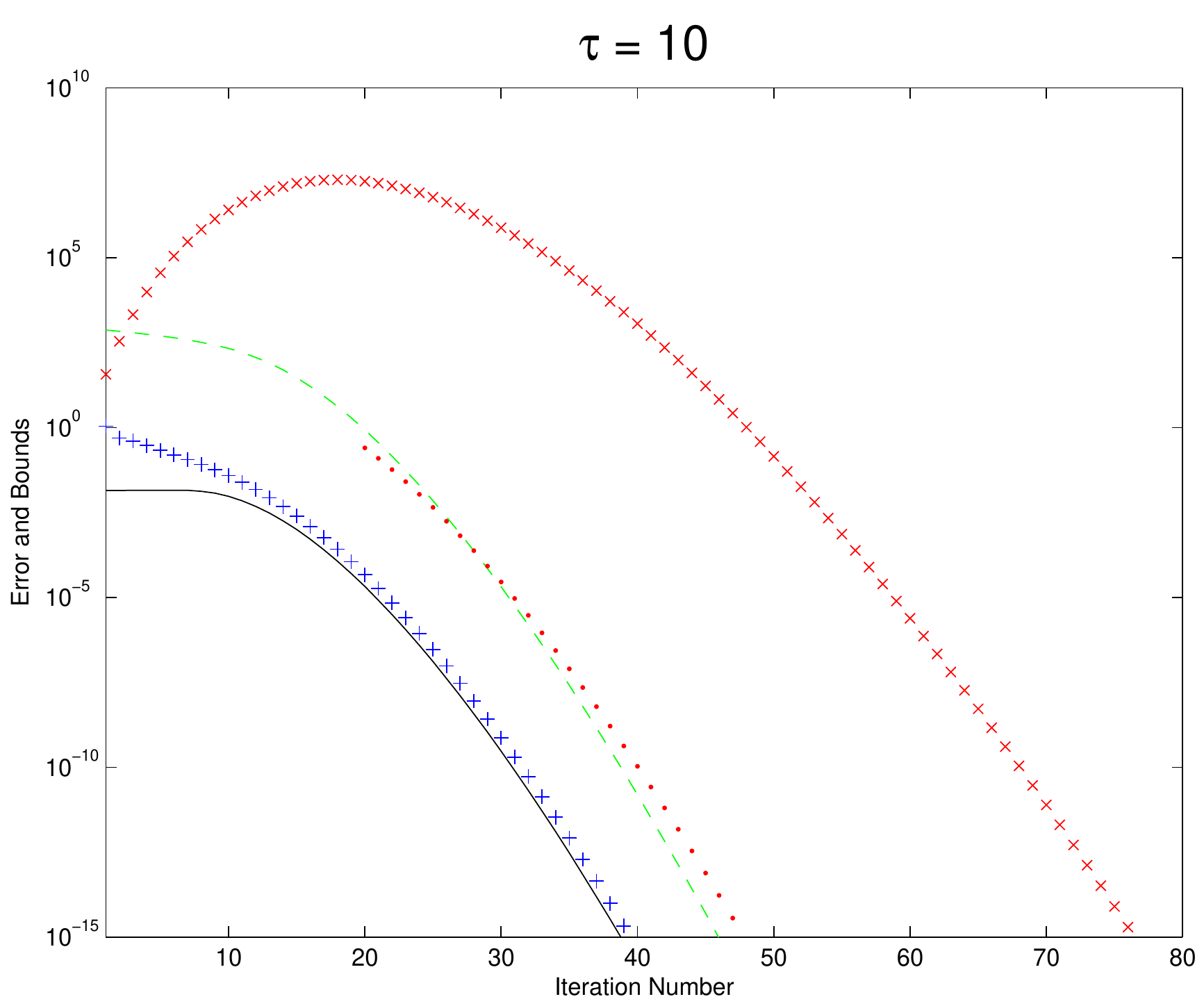}
  \includegraphics[scale=0.4]{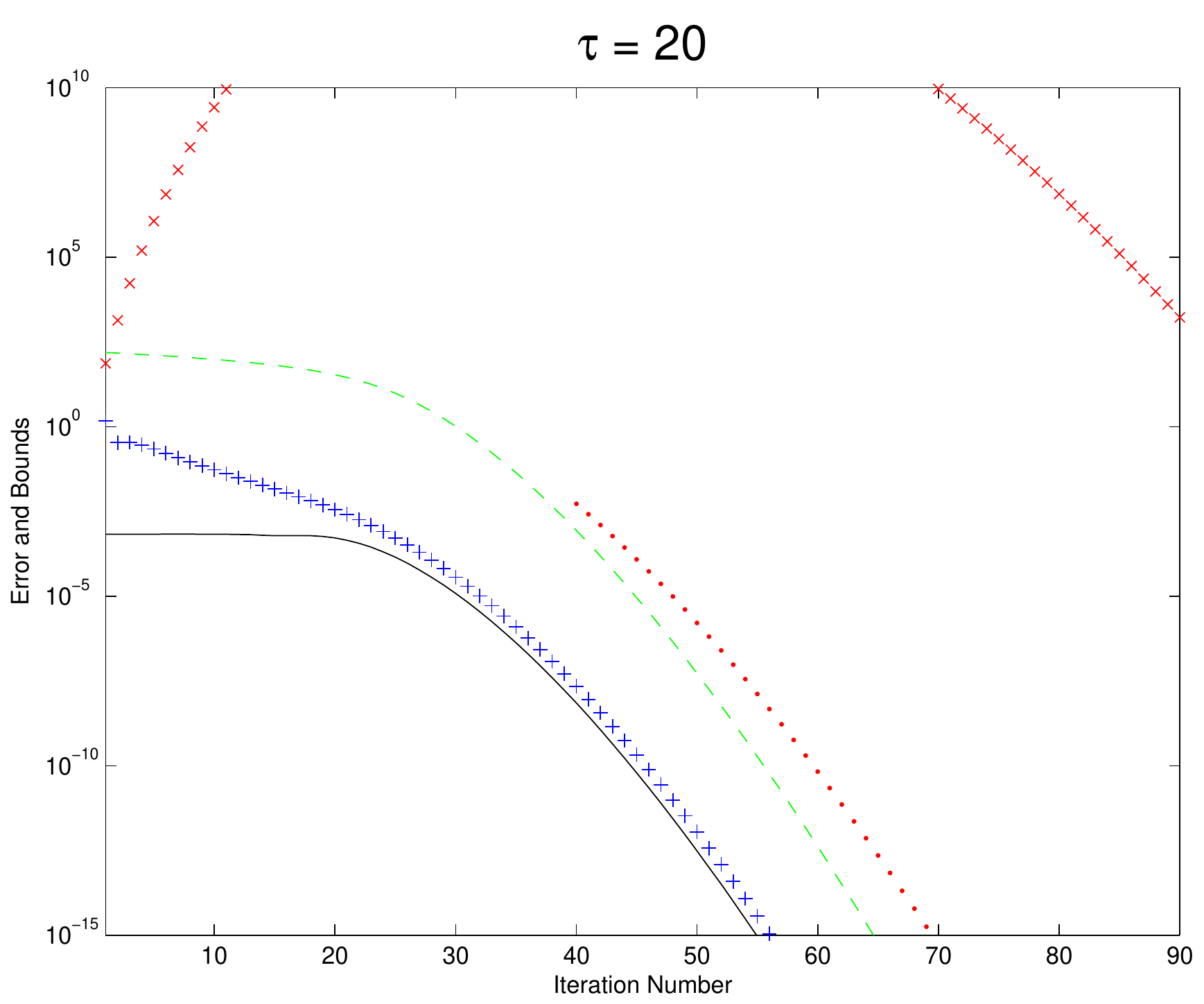}
  \includegraphics[scale=0.4]{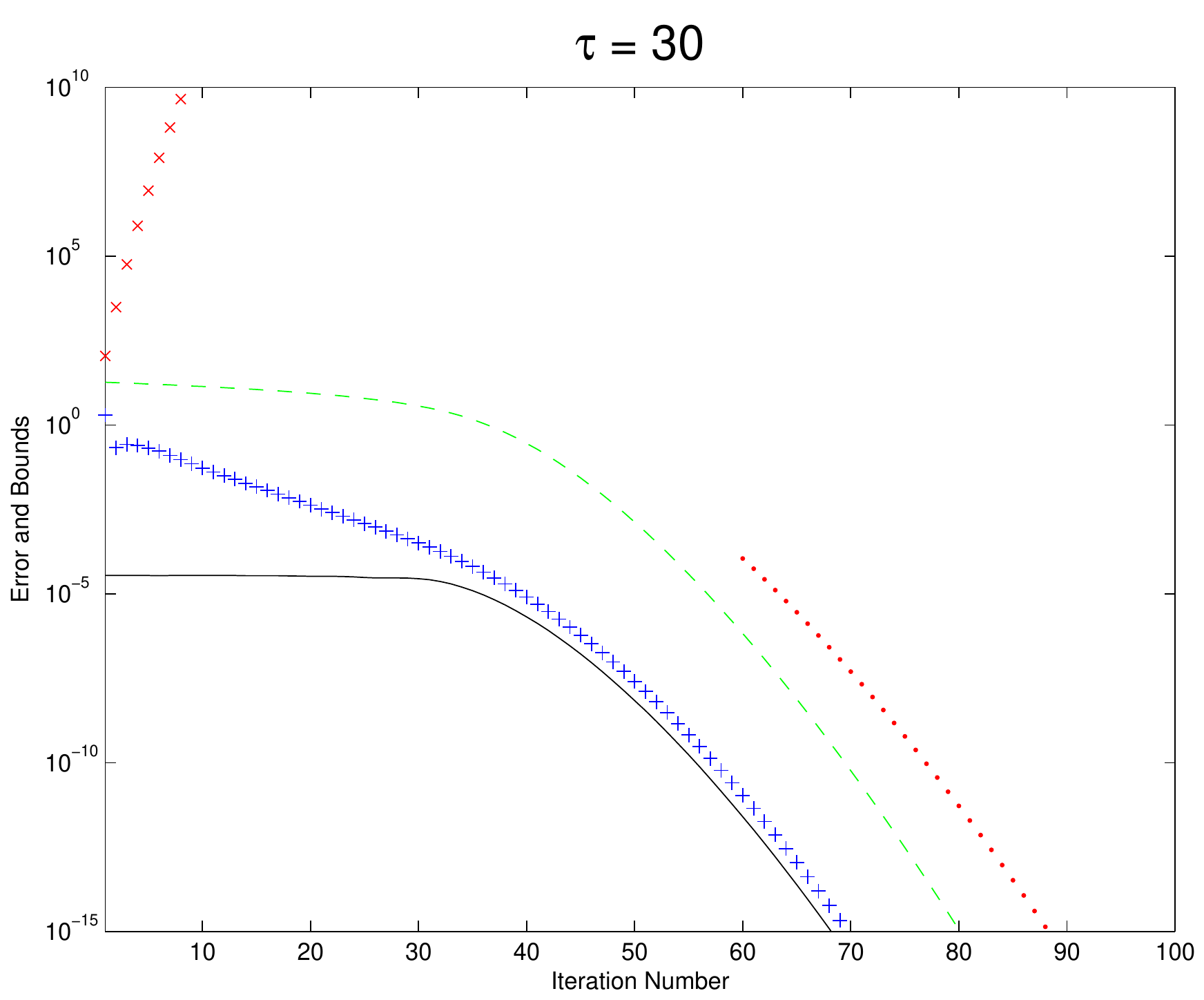}
  \includegraphics[scale=0.4]{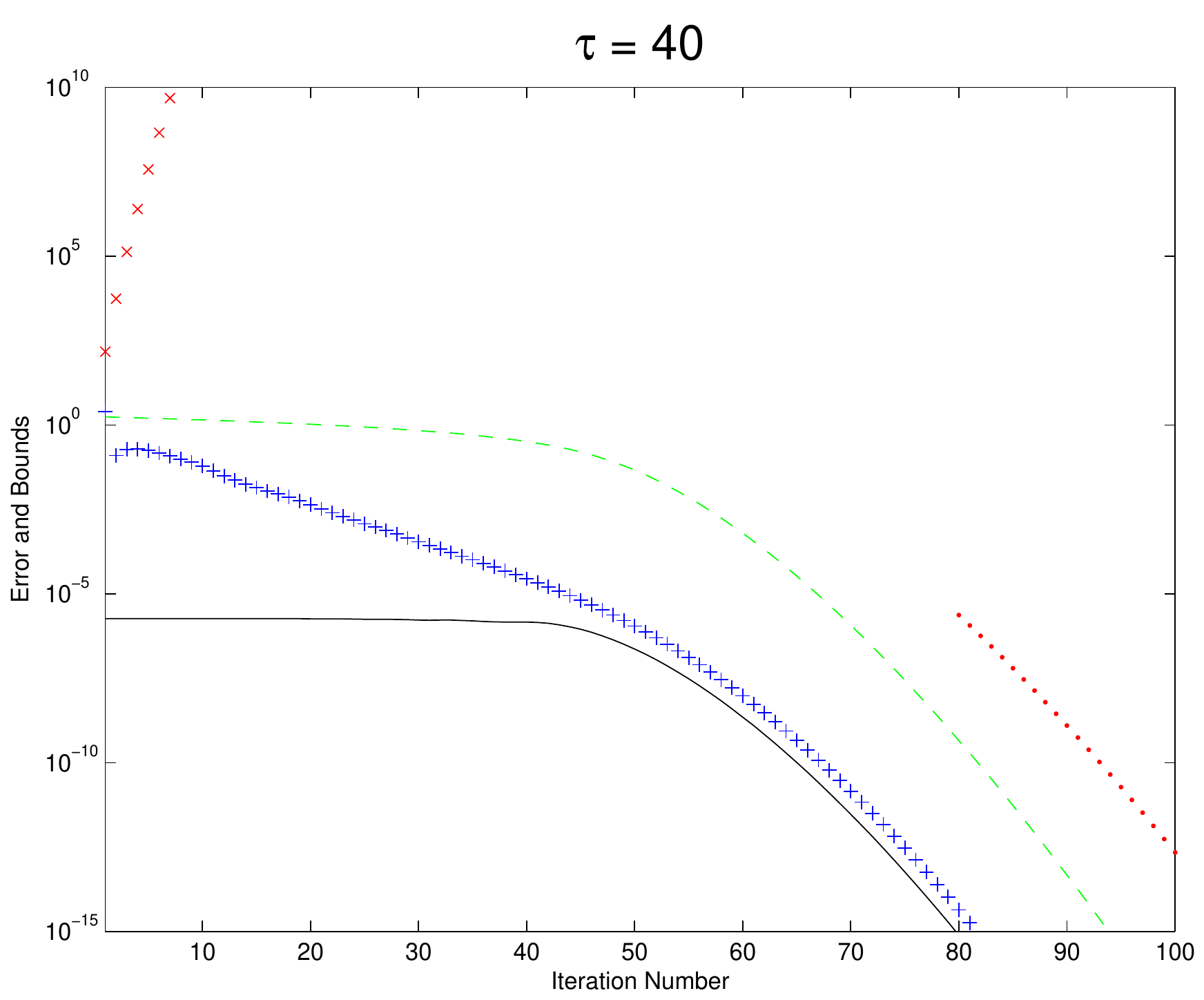}
  \caption
  {
    Example 1.
    $W(A)$ in $|z-1|<1$ and
    $\tau=10,20,30,40$.
    Error (solid), our {\em a posteriori} bound (+), our {\em a priori} bound (dashed), Saad's bound (x), and Hochbruck and Lubich's bound (dotted).
  }
  \label{fig:ex1}
\end{figure}

We observe that when $\tau$ is relatively small, our new {\em a priori} bound is comparable to Hochbruch and Lubich's bound, but as $\tau$ increases, our bound improves significantly. In particular, for larger $\tau$ values, the error $||w(\tau)-w_k(\tau)||$ first stagnates for certain number of iterations before it starts to converge. Our {\em a priori} bound nicely captures this behavior and the point where the convergence begins, while Hochbruch and Lubich's bound is pessimistic and is applicable to iterations long after the initial point of convergence. Our {\em a posteriori} error estimate is sharp at the convergence stage for all tests.

In the next example, we use the same construction as in Example 1, but consider  the field of values contained in rectangles of different shape. This is to investigate the influence on the convergence rate by the shape of the rectangle through the parameter $m$ in \eqref{eqn:m_lambda}.

\emph{Example 2.}
For a given parameter $m\in(0,1)$, we determine the dimensions of the rectangle $\alpha$ and $\beta$ by $\alpha=E'-mK',\;\beta=E-m_1K$. We then construct a matrix as in Example 1 whose field of values is contained in the rectangle $[0,2\alpha]\times[-\beta,\beta]$. We use $m\in\{0.01,0.1,0.9,0.99\}$ whose corresponding values of $\alpha, \beta$ are listed in Figure \ref{fig:ex2}. Note from Section 3.3 that $m\approx0$ means that the matrix is close to being Hermitian, and that $m\approx1$ means the matrix is close to being skew-Hermitian with a real spectral shift. We apply the Arnoldi method to compute $e^{-\tau A}v$ for a random normalized vector $v$ and we use $\tau=30$ to give $\tau A$ a moderate norm. In Figure \ref{fig:ex2} we plot the error $||w(\tau)-w_k(\tau)||$ in the solid line, our {\em a posteriori} error estimate \eqref{eqn:aposteriori_re} in $+$-line, our {\em a priori} bound \eqref{eqn:apriori_re} in the dashed line and Saad's bound \eqref{eqn:saad_re} in the x-line.

\begin{figure}[!h]
  \centering
  \includegraphics[scale=0.4]{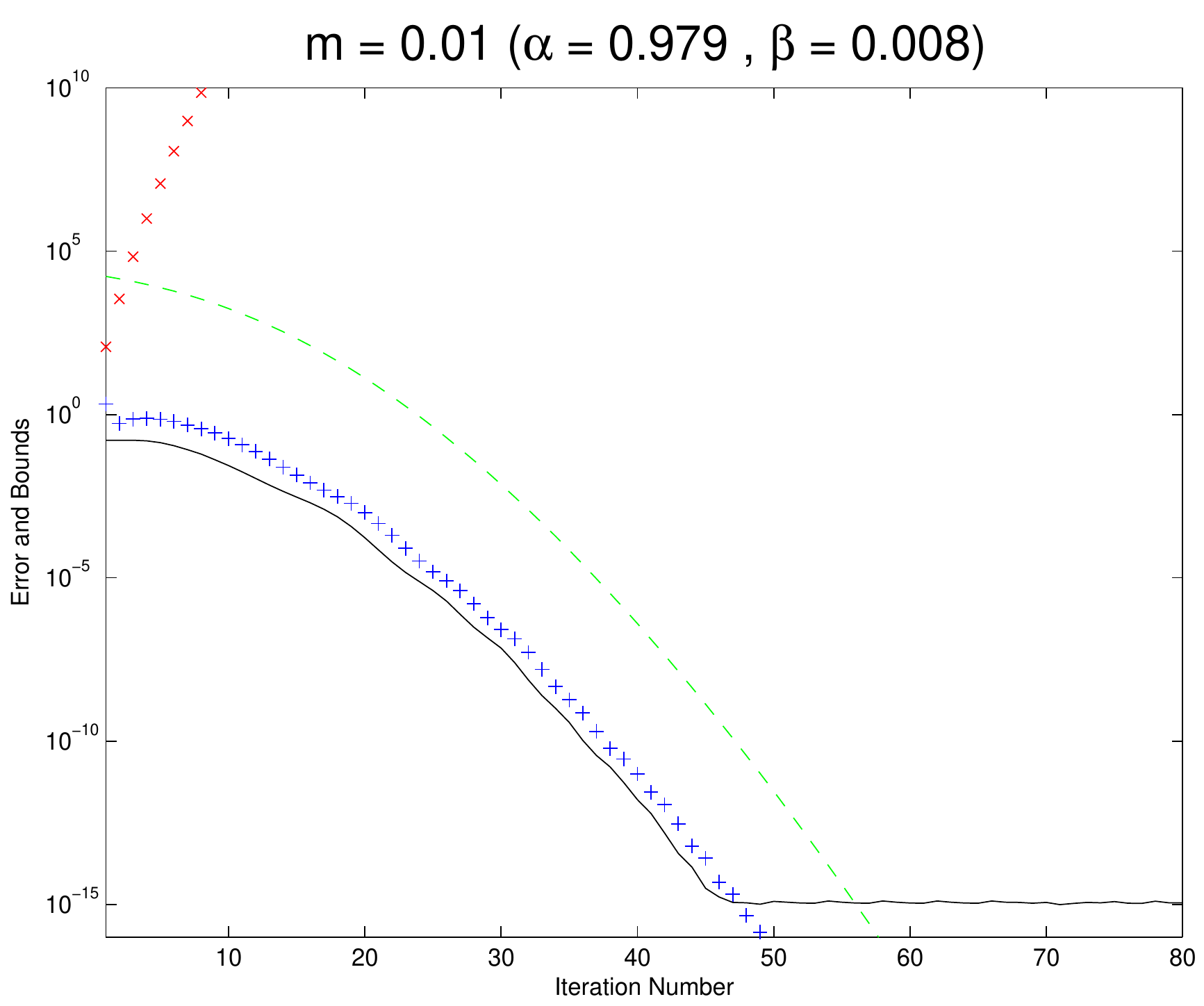}
  \includegraphics[scale=0.4]{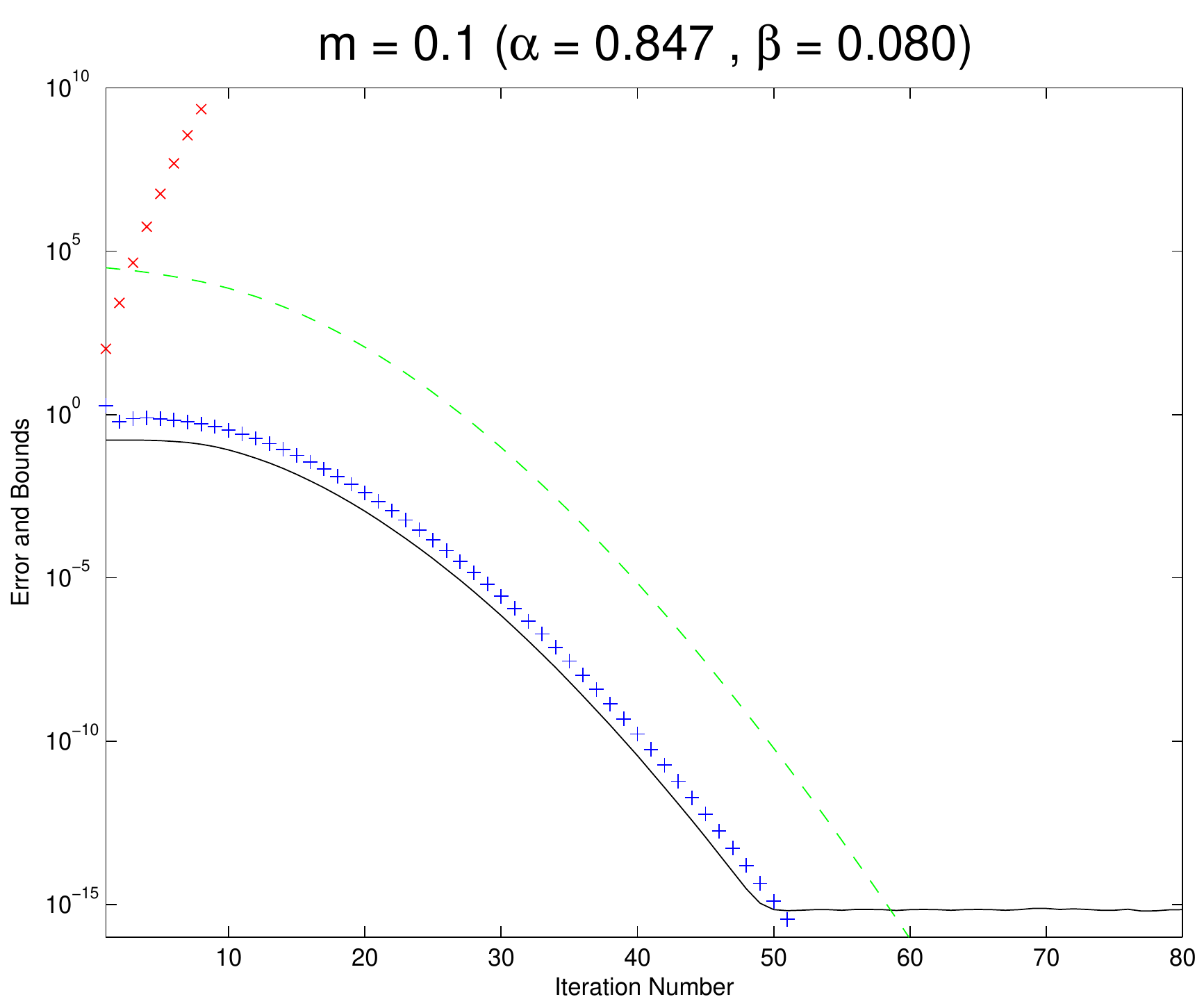}
  \includegraphics[scale=0.4]{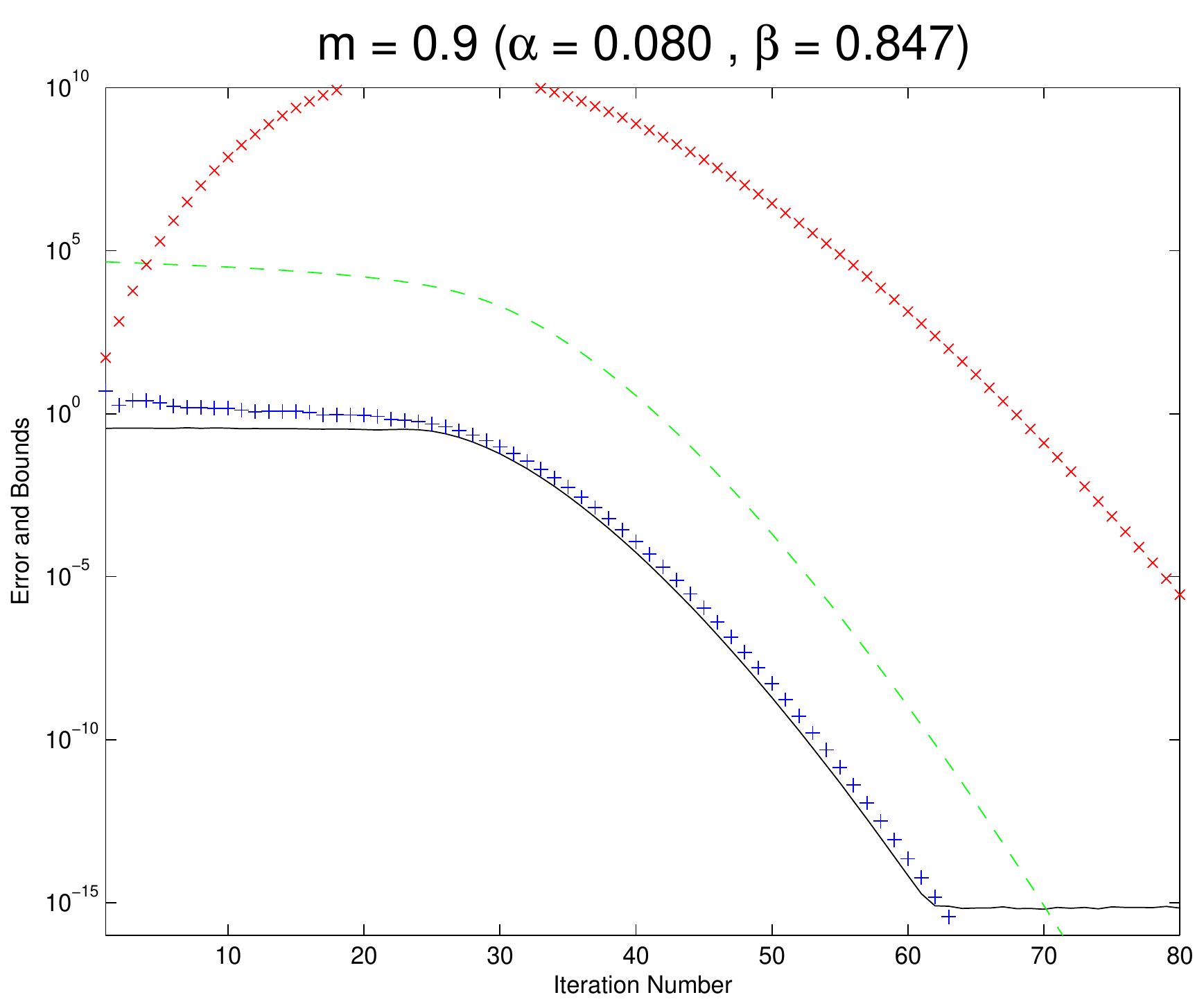}
  \includegraphics[scale=0.4]{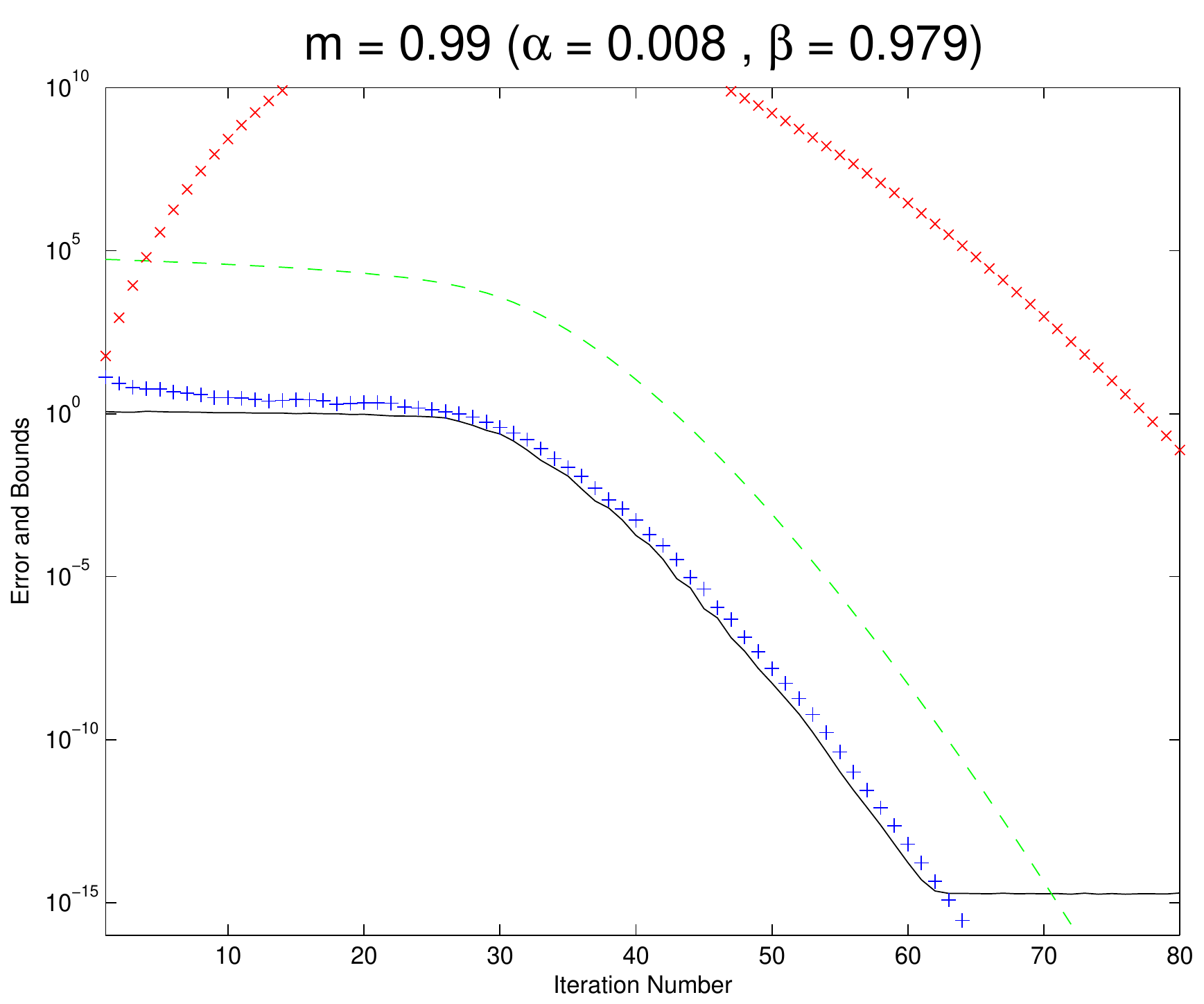}
  \caption
  {
    Example 2.
    $m=0.01,0.1$ (top) and $m=0.9,0.99$ (bottom).
    Error(solid), our {\em a posteriori} bound (+), our {\em a priori} bound (dashed), Saad's bound (x).
  }
  \label{fig:ex2}
\end{figure}

Figure \ref{fig:ex2} shows that the convergence is related to $m$. For smaller $m$ when the eigenvalues lie close to the real axis, the convergence occurs at early iterations and at a faster rate. As $m$ increases to 1, the convergence has an initial stagnation stage before the convergence occurs. Again, this behavior is captured in our new {\em a priori} bound. Our new bound also significantly improves Saad's, which is based on the norm of the matrix only. Our {\em a posteriori} error estimate is sharp for all tests.

We further demonstrate our new bounds for non-positive definite matrices. We construct as in Example 1 a matrix $A$ whose field of values is contained in the square $[\sigma,2+\sigma]\times[-1,1]$ with $\sigma=-1$ and $-10$. We plot in Figure \ref{fig:ex2_shift} the actual error (solid), {\em a posteriori} bound (+), {\em a priori} bound (dashed) and Saad's bound (x). We see that our bounds are still valid when $A$ is not positive definite. They also demonstrate the initial stagnation of convergence. However, the bound becomes more pessimistic for larger $\sigma$.

\begin{figure}[!h]
  \centering
  \includegraphics[scale=0.4]{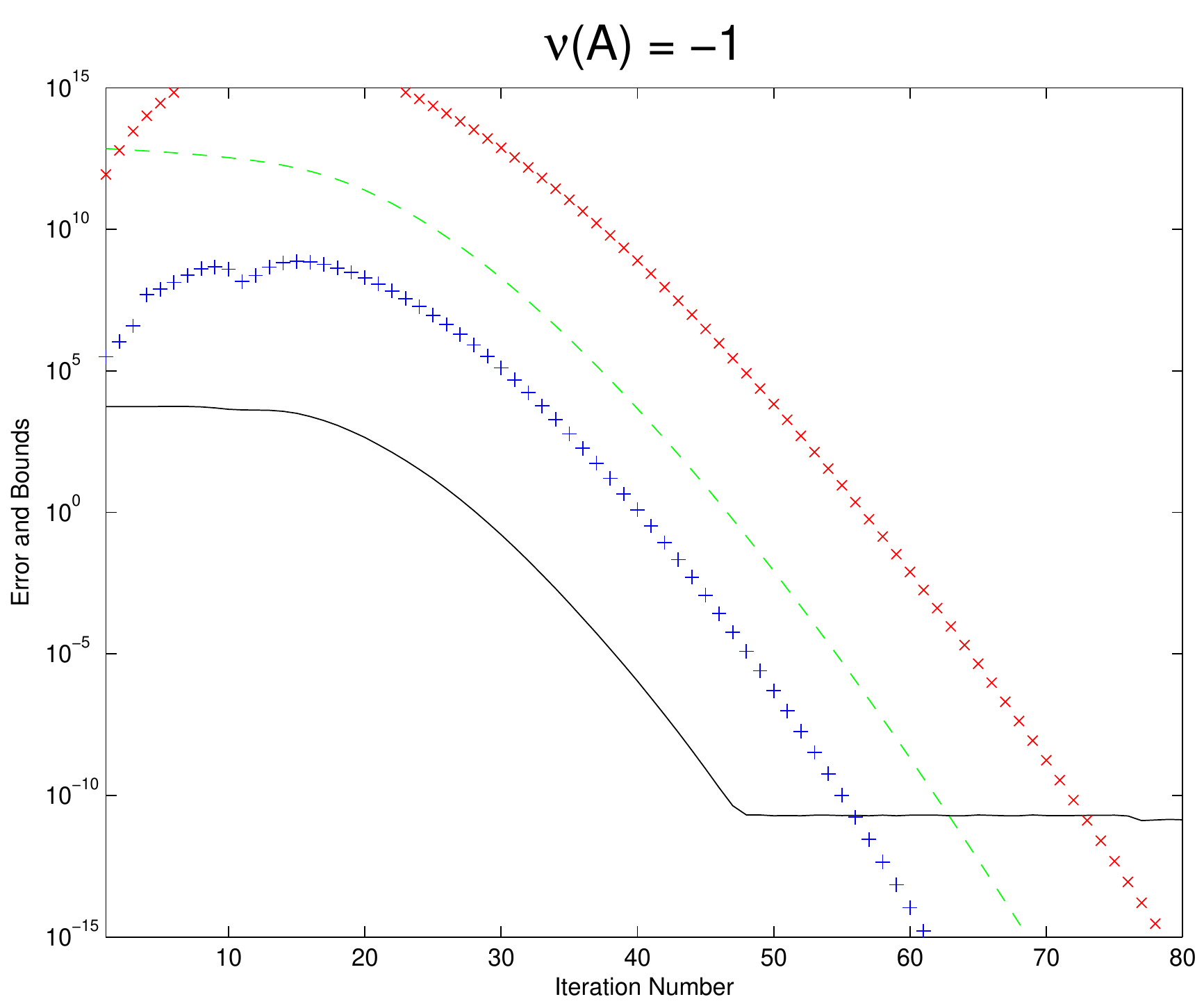}
  \includegraphics[scale=0.4]{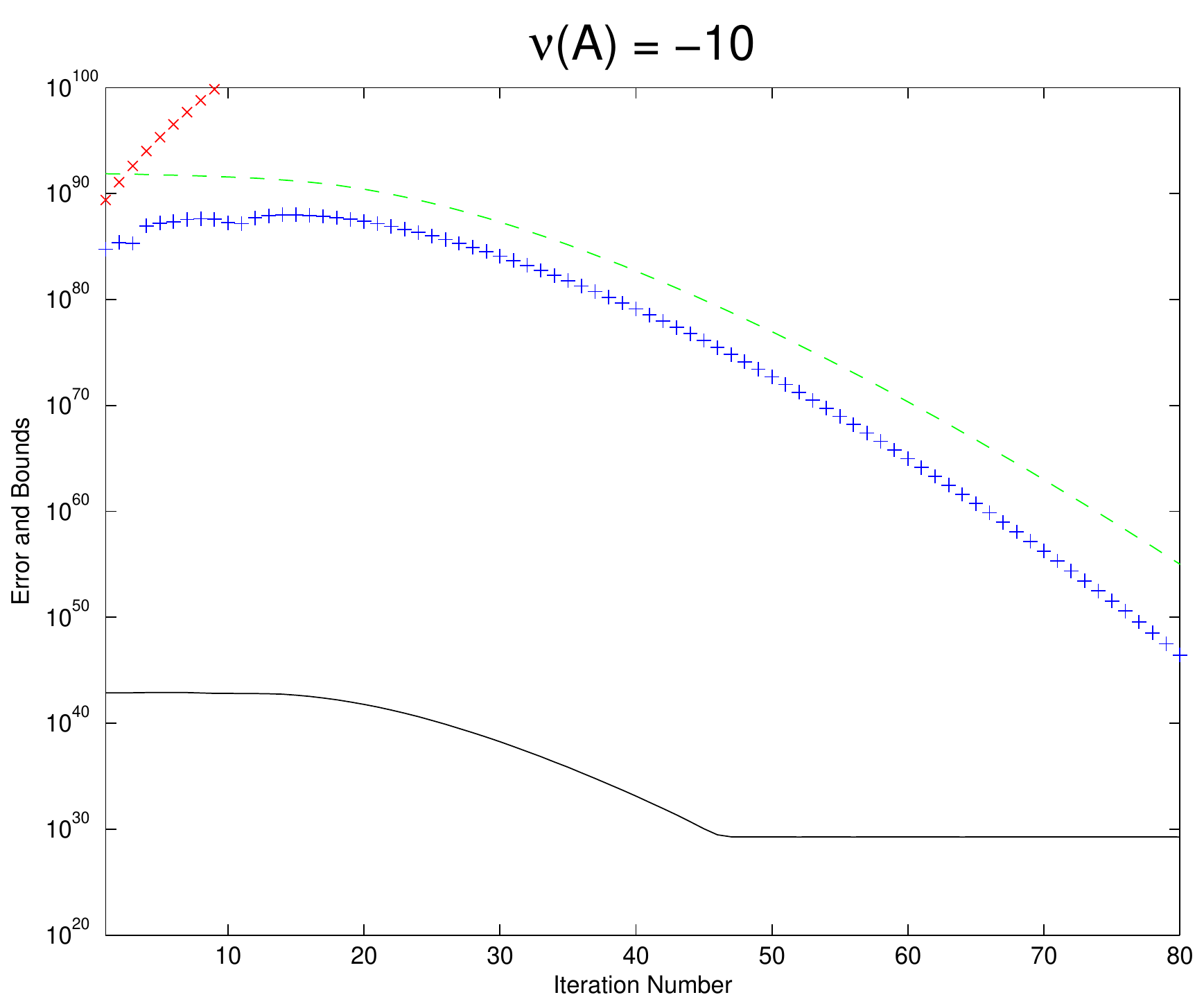}
  \caption
  {
    Example 2.
    Non-positive definite matrix with negative $\nu(A)$.
    Error (solid), our {\em a posteriori} bound (+), our {\em a priori} bound (dashed), and Saad's bound (x).
  }
  \label{fig:ex2_shift}
\end{figure}

In the next example, we consider matrices arising in the convection diffusion equation
\begin{equation}
  \label{eqn:convection_diffusion}
  \frac{\partial}{\partial t}u(x,y)=\triangle u(x,y)-u_x(x,y)-u_y(x,y),\;\;u=0\;\mbox{ in }\;\partial\Omega
\end{equation}
where $(x,y)\in \Omega=[0,1]^2$. The finite-difference discretization in $x,y$ with a uniform mesh leads to an initial value problem \eqref{eqn:ode_nonhomogeneous} and hence the problem of computing $w(\tau)=e^{-\tau A}v$.

\emph{Example 3.}
Let $-A$ be the finite-difference discretization of \eqref{eqn:convection_diffusion} in a $20\times20$ grid in $[0,1]^2$ scaled with $h^2$ so that $||A||_2\approx 8$. Then $A$ is non-Hermitian but positive definite. Let $v$ be a random vector with $||v||_2=1$ and we compute the matrix exponential $w(\tau)=e^{-\tau A}v$. We use various values of $\tau=2,10,20,50$ and apply the Arnoldi method to $A$ and $v$ and the results are presented in Figure \ref{fig:ex3} with $||w(\tau)-w_k(\tau)||$ in the solid line, our {\em a posteriori} error estimate \eqref{eqn:aposteriori_re} in the $+$-line, our {\em a priori} bound \eqref{eqn:apriori_re} in the dashed line and Saad's bound \eqref{eqn:saad_re} in the x-line.

\begin{figure}[!h]
  \centering
  \includegraphics[scale=0.4]{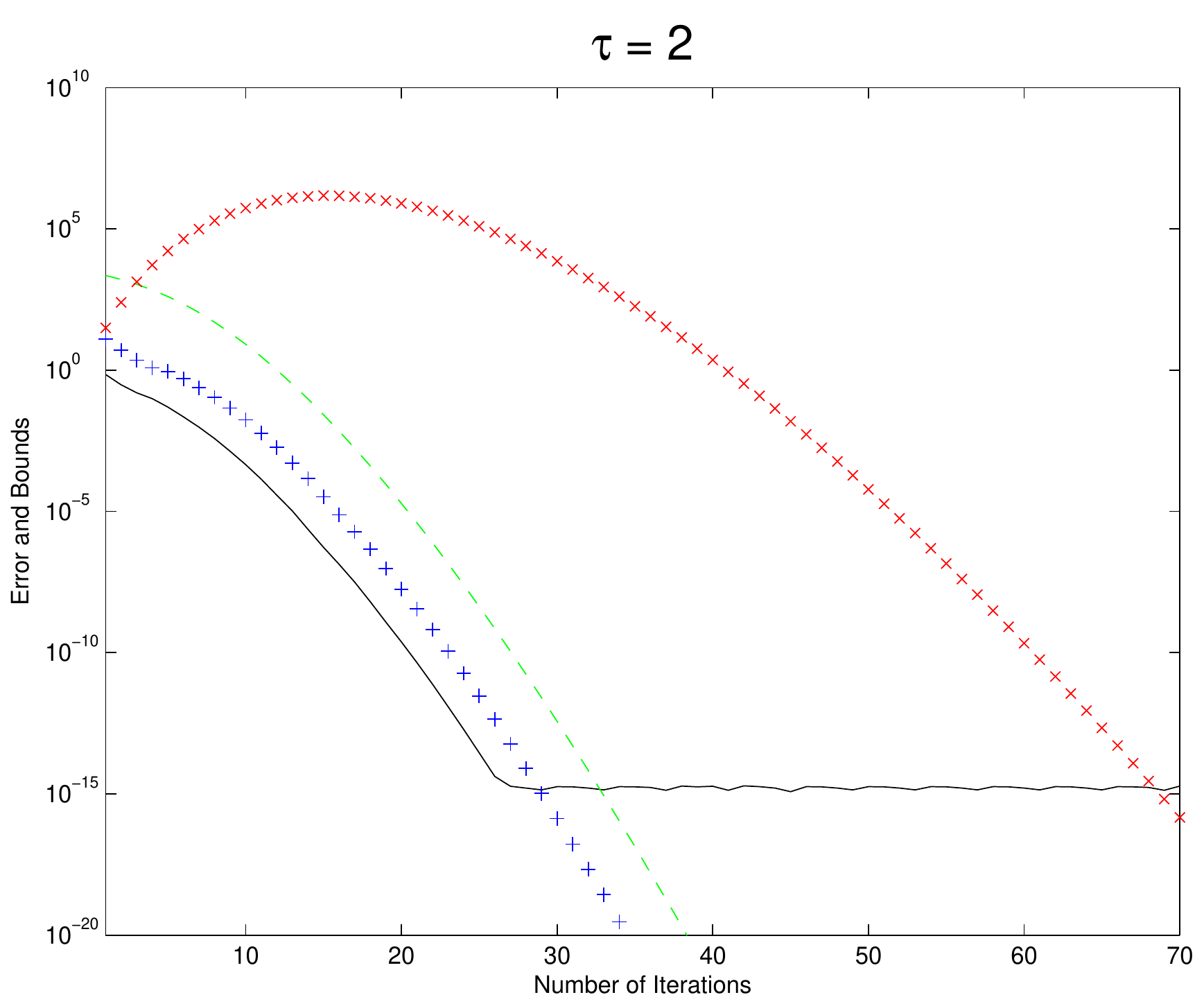}
  \includegraphics[scale=0.4]{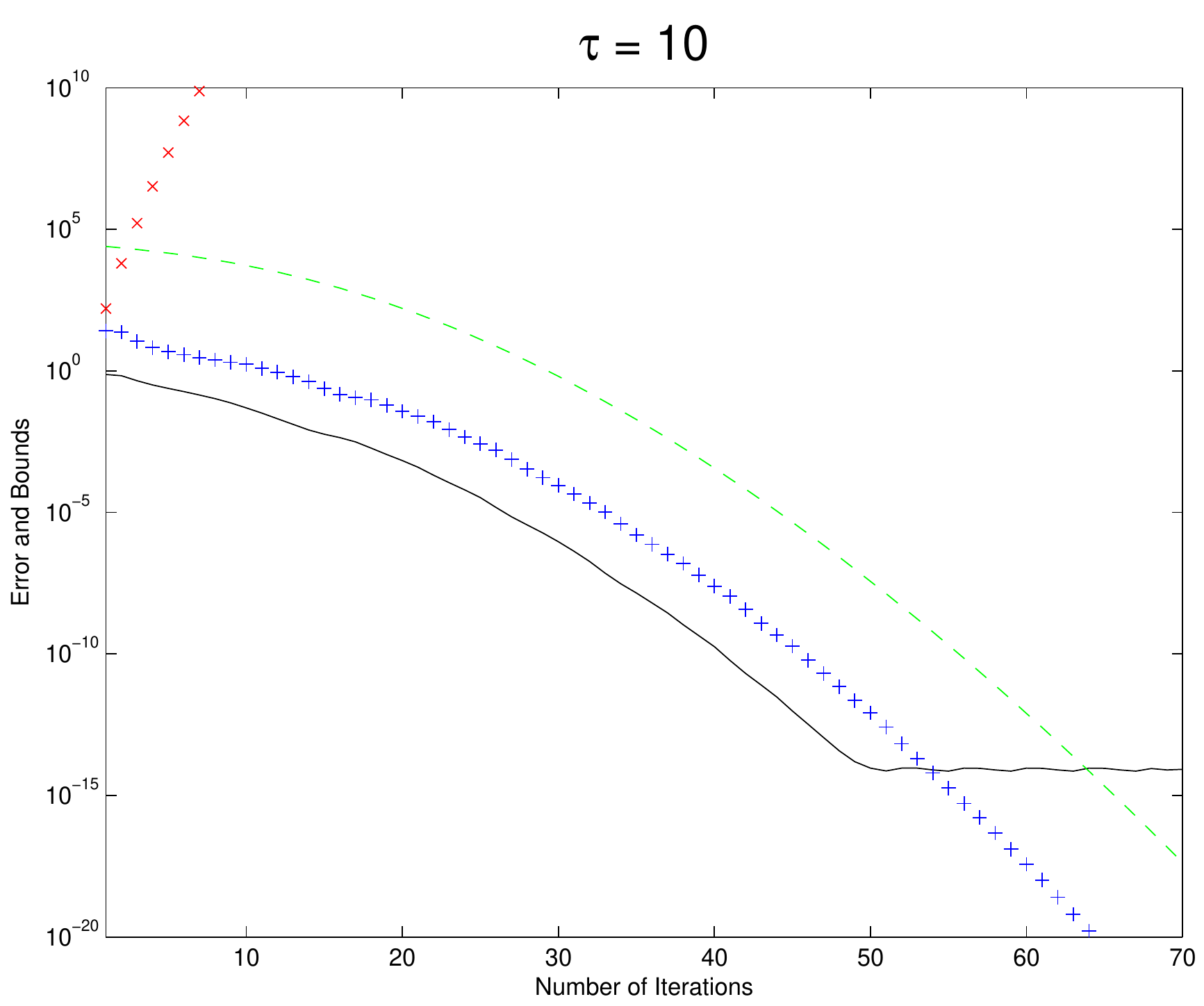}
  \includegraphics[scale=0.4]{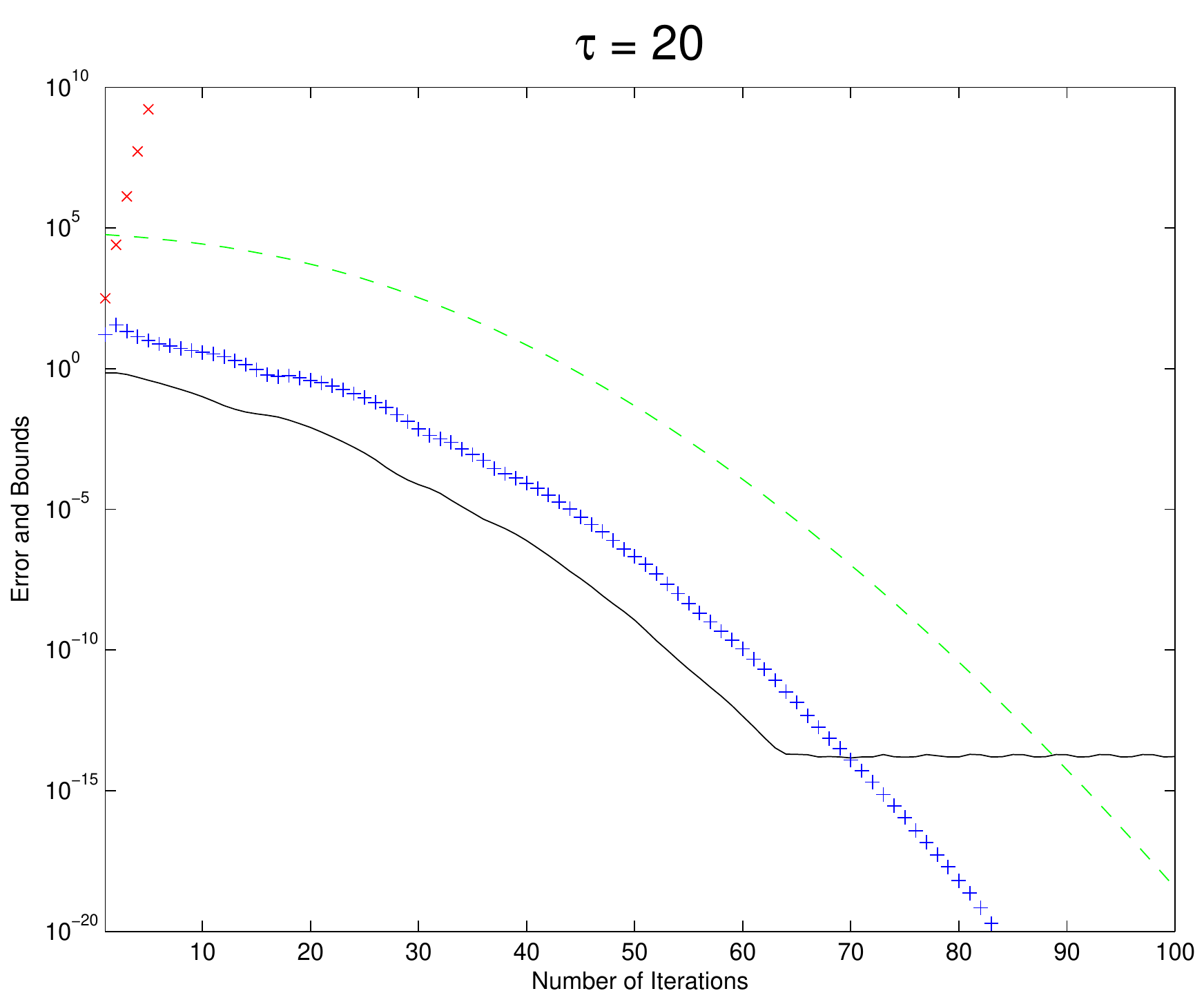}
  \includegraphics[scale=0.4]{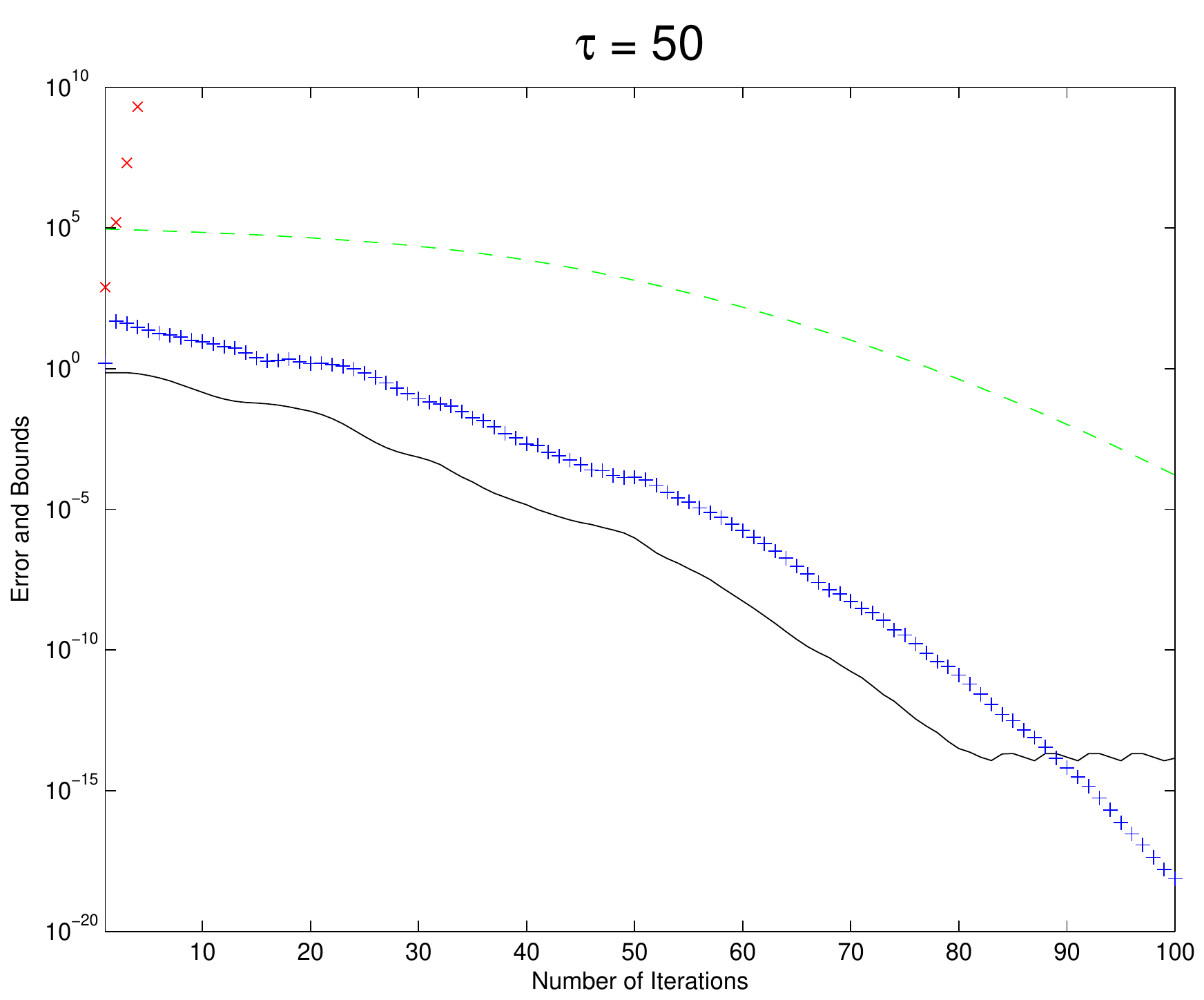}
  \caption
  {
    Example 3.
    $\tau=2,10,20,50$.
    Error(solid), {\em a posteriori} bound (+), {\em a priori} bound (dashed), Saad's bound (x).
  }
  \label{fig:ex3}
\end{figure}

We observe that for $\tau=2$, our {\em a priori} bound is already a significant improvement on the classical bound by Saad. For modestly large values of $\tau$, Saad's bound becomes very pessimistic due to the large norm of $\tau A$, while our {\em a priori} bound still follows the convergence curve of the error. For the case when $\tau=50$ ($\tau||A||_2\approx 400$) or larger, our {\em a priori} bound also becomes very pessimistic. In all the cases, our {\em a posteriori} error estimate remains sharp.

Our final example  concerns skew-Hermitian matrices. 

\emph{Example 4.}
Let $H$ be an $n\times n$ diagonal matrix whose $j$-th diagonal entry is $j/n$. Let $v$ be a random $n\times1$ normalized vector. Then $||H|||_2=1$ and the spectral gap $4\rho=\lambda_{\max}(H)-\lambda_{\min}(H)$ is approximately 1. We apply $k$ iterations of the Lanczos method to compute $w(\tau)=e^{i\tau H}v$. We will test $n=1000$ with $\tau=2,10,20,50$ and the results are presented in Figure \ref{fig:ex4} with $||w(\tau)-w_k(\tau)||$ in the solid line, our {\em a posteriori} error estimate \eqref{eqn:aposteriori_im0} in the $+$-line, our {\em a priori} bound \eqref{eqn:apriori_re} in the dashed line, Hochbruch and Lubich's bound \eqref{eqn:hochbruck_im} in the dotted line, and Saad's bound in the x-line.

\begin{figure}[!h]
  \centering
  \includegraphics[scale=0.4]{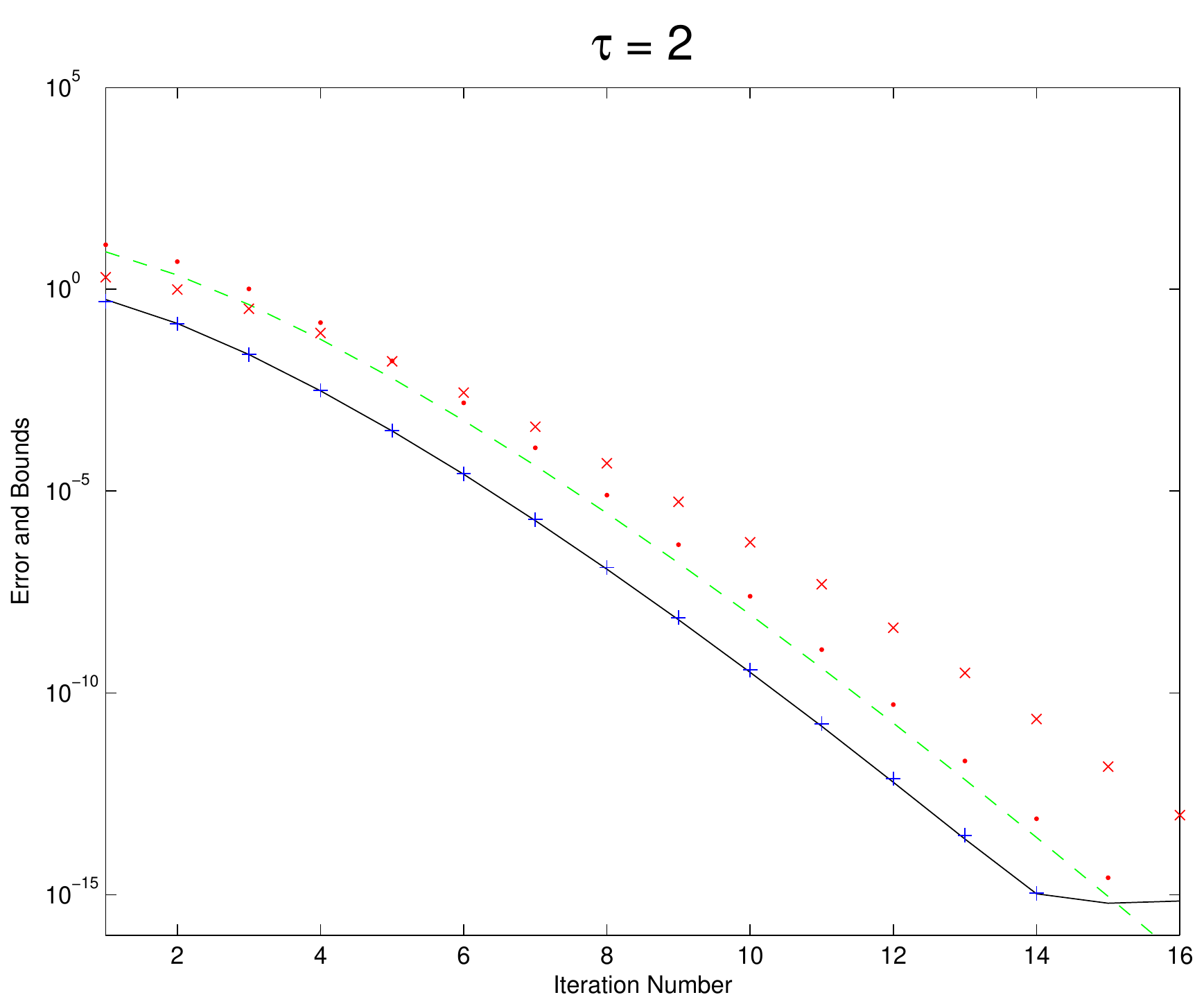}
  \includegraphics[scale=0.4]{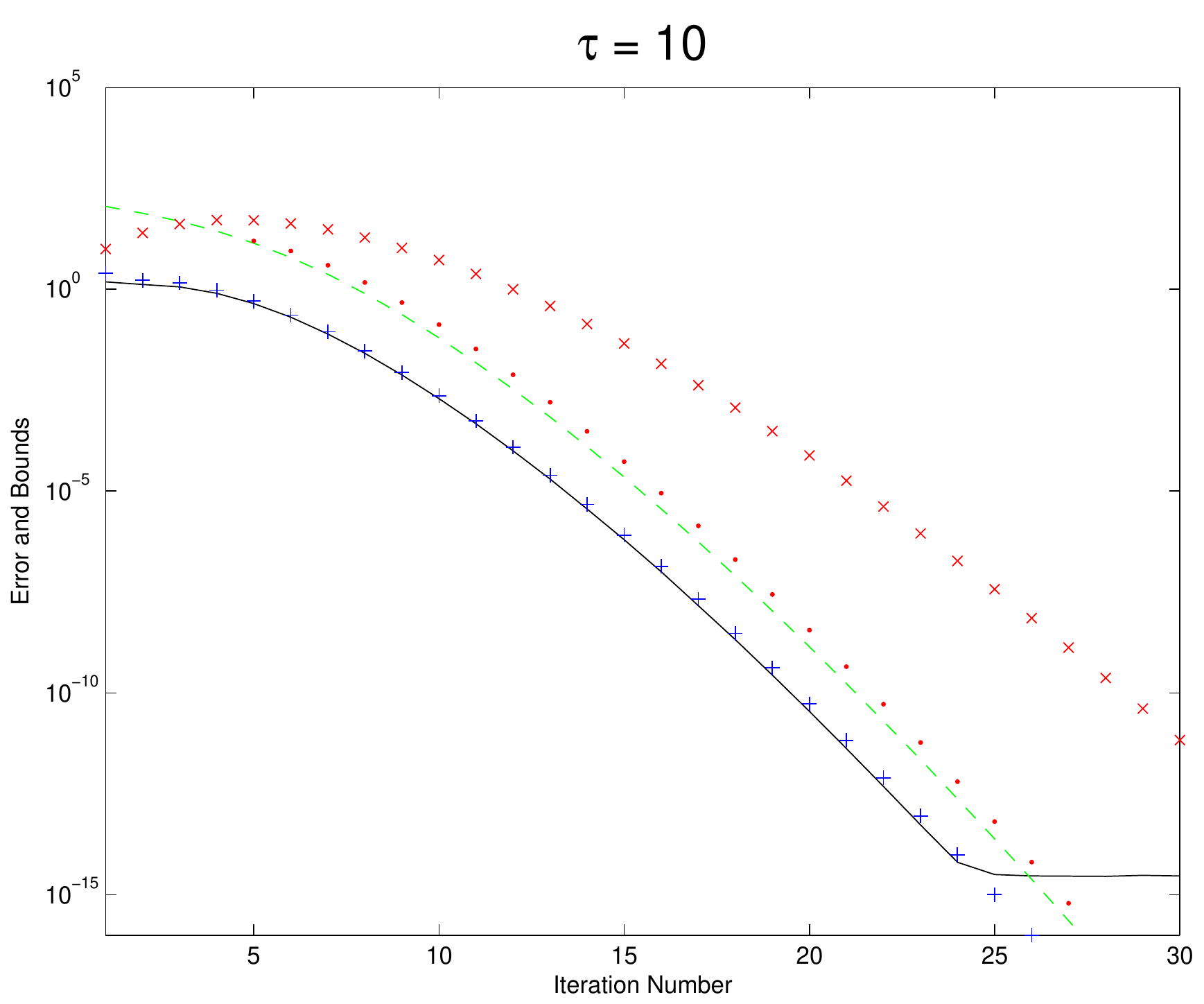}
  \includegraphics[scale=0.4]{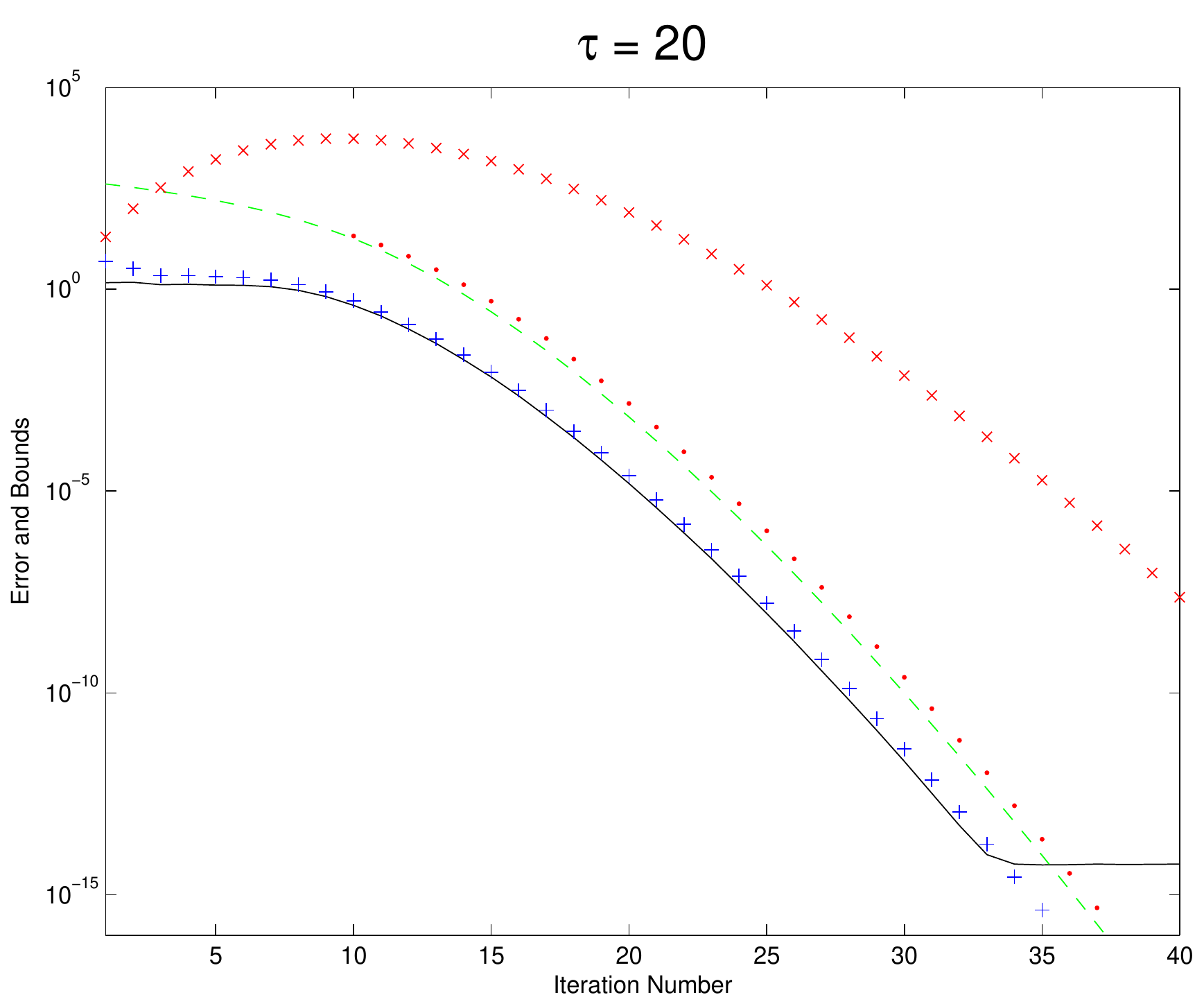}
  \includegraphics[scale=0.4]{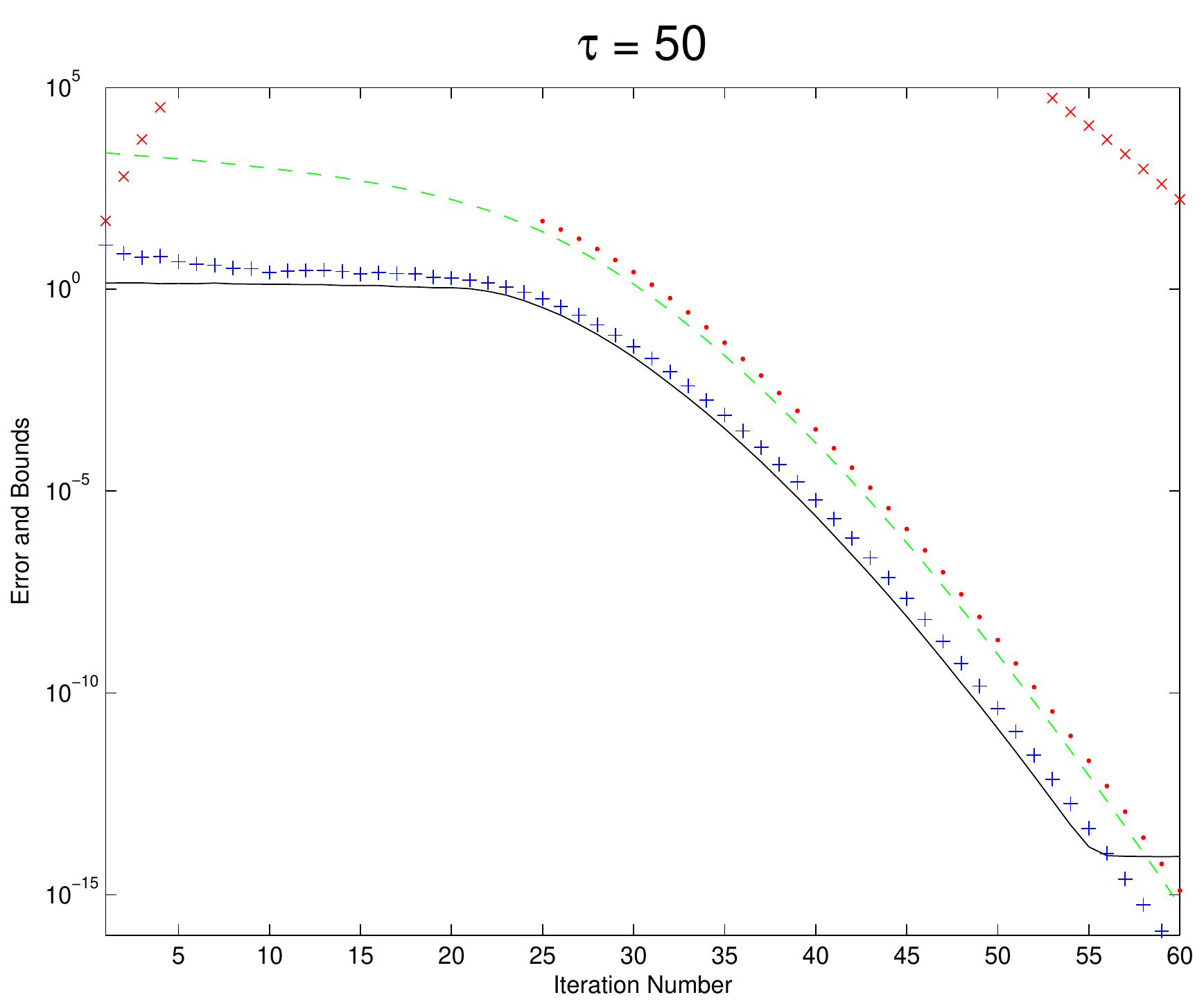}
  \caption
  {
    Example 4.
    $1000\times1000$ diagonal matrix with $a_{jj}=j/1000$.
    $\tau=2,10,20,50$.
    Error (solid), {\em a posteriori} bound (+), {\em a priori} bound (dashed), Hochbruch and Lubich's bound (dotted), and Saad's bound (x).
  }
  \label{fig:ex4}
\end{figure}

We first observe that our bound only  improves Hochbruch and Lubich's bound very slightly. It is significantly better than Saad's bound when $\tau $ is large. In all cases, our and Hochbruch and Lubich's bound follow  the actual error quite closely and our {\em a posteriori} error estimate is sharp. In addition, for larger $\tau$, the error typically stagnates first for some iterations before it starts to converge. An analysis of our bound has shown that the convergence may be expected to start at $k=2\tau \rho $. For $\tau=2,10,20,50$, the corresponding $k$ is 1, 5, 10 and 25, respectively. This basically matches the actual convergence curve in Figure \ref{fig:ex4}, especially when $\tau$ is relatively large and more iterations are needed for the convergence.


\section{Concluding remarks}
\label{sec:concluding_remarks}

For the computation of $e^{-\tau A}v$ with a non-Hermitian matrix $A$ by the Krylov subspace methods, we have presented an {\em a posteriori} error bound that provides a sharp estimate of the error. We have also derive new {\em a priori} error bounds based on the largest and the smallest eigenvalues of the Hermitian and the skew-Hermitian parts of $A$. Using this simple spectral information, our bounds capture convergence characteristics of the Krylov subspace methods. 
They also explain  often observed initial stagnation of the convergence curve.
Numerical comparisons with existing bounds also show that our new bounds may significantly improve the {\em a priori} bound by Hochbruch and Lubich \cite{ye15} that is based on a circular enclosing region of the field of values and the one by Saad \cite{ye24} that is based on the norm. Finally, it agrees with the bound \cite{ye} for the symmetric positive definite case.

The technique developed in this paper provides a new way to analyze convergence of the Krylov subspace method for non-Hermitian matrices through the bounding rectangle for the field of values. It may be extended to other linear algebra problems. For the future works, we plan to study convergence bounds for linear systems based on the Hermitian and the skew-Hermitian parts of $A$, which may also add to the theory of the Krylov subspace method for linear systems.


\bigskip

\noindent {\bf Acknowledgement:} We would like to thank Prof. Michele Benzi for many valuable discussions and in particular for his suggestion to use the technique in \cite{bebo14} that has turned out to be very fruitful.


\end{document}